\pgfplotsset{ticks=none}
\pgfplotsset{compat=1.18}
\newtheorem{theorem}{Theorem}[section]
\newtheorem{corollary}[theorem]{Corollary}
\newtheorem{proposition}[theorem]{Proposition}
\newtheorem{lemma}[theorem]{Lemma}
\numberwithin{equation}{section}
\theoremstyle{definition}
\newtheorem{definition}[theorem]{Definition}
 \newtheorem*{problem}{Open problem}
\newenvironment{example}
  {\pushQED{\qed}\examplex}
  {\popQED\endexamplex}
\theoremstyle{remark}
\newtheorem{remark}[theorem]{Remark}
\newtheorem{remarks}[theorem]{Remarks}
\newtheorem*{remark*}{Remark}
\newcommand{\R}{{\mathbb R}}
\newcommand{\Z}{{\mathbb Z}}
\newcommand{\N}{{\mathbb N}}
\newcommand{\bbO}{{\mathbb O}}
\newcommand{\ZP}{{\mathbb Z}_+}
\newcommand{\RP}{{\mathbb R}_+}
\newcommand{\Sp}[1]{{\mathbb S}^{#1}}
\DeclareMathOperator{\Exp}{\mathbb{E}}
\let\Pr\relax
\DeclareMathOperator{\Pr}{\mathbb{P}}
\DeclareMathOperator{\Var}{\mathbb{V}ar}
\DeclareMathOperator{\arcsinh}{arcsinh}
\DeclareMathOperator{\sd}{\triangle}
\DeclareMathOperator*{\capa}{\mathrm{cap}}
\newcommand{\tra}{{\scalebox{0.6}{$\top$}}}
\newcommand{\per}{{\mkern -1mu \scalebox{0.5}{$\perp$}}}
\newcommand{\eps}{\varepsilon}
\newcommand{\ud}{{\mathrm d}}
\newcommand{\cA}{{\mathcal A}}
\newcommand{\cB}{{\mathcal B}}
\newcommand{\cBp}{{\cB}_+}
\newcommand{\cC}{{\mathcal C}}
\newcommand{\cE}{{\mathcal E}}
\newcommand{\cF}{{\mathcal F}}
\newcommand{\cG}{{\mathcal G}}
\newcommand{\cH}{{\mathcal H}}
\newcommand{\cL}{{\mathcal L}}
\newcommand{\cN}{{\mathcal N}}
\newcommand{\fKdo}{{\mathfrak{K}_d^0}}
\newcommand{\fCdo}{{\mathfrak{C}_d^0}}
\newcommand{\Sigmap}{\Sigma_{\mu^\per}}
\newcommand{\fg}{{\mathfrak{g}}}
\newcommand{\seg}{{\mathfrak{s}}}
\newcommand{\tlambda}{{\widetilde \lambda}}
\newcommand{\cCd}{\cC_d}
\newcommand{\cAx}{\cA^{x}}
\newcommand{\cCdo}{\cC_d^0}
\newcommand{\hull}{\mathop \mathrm{hull}}
\newcommand{\cl}{\mathop \mathrm{cl}}
\newcommand{\diam}{\mathop \mathrm{diam}}
\newcommand{\as}{\ \text{a.s.}}
\newcommand{\obar}[1]{\mkern 1.5mu\overline{\mkern-1.5mu#1\mkern-1.5mu}\mkern 1.5mu}
\newcommand{\ubar}[1]{\underline{#1\mkern-4mu}\mkern4mu }
\newcommand{\of}{{\obar f}}
\newcommand{\uf}{{\ubar f}}
\newcommand{\og}{{\obar g}}
\newcommand{\ug}{{\ubar g}}
\newcommand{\hf}{{\hat f}}
\newcommand{\fs}{f^\mathrm{s}}
\newcommand{\fopt}{f^\star}
\newcommand{\0}{{\mathbf{0}}}
\def\namedlabel#1#2{\begingroup  
    (#2)%
    \def\@currentlabel{#2}%
    \phantomsection\label{#1}\endgroup
}
\newlist{myenumi}{enumerate}{10}
\setlist[myenumi]{leftmargin=0pt, labelindent=\parindent, listparindent=\parindent, labelwidth=0pt, itemindent=!, itemsep=1pt, parsep=4pt}
\newlist{thmenumi}{enumerate}{10}
\setlist[thmenumi]{leftmargin=0pt, labelindent=\parindent, listparindent=\parindent, labelwidth=0pt, itemindent=!}
\title[Iterated-logarithm laws for convex hulls of random walks with drift]{Iterated-logarithm laws for convex hulls \\ of random walks with drift}
\author[W.\ Cygan]{Wojciech Cygan$^{1,2}$}
\address{$^{1}$University of Wroc\l{}aw,
		Faculty of Mathematics and Computer Science\\
		Institute of Mathematics,
		pl.\ Grunwaldzki 2/4, 50--384 Wroc\l{}aw, Poland}
\address{$^{2}$Technische Universit\"{a}t Dresden,
		Faculty of Mathematics\\
		Institute of Mathematical Stochastics,
		Zellescher Weg 25, 01069 Dresden, Germany}
\email{wojciech.cygan@uwr.edu.pl}
\author[N.\ Sandri\'{c}]{Nikola Sandri\'{c}$^{3}$}
\address{$^{3}$Department of Mathematics\\University of Zagreb\\ Zagreb\\Croatia}
\email{nsandric@math.hr}
\author[S.\ \v{S}ebek]{Stjepan\ \v{S}ebek$^{4}$}
\address{$^{4}$Department of Applied Mathematics\\
	Faculty of Electrical Engineering and Computing\\
	University of Zagreb\\ 
 Zagreb\\ 
	Croatia}
\email{stjepan.sebek@fer.hr}
\author[A.\ Wade]{Andrew Wade$^{5}$}
\address{$^{5}$Department of Mathematical Sciences\\
	Durham University\\
	Durham\\ 
	UK}
\email{andrew.wade@durham.ac.uk}
\subjclass[2010]{
60G50\! (Primary),
60D05, 60F15, 60J65, 52A22\! (Secondary)}
\keywords{Random walk; convex hull; intrinsic volumes; Strassen's theorem; law of the iterated logarithm; zero--one law; shape theorem.}
\begin{document}

%\title{Iterated-logarithm laws for convex hulls of random walks with drift}
%\author{Wojciech Cygan \and Nikola Sandri\' c \and Stjepan \v Sebek \and Andrew Wade}
%
%\date{\today}

\begin{abstract}
 We establish laws of the iterated logarithm for intrinsic volumes of the convex hull
 of many-step, multidimensional random walks 
 whose increments have two moments and a non-zero drift.
 Analogous results in the case of zero drift, where the scaling is different, were obtained by Khoshnevisan. Our starting point is a version of Strassen's functional law of the iterated logarithm for random walks with drift. For the special case of the area of a planar random walk with drift, we compute explicitly the constant in the iterated-logarithm law by solving an isoperimetric problem reminiscent of the classical Dido problem. For general intrinsic volumes and dimensions, our proof exploits a novel zero--one law for functionals of convex hulls of walks with drift, of some independent interest. As another application of our approach, we obtain iterated-logarithm laws for intrinsic volumes of the convex hull of the centre of mass (running average) process associated to the random walk.
  \end{abstract}

\maketitle

%\medskip
%
%\noindent
%{\em Key words:}  Random walk; convex hull; intrinsic volumes; Strassen's theorem; law of the iterated logarithm; zero--one law; shape theorem.
%
%\medskip
%
%\noindent
%{\em AMS Subject Classification:} 60G50 (Primary) 60D05, 60F15, 60J65, 52A22 (Secondary)

\section{Introduction and main results}
\label{sec:intro}
 
Several fundamental aspects of the geometry of a stochastic process in Euclidean space 
are captured by its associated process of convex hulls, and so analysis of convex hulls of random processes
may be demanded by applications
of stochastic processes  in which geometry is important. For this reason, convex hulls of random walks and
diffusions, for example, have been studied motivated by models of animal movement in ecology, or by algorithms for set estimation.
We refer to~\cite{majumdar} for a survey of the state of the field around~2010;
milestones in the earlier work include~\cite{levy,sw,ss,chm,khoshnevisan}.

In the last decade or so, activity has increased significantly on several fronts; among many papers, we mention~\cite{av,css,bgm,klm,wx1,wx2,mcr,mcrw,kvz2,lmw,lhw}.
Several works consider the large-time asymptotics of random processes derived from geometrical functionals of the convex hull (such as volume, diameter, perimeter, and so on),
with results on expectation and variance asymptotics, laws of large numbers, distributional limits, and large deviations, for example.
In the present work, we consider \emph{iterated-logarithm} asymptotics, i.e., almost-sure quantification of the $\limsup$ growth rate
of quantities like the volume of the convex hull. Prior work here includes the deep 
contributions of L\'evy~\cite{levy} and Khoshnevisan~\cite{khoshnevisan}, but previous work considered only
the case where the walk has \emph{zero drift}.
The case with non-zero drift is, as is to be expected, quite different, and  that is our focus. We expect our approach could be adapted to the $\liminf$ growth rate, but we leave that for future work.

On a probability space $(\Omega, \cF, \Pr)$, let $Z, Z_1, Z_2, \ldots$ be a family of~i.i.d.~random variables in $\R^d$, $d \in \N := \{1,2,3,\ldots\}$, 
and let $S_n := \sum_{i=1}^n Z_i$ describe the associated multidimensional random walk in $\R^d$, started from $S_0 := \0$, the origin.
Let $\cH_n := \hull \{ S_0, S_1, \ldots, S_n \}$, where $\hull A$ denotes the convex hull of $A \subseteq \R^d$ (the smallest convex subset of $\R^d$ which contains~$A$).
Write $\| \, \cdot \, \|$ for the $d$-dimensional Euclidean norm. When vectors in $\R^d$ appear in formulas, they are to be interpreted as column vectors,
although, for typographical convenience, we sometimes write them as row vectors. 
Denote the unit sphere in $\R^d$ by~$\Sp{d-1} := \{ x \in \R^d : \| x \| = 1 \}$.
For $x \in \R^d \setminus \{ \0\}$ we set $\hat x := x / \| x\| \in \Sp{d-1}$;
we define $\hat \0 := \0$.
We will typically assume that the increments of the random walk
have finite second moments, and we will use the following associated notation; we write $\Exp$ for expectation under $\Pr$.

\begin{description}
\item
[\namedlabel{ass:moments}{M}]
Suppose that $\Exp ( \| Z \|^2 ) < \infty$, and denote the mean increment vector by
$\mu := \Exp Z$ (the drift) and the increment covariance matrix by
$\Sigma := \Exp ( ( Z -\mu) (Z - \mu)^\tra )$.
\end{description}

The goal of this paper is to establish laws of the iterated logarithm (LILs) for geometric functionals of $\cH_n$,
particularly in the case where $\mu \neq \0$. When $\mu = \0$, an elegant
result was provided by Khoshnevisan~\cite{khoshnevisan}. For example, when $d=2$ and~\eqref{ass:moments} holds with $\mu = \0$
and $\Sigma = I$ (identity),  Khoshnevisan established
that the area $A(\cH_n)$ satisfies
\begin{equation}
\label{eq:khoshnevisan-area-lil}
 \limsup_{n \to \infty}  \frac{A ( \cH_n )}{   n \log \log n } =  \frac{1}{\pi} , \as  ;
\end{equation}
 the analogue of this result for Brownian motion
had already been obtained by L\'evy~\cite{levy} (see Example~\ref{ex:volume-zero-drift} below for a result for volume of $\cH_n$ in general dimensions).
In fact, Khoshnevisan established~\eqref{eq:khoshnevisan-area-lil} under an inessential additional hypothesis, that coordinates of $Z$ are independent~\cite[p.~318]{khoshnevisan}, 
which we remove; see Theorem~\ref{thm:khoshnevisan} below.
If, still, $d=2$ and $\Sigma =I$, but we have $\mu \neq \0$,
we are in a new setting, and a different scaling is needed. A special case of our results (in Theorem~\ref{thm:constants} below) shows that, now
\begin{equation}
\label{eq:drift-area-lil}
 \limsup_{n \to \infty}  \frac{A ( \cH_n )}{   n^{3/2} \sqrt{ \log \log n }} =  \frac{ \| \mu \|}{\sqrt{6}} , \as  
\end{equation}

To allow us to present one further example, we define 
\begin{equation}
\label{eq:com-def}
G_0 := \0, \text{ and } G_n := \frac{1}{n} \sum_{i=1}^n S_i, \text{ for } n \in \N.
\end{equation}
The process $G_n$, $n\in \ZP$, is the \emph{centre of mass} process associated with the random walk.
Under assumption~\eqref{ass:moments}, $G_n$ satisfies law of large numbers and central limit theorem
asymptotics of the same order as $S_n$, but, locally, $G_n$ moves much more slowly,
which leads, for example, to the interesting fact that $G_n$ is compact-set transient when $\mu = \0$ and $d=2$; 
see~\cite{grill,lw,lmw} for these and other properties. 
Here we consider its convex hull, defined by $\cG_n := \hull \{ G_0, G_1, \ldots, G_n \}$.
By convexity, $G_n \in \cH_n$ and so $\cG_n \subseteq \cH_n$; see Figure~\ref{fig:hull-picture} for a simulation picture.
If  $d=2$, $\Sigma =I$, and $\mu \neq \0$,
another application of the ideas of the present paper (see Theorem~\ref{thm:com} below) shows that
\begin{equation}
\label{eq:centre-of-mass-area-lil}
 \limsup_{n \to \infty}  \frac{A ( \cG_n )}{   n^{3/2} \sqrt{ \log \log n }} =  \vartheta \| \mu \| , \as  ,
\end{equation}
where $\vartheta \in (0,\infty)$. It would be  interesting to evaluate $\vartheta$;
presently, we do not have a solution to the variational problem that
characterizes $\vartheta$, which seems to require some new ideas. 
In Proposition~\ref{prop:kappa-bound} in Appendix~\ref{sec:com} below, we show that $\vartheta \geq 0.090435$;
for comparison with~\eqref{eq:drift-area-lil}, note that $1/\sqrt{6} \approx 0.40825$.

\begin{problem}
Compute the constant~$\vartheta$ in~\eqref{eq:centre-of-mass-area-lil}.
\end{problem} 

\begin{figure}[!h]
\centering
\includegraphics[width=0.99\textwidth]{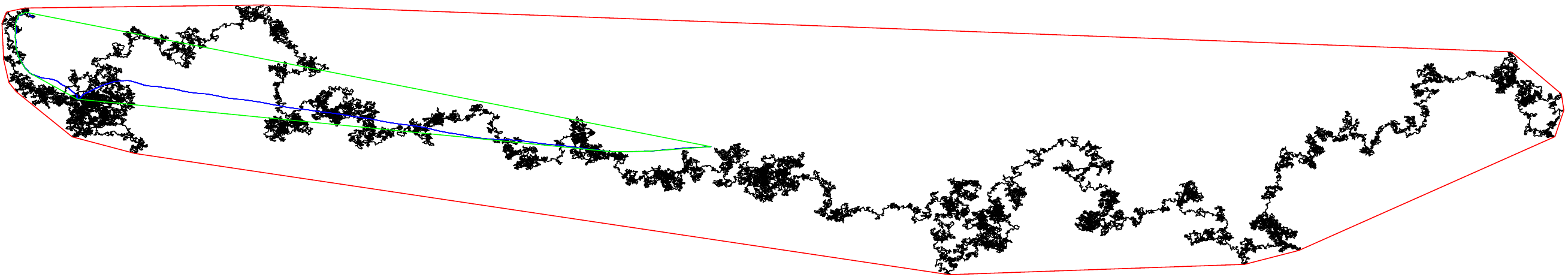}
\caption{Simulated trajectory of $5 \times 10^4$ steps of a planar random walk with non-zero drift, together with its convex hull (red).
Also plotted are the trajectory (blue) of the centre of mass process of the walk defined via~\eqref{eq:com-def},
 and the corresponding convex hull (green).}
\label{fig:hull-picture}
\end{figure}

Our main iterated-logarithm result for $\cH_n$ includes not only~\eqref{eq:drift-area-lil}, but results
for all dimensions~$d$ and all intrinsic volumes, 
and permits general $\Sigma$. In most cases, however, unlike~\eqref{eq:drift-area-lil}, we do not have an explicit value for the constant.
Our approach (like Khoshnevisan's) is founded on Strassen's functional law of the iterated logarithm,
modified appropriately to apply to walks with non-zero drift; in our setting, as in Khoshnevisan's, limiting constants can often
be characterized by variational problems, but in only a limited number of instances is the solution known.
We introduce some more notation in order to state our main results.

Let $C$ denote a non-empty, compact, convex subset of $\R^d$, and $V_d (C)$ its volume ($d$-dimensional Lebesgue measure).
If $C_\lambda$, $\lambda >0$, denotes the parallel body $C_\lambda := \{ x \in \R^d : \inf_{y \in C} \| x - y \| \leq \lambda \}$,
then the \emph{Steiner formula} of integral geometry~\cite[\S 4.1]{schneider} says that
\begin{equation}
\label{eq:steiner}
V_d ( C_\lambda ) = \sum_{k=0}^d \lambda^{d-k} \kappa_{d-k} V_k ( C), 
\end{equation}
where $\kappa_d := \pi^{d/2} / \Gamma ( 1+\frac{d}{2})$ is the $d$-dimensional volume of the unit-radius Euclidean ball in~$\R^d$. The quantities $V_k (C)$ on the right-hand side of~\eqref{eq:steiner}
are the \emph{intrinsic volumes} of~$C$. 
In particular, $V_0(C) \equiv 1$, and the 
intrinsic volumes $V_1(C)$ and $V_{d-1}(C)$ are proportional to the \emph{mean width} $w(C)$ and (Minkowski) \emph{surface area} $S(C)$, respectively: specifically, 
$V_1 (C) = \frac{d \kappa_d}{2\kappa_{d-1}} w(C)$ and $V_{d-1} = \frac{1}{2} S(C)$, see e.g.~\cite[p.~104]{gruber}.

Consider the random variables $V_k (\cH_n)$, $k \in \{1,\ldots,d\}$.
The strong law for the convex hull in the case with drift (see Theorem~3.3.3 of~\cite{mcr})
says that if $\Exp \| Z \| < \infty$, then
\begin{equation}
    \label{eq:hull-slln}
\lim_{n \to \infty} n^{-1} \cH_n = \hull \{ \0, \mu \} =: \seg_\mu, \as,
\end{equation}
where the convergence is in the metric space of non-empty compact, convex sets
with the Hausdorff distance.
The limit set $\seg_\mu$ is a line segment of length~$\|\mu\|$. As
can be seen from the Steiner formula~\eqref{eq:steiner} (see also~\cite[p.~7]{css}), for $\mu \neq \0$, the only non-trivial intrinsic volume of $\seg_\mu$ 
is $V_1 (\seg_\mu ) = \| \mu \|$; one has $V_k (\seg_\mu ) =0$ for $k \geq 2$. This means that from the strong law
one obtains the rather limited information that, a.s.,
\begin{equation}
\label{eq:lln-intrinsic-volumes}
 \lim_{n \to \infty} n^{-1} V_1 (\cH_n ) = \| \mu \|, \text{ and } \lim_{n \to \infty} n^{-k} V_k ( \cH_n ) = 0, \text{ for } k \geq 2 ,\end{equation}
where we have used the fact that $V_k$ is continuous and homogeneous of order~$k$ (see Definition~\ref{def:homogeneous} below).

Theorem~\ref{thm:intrinsic-volume-lil}, our first main result, gives more precise information about the a.s.~large-$n$ behaviour of the intrinsic volumes $V_k (\cH_n)$.
Recall that the random walk $S_n$ is \emph{genuinely $d$-dimensional} if $Z$ is not supported by any $(d-1)$-dimensional subspace~\cite[p.~72]{spitzerbook};
under assumption~\eqref{ass:moments}, this is equivalent to $\Sigma$ being (strictly) positive definite. Throughout the paper,
we write $\RP:=[0,\infty)$.

\begin{theorem}
\label{thm:intrinsic-volume-lil}
Suppose that~\eqref{ass:moments} holds with $\mu \neq \0$.
Let $k \in \{1,2,\ldots, d\}$.
Then there exists a constant $\Lambda (d,k,\cL_Z) \in \RP$, depending on $d$, $k$, and the law $\cL_Z$ of $Z$, such that
\begin{equation}
\label{eq:intrinsic-volume-lil}
 \limsup_{n \to \infty} \frac{V_k (\cH_n)}{\sqrt{2^{k-1} n^{k+1} (\log \log n)^{k-1} }} = \Lambda (d,k,\cL_Z), \as 
\end{equation}
Moreover, if $S_n$ is  genuinely $d$-dimensional, then $\Lambda (d,k,\cL_Z) >0$.
\end{theorem}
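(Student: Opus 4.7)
The plan is to combine a Strassen-type functional law of the iterated logarithm for random walks with nonzero drift with an asymptotic expansion of $V_k$ on long, thin convex bodies. Using isometry invariance of $V_k$, I may assume $\hat\mu = \be_1$ after an orthogonal change of coordinates, and decompose $S_n = (S_n^\parallel, S_n^\perp) \in \R \oplus \R^{d-1}$, with $\Sigma_\perp$ the perpendicular covariance block. Set $a_n := \sqrt{2 n \log \log n}$. By the LLN, $n^{-1} S_{\lfloor n\cdot\rfloor}^\parallel \to t \|\mu\|$ uniformly on $[0,1]$ almost surely; applying Strassen's FLIL to the centred transverse walk $S^\perp$ under~\eqref{ass:moments} shows that the polygonal paths
\[
\xi_n(t) := a_n^{-1} S_{\lfloor nt \rfloor}^\perp
\]
form almost surely a relatively compact family in $C([0,1]; \R^{d-1})$ whose set of limit points equals the Strassen class $\cK_{\Sigma_\perp}$ determined by~$\Sigma_\perp$ (an ellipsoid in path space).

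The central geometric step is an anisotropic expansion of $V_k$. Set $\epsilon_n := a_n/n \to 0$ and $T_n := \diag{n, a_n, \ldots, a_n} = n\cdot\diag{1, \epsilon_n, \ldots, \epsilon_n}$. The rescaled hull $\widetilde\cH_n := T_n^{-1}\cH_n$ is a bounded compact convex set, and $V_k(\cH_n) = n^k V_k\bigl(\diag{1, \epsilon_n, \ldots, \epsilon_n}\, \widetilde\cH_n\bigr)$ by homogeneity. The body inside collapses onto a $1$-dimensional segment as $\epsilon_n \to 0$, and a Steiner/Kubota-type expansion (via the product formula for intrinsic volumes together with Kubota projection formulas) yields, for any compact convex $C$ of the relevant shape,
\[
V_k\bigl( \diag{1,\epsilon,\ldots,\epsilon}\,C \bigr) \;=\; \epsilon^{k-1} \Psi_k(C) \;+\; O(\epsilon^k), \quad \epsilon \downarrow 0,
\]
where $\Psi_k$ is a continuous functional on compact convex subsets of $\R^d$ in the Hausdorff metric. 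Substituting, and noting $n\cdot a_n^{k-1} = \sqrt{2^{k-1} n^{k+1}(\log \log n)^{k-1}}$, I obtain
\[
\frac{V_k(\cH_n)}{\sqrt{2^{k-1} n^{k+1}(\log \log n)^{k-1}}} \;=\; \Psi_k(\widetilde\cH_n) + o(1).
\]

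Upper and lower bounds in~\eqref{eq:intrinsic-volume-lil} now follow from Strassen's FLIL combined with Hausdorff-continuity of $\Psi_k$: along subsequences realising any $g \in \cK_{\Sigma_\perp}$, $\widetilde\cH_n$ converges to $\hull\{(t\|\mu\|, g(t)) : t \in [0,1]\}$, so that almost surely
\[
\Lambda(d,k,\cL_Z) \;=\; \sup_{g \in \cK_{\Sigma_\perp}} \Psi_k\bigl(\hull\{(t\|\mu\|, g(t)) : t \in [0,1]\}\bigr),
\]
which is finite by compactness of $\cK_{\Sigma_\perp}$ in $C([0,1]; \R^{d-1})$. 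Almost-sure constancy of the limsup follows from a zero-one law for tail functionals of the convex hull of a drifting walk (of independent interest, as advertised in the abstract). Strict positivity $\Lambda > 0$ under genuine $d$-dimensionality (equivalently $\Sigma_\perp$ strictly positive definite) follows by exhibiting a path $g \in \cK_{\Sigma_\perp}$ whose image spans $\R^{d-1}$ — e.g.\ a small multiple of a smooth, nondegenerate $\R^{d-1}$-valued loop — so that the limit hull is full-dimensional and $\Psi_k > 0$.

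The principal obstacle is the anisotropic expansion itself: extracting the leading-order $\epsilon^{k-1}$ term of $V_k$ under non-homothetic scaling, with a remainder uniform over the Strassen-compact family of random rescaled hulls $\widetilde\cH_n$. This requires care with the integral geometry, extending~\eqref{eq:steiner} beyond homothetic dilations and establishing continuity of $\Psi_k$ at configurations where the transverse section of the limit hull is degenerate. Once these ingredients are in place, compactness of $\cK_{\Sigma_\perp}$ and standard Hausdorff continuity make the passage to the limsup routine, though the resulting variational characterisation of $\Lambda$ is, in general, not explicitly solvable — consistent with the paper's remark that closed-form constants are known only in special low-dimensional cases such as~\eqref{eq:drift-area-lil}.
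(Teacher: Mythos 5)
Your overall architecture --- the drift/transverse decomposition, a Strassen-type functional LIL for the transverse walk (the paper's Theorem~\ref{thm:strassen-drift-rw}), and a zero--one law for almost-sure constancy --- is the same as the paper's. The genuine gap is the step you yourself call ``the principal obstacle'': the anisotropic expansion
\[
V_k\bigl(\diag{1,\eps,\ldots,\eps}\,C\bigr)=\eps^{k-1}\Psi_k(C)+O(\eps^{k}),
\]
with $\Psi_k$ Hausdorff-continuous and the remainder uniform over the random family $\widetilde\cH_n$, is asserted but never proved, and it is precisely the difficulty the paper is structured to avoid. The Steiner formula~\eqref{eq:steiner} and Kubota's formula concern homothetic dilations and projections, not diagonal squeezings; identifying the leading coefficient and proving its continuity at the degenerate (segment) limit is not routine, and pointwise convergence at fixed $C$ would not suffice in any case, because $\widetilde\cH_n$ varies with $n$ and only subsequential limits are controlled by the FLIL. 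Tellingly, if your expansion held with the claimed continuity it would deliver a variational formula for $\Lambda(d,k,\cL_Z)$ for every $1<k<d$, which the authors explicitly state they do \emph{not} have. Your positivity argument inherits the same gap, since it needs to know that $\Psi_k>0$ on full-dimensional limit hulls.

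The paper's proof sidesteps the expansion entirely. From Theorem~\ref{thm:strassen-drift-rw} and the gradient bound~\eqref{eq:diameter-inequality} one gets, a.s., that $\cH_n$ is eventually contained in a box $R_d(-n,(2+\|\mu\|)n;r\ell(n))$ and infinitely often contains a box $R_d(\delta n,(1-2\delta)n;r\ell(n))$; the exact formula~\eqref{eq:rectangle-volumes} for intrinsic volumes of boxes then yields two-sided bounds of matching order as in~\eqref{eq:intrinsic-volume-lil-3}, in particular $V_k(\cH_n)\to\infty$ a.s. The zero--one law (Theorem~\ref{thm:zero-one}), applied after checking that $V_k$ is macroscopic in the sense of Definition~\ref{def:macroscopic} (via monotonicity and the valuation property, Remark~\ref{rem:macroscopic-2}), upgrades the bounded $\limsup$ to an a.s.\ constant without ever identifying it. Note also an internal tension in your plan: if the variational formula were established, a.s.\ constancy would already follow from the FLIL and the zero--one law would be redundant; in the paper the zero--one law is not a garnish but the essential substitute for the missing variational characterization. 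To repair your proof you must either rigorously establish the anisotropic expansion with the required uniformity (a stronger result than the paper proves) or fall back on the sandwich-plus-zero--one-law route.
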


When $d=1$, the result~\eqref{eq:intrinsic-volume-lil} is already contained in~\eqref{eq:lln-intrinsic-volumes},
so the main interest in Theorem~\ref{thm:intrinsic-volume-lil} is when $d \geq 2$ (see also Remarks~\ref{rems:intrinsic-volume-lil} below).
We do not have, in general,
a formula for the constants $\Lambda (d,k,\cL_Z)$ appearing in~\eqref{eq:intrinsic-volume-lil},
 but we do have a Strassen-type variational characterization
in the case $k=d$ (the volume), in which case $\Lambda(d,k,\cL_Z)$ depends on $\cL_Z$ only via $\mu$ and $\Sigma$,
  through an isoperimetric problem reminiscent of the classical Dido problem, for which we
can provide an explicit solution when $d=2$. To present these results
(Theorem~\ref{thm:constants} below) we need some additional notation.

The symmetric, non-negative definite covariance matrix $\Sigma$
defines a linear transformation of $\R^d$. If $d \geq 2$ and~$\mu \neq \0$ (non-zero drift), we denote by $\Sigmap$ 
the matrix of the linear transformation on~$\R^{d-1}$
induced by the action of $\Sigma$ on the orthogonal complement of the linear subspace generated by~$\mu$.
In other words, if $\Sigma$ is expressed in an orthonormal basis of $\R^d$ that includes $\hat \mu = \mu / \| \mu \|$ as a basis vector, the $(d-1)$-dimensional 
\emph{reduced covariance matrix} $\Sigmap$ is obtained
by deleting the row/column corresponding to $\hat \mu$; note that since $\Sigma = \Exp ( Z Z^\tra ) - \mu \mu^\tra$,
in the definition of $\Sigmap$ 
one could replace $\Sigma$ by $\Exp ( Z Z^\tra )$ and get the same reduced matrix.

For $f : [0,1] \to \R^{d-1}$, $d \geq 2$, define the \emph{space--time convex hull} of~$f$ by
\begin{equation}
\label{eq:H-def} H(f) := \hull \bigl\{ (t, f(t) ) : 0 \leq t \leq 1 \bigr\} \subset \R^d . \end{equation}
Let $U_{d-1}$, $d \geq 2$, denote the set of $f : [0,1] \to \R^{d-1}$, with $f(0) = \0$, 
whose Cartesian components $f_1, \ldots, f_{d-1}$ are absolutely continuous,
and satisfy $\sum_{i=1}^{d-1} \int_0^1 f'_i (s)^2 \ud s \leq 1$.

\begin{theorem}
\phantomsection
\label{thm:constants}
\begin{enumerate}[label=(\roman*)]
\item\label{thm:constants-i}
For $k=d \geq 2$, the  constant $\Lambda (d,k,\cL_Z)$ in~\eqref{eq:intrinsic-volume-lil} is given by 
\begin{equation}
\label{eq:volume-constant}
\Lambda (d,d,\cL_Z ) = \lambda_d \cdot \| \mu \| \cdot \sqrt{ \det \Sigmap }, \end{equation}
where the constants~$\lambda_d \in (0,\infty)$ are given through the variational formula 
\begin{equation}
\label{eq:gamma-def}
\lambda_d := \sup_{f \in U_{d-1}} V_d  (  H(f)  ) . \end{equation}
In the planar case, the constant in~\eqref{eq:gamma-def} takes value $\lambda_2 = \sqrt{3} / 6$.
\item\label{thm:constants-ii}
For $k=1$,  the  constant $\Lambda (d,k,\cL_Z)$ is given by $\Lambda (d,1,\cL_Z) = \| \mu\|$.
\end{enumerate}
\end{theorem}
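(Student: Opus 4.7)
I treat parts~(ii) and~(i) separately, starting with~(ii). For $k=1$, the normalization $\sqrt{2^{k-1} n^{k+1} (\log\log n)^{k-1}}$ reduces to~$n$, so the iterated-logarithm statement becomes a law of large numbers: by Theorem~3.3.3 of~\cite{mcr}, $n^{-1} \cH_n \to \seg_\mu$ a.s.\ in the Hausdorff metric on compact convex sets, and continuity together with degree-one homogeneity of~$V_1$ yield $n^{-1} V_1(\cH_n) = V_1(n^{-1} \cH_n) \to V_1(\seg_\mu) = \|\mu\|$ a.s.

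The proof of~(i) will rest on the variational description of~$\Lambda$ emerging from the proof of Theorem~\ref{thm:intrinsic-volume-lil}. Decomposing $S_n$ into its $\hat\mu$-component (dominated by the linear drift~$n\mu$, with fluctuations of order $\sqrt{n\log\log n}$) and its projection $S_n^\perp$ onto~$\mu^\perp$ (a mean-zero $(d-1)$-dimensional walk with covariance~$\Sigmap$), Strassen's functional LIL identifies the a.s.~limit set of the rescaled perpendicular paths as $\Sigmap^{1/2} U_{d-1}$. Rescaling the $\hat\mu$-axis by~$n$ and the perpendicular axes by $\sqrt{2n\log\log n}$, the rescaled hull converges along the relevant subsequences to $K_g := \hull\{(\|\mu\|\,t, g(t)) : t \in [0,1]\} \subset \R^d$ with $g = \Sigmap^{1/2} h$, $h \in U_{d-1}$. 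The affine map $(u, y) \mapsto (u/\|\mu\|, \Sigmap^{-1/2} y)$, with Jacobian determinant $1/(\|\mu\|\sqrt{\det \Sigmap})$, sends~$K_g$ to~$H(h)$, so $V_d(K_g) = \|\mu\| \sqrt{\det \Sigmap}\cdot V_d(H(h))$; supremizing over $h \in U_{d-1}$ gives~\eqref{eq:volume-constant} with $\lambda_d$ as in~\eqref{eq:gamma-def}.

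It remains to show $\lambda_2 = \sqrt{3}/6$. The lower bound uses the parabola $f(t) = \sqrt{3}\,t(1-t) \in U_1$, which is concave and non-negative with $f(0) = f(1) = 0$, so $H(f) = \{(t, y) : 0 \leq y \leq f(t)\}$ and $V_2(H(f)) = \int_0^1 f\, dt = \sqrt{3}/6$. For the upper bound, the vertical shear $f(t) \mapsto f(t) - t f(1)$ preserves $V_2(H(f))$ (being a volume-preserving affine map of~$\R^2$) and reduces $\int f'^2$ by $f(1)^2 \geq 0$, so we may assume $f(0) = f(1) = 0$. Writing $V_2(H(f)) = \int_0^1 w(t)\, dt$ for $w := f^* - f_*$, with $f^*, f_*$ the concave and convex envelopes of~$f$, the function~$w$ is non-negative, concave on $[0, 1]$, with $w(0) = w(1) = 0$. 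For any absolutely continuous~$g$ with $g(0) = g(1) = 0$, integration by parts and Cauchy--Schwarz yield
\[
\int_0^1 g(t)\, dt \;=\; \frac{1}{2} \int_0^1 (1 - 2t)\, g'(t)\, dt \;\leq\; \frac{1}{2\sqrt{3}} \biggl(\int_0^1 g'(t)^2\, dt\biggr)^{1/2},
\]
using $\int_0^1 (1 - 2t)^2\, dt = 1/3$. Applied to $g = w$, this reduces the upper bound to the energy inequality $\int_0^1 w'(t)^2\, dt \leq \int_0^1 f'(t)^2\, dt$.

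Proving this energy inequality is the main technical obstacle. My plan is to partition $[0, 1]$ according to the contact sets $\{f = f^*\}$ and $\{f = f_*\}$: on each maximal open interval of the complement of $\{f = f^*\}$, the envelope~$f^*$ is affine with slope equal to the mean value of~$f'$ over that interval, giving $\int (f^*)'^2 \leq \int f'^2$ locally by Cauchy--Schwarz, and symmetrically for~$f_*$. The quantity $(w')^2 = ((f^*)')^2 - 2(f^*)'(f_*)' + ((f_*)')^2$ then has to be controlled against $(f')^2$ by careful handling of the cross-term $-2 (f^*)'(f_*)'$, distinguishing the four cases in which both, one, or neither envelope touches~$f$ (the case where both touch forces $f$ to be locally linear, so $w'=0$; in the mixed cases one envelope's derivative agrees with $f'$ and the other is a local affine slope obtained as an average of $f'$). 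Organising this bookkeeping -- so that the cross-term contributions telescope favourably on each gap of the contact sets -- is the delicate step.
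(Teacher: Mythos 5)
Parts~(ii), the derivation of the variational formula~\eqref{eq:volume-constant}--\eqref{eq:gamma-def}, the reduction to bridges via the shear $f \mapsto f - tf(1)$, and the lower bound via the parabola all match the paper's argument and are correct. Where you diverge is in the upper bound for $\lambda_2$: the paper does \emph{not} prove an energy inequality for $w := \of - \uf$. Instead it replaces $f$ by one of two symmetrized functions $f^{\mathrm{s}\pm}$ obtained by reflecting $f$ across each face of its concave majorant (resp.\ convex minorant); these have the \emph{same} Dirichlet energy as $f$, are non-negative bridges, and at least one of them has area functional $\geq A_{0,1}(f)$, after which one concavifies and applies the Cauchy--Schwarz step to that function rather than to $w$. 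Your single-shot Cauchy--Schwarz estimate $\int_0^1 w \leq \tfrac{1}{2\sqrt3}\|w'\|_2$ is fine (and is essentially the paper's Lemma~5.10), but it shifts the entire burden onto the inequality $\int_0^1 (w')^2 \leq \int_0^1 (f')^2$, which you explicitly leave unproved. As written, this is a genuine gap: the sketched case analysis of the cross term $-2\int (\of)'(\uf)'$ is not carried out, and note that this cross term has the \emph{wrong} sign for you (since $(\of)'$ is decreasing and $(\uf)'$ is increasing and both integrate to $0$, Chebyshev's sum inequality gives $\int (\of)'(\uf)' \leq 0$), so it genuinely competes against the savings $\int ((\of)')^2 \leq \int (f')^2$ and $\int ((\uf)')^2 \leq \int (f')^2$; a naive term-by-term bookkeeping will not close it.

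The good news is that your energy inequality is true and admits a clean proof that bypasses the bookkeeping entirely: on each face $(u_k^+, v_k^+)$ of $\cF^+$ one has $(\of)' \equiv \frac{1}{v_k^+-u_k^+}\int_{u_k^+}^{v_k^+} f'$, while $(\of)' = f'$ a.e.\ on $\cE^+$, so $(\of)' = Pf'$ where $P$ is the conditional expectation (orthogonal projection) in $L^2[0,1]$ onto the $\sigma$-algebra generated by the faces of $\of$ and the Borel subsets of $\cE^+$; similarly $(\uf)' = Qf'$ for another orthogonal projection $Q$. For any two orthogonal projections, $\|(P-Q)g\|_2^2 = \|P(I-Q)g\|_2^2 + \|(I-P)Qg\|_2^2 \leq \|(I-Q)g\|_2^2 + \|Qg\|_2^2 = \|g\|_2^2$, whence $\|w'\|_2 = \|(P-Q)f'\|_2 \leq \|f'\|_2$. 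With this lemma inserted, your route is complete and is arguably shorter than the paper's symmetrization argument; without it, the proof of $\lambda_2 = \sqrt3/6$ is incomplete at its decisive step. (You should also record the equality analysis if uniqueness of the optimizer is wanted, and note that the concavity of $w$ you invoke is not actually needed for the Cauchy--Schwarz step --- only $w(0)=w(1)=0$ and absolute continuity are used.)
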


\begin{remarks}
	\phantomsection
\label{rems:intrinsic-volume-lil}
\begin{myenumi}[label=(\alph*)]
\item 
If $S_n$ is genuinely $d$-dimensional, then $\det \Sigmap >0$.
\item 
\label{rems:intrinsic-volume-lil-a}
In the case $k=d \geq 2$, the statement~\eqref{eq:intrinsic-volume-lil} 
combined with~\eqref{eq:volume-constant} shows that the volume of the convex hull satisfies the LIL
\begin{equation}
\label{eq:volume-lil}
 \limsup_{n \to \infty} \frac{V_d (\cH_n)}{\sqrt{2^{d-1} n^{d+1} (\log \log n)^{d-1} }} = \lambda_d \cdot \| \mu \| \cdot \sqrt{ \det \Sigmap },\end{equation}
where $\lambda_d$ is given by~\eqref{eq:gamma-def}. When $d=2$ and $\Sigma = I$, this plus the fact that $\lambda_2 = \sqrt{3}/6$ 
gives our LIL for the area quoted at~\eqref{eq:drift-area-lil} above.
\item
\label{rems:intrinsic-volume-lil-b}
 We do not, in general, have the solution to the variational problem~\eqref{eq:gamma-def}
for $d \geq 3$. The claim in Theorem~\ref{thm:constants} that $\lambda_2 = \sqrt{3} / 6$ is established by solving the $d=2$ variational problem in~\eqref{eq:gamma-def}. The solution is presented in Theorem~\ref{thm:planar-optimum} below.
\item
\label{rems:intrinsic-volume-lil-bb}
Generically, LILs are closely linked to large deviations;  recent
results on large deviations for 
planar random walks 
can be found in~\cite{av,vysotsky}. In particular, in~\cite{av,vysotsky}
the large-deviations rate function
for a large class of planar random walks with Gaussian increments
are computed, and these results provide an alternative route to extracting the constant~$\lambda_2 = \sqrt{3} / 6$: see Remark~\ref{rem:vv} below.
\item
\label{rems:intrinsic-volume-lil-c} 
In the case $k=1$, the strong law in~\eqref{eq:lln-intrinsic-volumes}
demonstrates that the `$\limsup$' in~\eqref{eq:intrinsic-volume-lil} is in fact a limit (with the constant identified in Theorem~\ref{thm:constants}\ref{thm:constants-ii}).
\end{myenumi}
\end{remarks}

Here we give a brief overview of the remaining part of the article. 
We obtain our LIL for intrinsic volumes, Theorem~\ref{thm:intrinsic-volume-lil},
from a functional law of the iterated logarithm for convex hulls of random walks with drift, in the vein of (and proved using) Strassen's celebrated
functional law of the iterated logarithm~\cite{strassen}. Our result is a companion to a LIL of Khoshnevisan~\cite{khoshnevisan}
which applies to functionals of convex hulls of \emph{zero-drift} random walks.  
In Section~\ref{sec:strassen} we  review the Strassen-type theorem that we will need (Theorem~\ref{thm:strassen-rw})
 and present  a small extension of Khoshnevisan's result (Theorem~\ref{thm:khoshnevisan}) to permit general $\Sigma$.
In Section~\ref{sec:strassen-drift} we present analogues of the Strassen and Khoshnevisan results in the case of non-zero drift,
and give the proof of Theorem~\ref{thm:constants}, up to the evaluation of the constant~$\lambda_2$. 
The classical normalization in the LIL is the Khinchin scaling function $\sqrt{2n \log \log n}$;
since our walks have non-zero drift,  we  instead scale the drift direction linearly (order $n$), and the other coordinates with the Khinchin scaling: see~\eqref{eq:psi-def} below.
The proof of Theorem~\ref{thm:intrinsic-volume-lil} needs some additional
work to overcome the fact that most intrinsic volumes do not scale simply under our non-isotropic scaling transformation:
the main additional ingredient is a zero--one law for functionals of convex hulls of random walks with non-zero drift.
This zero--one law is the subject  of Section~\ref{sec:zero-one}, which also contains the proof of Theorem~\ref{thm:intrinsic-volume-lil}.
Section~\ref{sec:planar} is devoted to the evaluation of the constant $\lambda_2$ from Theorem~\ref{thm:constants},
by solving a planar isoperimetric problem similar to the classical Dido problem.

\section{Iterated-logarithm laws of Strassen and Khoshnevisan}
\label{sec:strassen}

In this section we  review  Strassen's functional LIL, first
in the case of $d$-dimensional Brownian motion (Theorem~\ref{thm:strassen-bm}) and then
for zero-drift random walk (Theorem~\ref{thm:strassen-rw}). We then present  (a slight generalization of) a theorem of  Khoshnevisan
of functionals of convex hulls of random walks (Theorem~\ref{thm:khoshnevisan}).
As a first application, we deduce a shape theorem extending one from~\cite{mcrw} (Corollary~\ref{cor:shape}).
It is worth pointing out that the LILs of Strassen and Khoshnevisan were both partially anticipated
 by a spectacular~1955 paper of L\'evy~\cite{levy}.

Let $\cCd$ denote the set of continuous $f : [0,1] \to \R^d$, and let $\cCdo$ denote the subset of those $f \in \cCd$ for which $f(0)=\0$.
Endowed with the uniform (supremum) metric $\rho_\infty$ defined by $\rho_\infty (f,g) := \sup_{0 \leq t \leq 1} \| f(t) - g(t) \|$, $\cCd$ is a complete metric space.
In components, write $f = (f_1, \ldots, f_d) \in \cCd$, so $f_i \in \cC_1$. If $f_i$ is absolutely continuous, then its derivative $f_i'$ exists a.e. 
If all components of $f$ are absolutely continuous, we say that $f$ is absolutely continuous;
then the (componentwise) derivative $f' := (f_1', \ldots, f_d')$ exists a.e.

For $f \in \cCd$, set
\[ \| f \|^2_2 := \int_0^1 \| f (s) \|^2 \ud s
= \  \sum_{i=1}^d \int_0^1  f_i (s) ^2 \ud s 
 ,\]
which  defines the $L^2$ norm~$\| \, \cdot \, \|_2$. 
Let $\cA_d$ denote the set of absolutely continuous $f \in \cCdo$
with~$\| f' \|_2 < \infty$.
Then $\cA_d$ is a Hilbert space (the Cameron--Martin space for the Wiener measure~\cite[p.~339]{ry})
with inner product $\langle f, g \rangle_{\cA_d} := \sum_{i=1}^d \int_0^1 f_i' (s) g_i'(s) \ud s$,
and norm 
\begin{equation}
\label{eq:c-m-norm}
 \| f \|_{\cA_d} := \| f' \|_2  = \left( \int_0^1 \| f' (s) \|^2 \ud s \right)^{1/2} . \end{equation}
 The unit ball 
\begin{equation}
\label{eq:unit-ball} U_d := \{ f \in \cA_d : \| f \|_{\cA_d} \leq 1 \} \subset \cCdo \end{equation}
is compact; see~\cite[\S VIII.2]{ry}. 
Let $B \in \cCdo$ be a standard $d$-dimensional Brownian motion.
The Khinchin scaling function for the  classical LIL is
\begin{equation}
\label{eq:ell-def}
 \ell (n) := 1 \text{ for } n \in \{0,1,2\}, \text{ and } \ell (n) := \sqrt{ 2 n \log \log n } \text{ for } n \geq 3 .\end{equation}
For each $n \in \N$, set
\[ B^\star_n(t) := \frac{B (nt)}{\ell (n)} , \text{ for } t \in [0,1] .\]
Strassen's theorem for $d$-dimensional Brownian motion (cf.~\cite{strassen}) states the following.

 \begin{theorem}[Strassen's LIL for Brownian motion]
\label{thm:strassen-bm}
Let $d \in \N$. 
A.s., the sequence $B^\star_n$ in $\cCdo$ is relatively compact, and   its set of limit points is~$U_d$ as defined at~\eqref{eq:unit-ball}.
\end{theorem}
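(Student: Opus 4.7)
The plan is to follow the classical approach to Strassen's functional law, splitting the statement into an \emph{upper half} (every subsequential limit of $(B_n^\star)$ lies a.s.\ in $U_d$) and a \emph{lower half} (every element of $U_d$ is a.s.\ a subsequential limit). Brownian scaling lets us identify $B_n^\star$ in distribution with $(2 \log \log n)^{-1/2} \tilde B$ for another standard $d$-dimensional Brownian motion $\tilde B$ on $[0,1]$, so both halves reduce to Schilder-type small-noise large deviation estimates for $\sigma \tilde B$ as $\sigma \to 0$, with rate function $I(f) = \tfrac{1}{2}\|f\|_{\cA_d}^2$ for $f \in \cA_d$ and $+\infty$ otherwise.

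For the upper half, fix an open neighbourhood $\cO \supset U_d$ in $(\cCdo, \rho_\infty)$ and a geometric subsequence $n_k := \lfloor \alpha^k \rfloor$ with $\alpha > 1$. Since $U_d$ is compact and $\cCdo \setminus \cO$ is closed and disjoint from $U_d$, Schilder's upper bound gives $\Pr(B_{n_k}^\star \notin \cO) \leq \exp(-(1+\eta)\log\log n_k)$ for some $\eta>0$ and all large $k$; these probabilities are summable in $k$, so the first Borel--Cantelli lemma yields $B_{n_k}^\star \in \cO$ eventually, a.s. To interpolate to every $n \in [n_k, n_{k+1}]$, I would combine this with a maximal estimate for Brownian fluctuations showing that, for $\alpha$ close to~$1$, $\sup_{n_k \le n \le n_{k+1}} \rho_\infty(B_n^\star, B_{n_k}^\star)$ tends to zero a.s.\ as $k \to \infty$. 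Running the argument over a countable base of open $\cO \supset U_d$ then yields both relative compactness of $(B_n^\star)$ and containment of all limit points in $U_d$.

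For the lower half, fix $f \in U_d$ with $\|f\|_{\cA_d} < 1$; such $f$ are dense in $U_d$ under $\rho_\infty$. Along $n_k := \lfloor \alpha^k \rfloor$ (now with $\alpha$ chosen large), introduce the independent block processes $\Delta_k(t) := \ell(n_{k+1})^{-1} (B(n_k + (n_{k+1}-n_k)t) - B(n_k))$, $t \in [0,1]$. Schilder's lower bound delivers $\Pr(\rho_\infty(\Delta_k, f) < \eps) \geq \exp(-(1-\eta)\log\log n_{k+1})$ for $\eps$ small and $\eta > 0$ small, and the divergent sum $\sum_k (\log n_{k+1})^{-(1-\eta)}$ together with independence across $k$ feeds into the second Borel--Cantelli lemma to give $\rho_\infty(\Delta_k, f) < \eps$ infinitely often a.s. Writing $B_{n_{k+1}}^\star(t)$ in terms of the time-rescaled $\Delta_k$ plus a residual controlled by $\|B(n_k)\|/\ell(n_{k+1})$, the already-proved upper half shows the residual is of order $\sqrt{1/\alpha}$ uniformly in $k$, so for $\alpha$ large one obtains $\rho_\infty(B_{n_{k+1}}^\star, f) < 2\eps$ infinitely often a.s.; a density/approximation argument then covers every $f \in U_d$.

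The main obstacles are the two-sided Schilder-type large deviation estimates and the need to convert subsequential statements into statements valid for every $n$: the upper half requires a modulus-of-continuity estimate strong enough to survive the exponential gap produced by the factor $1+\eta$, while the lower half requires simultaneously choosing $\alpha$ large enough to kill the residual $B(n_k)/\ell(n_{k+1})$ and $\eta$ small enough to preserve the divergence of the Borel--Cantelli series after approximating $f$ by block-wise linear paths. The passage from $d=1$ to general $d$ is essentially free, since the coordinates of $B$ are independent and the Cameron--Martin norm in~\eqref{eq:c-m-norm} decouples additively across them.
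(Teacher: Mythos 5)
Your sketch is the standard Schilder-based proof of Strassen's functional LIL (upper half by the large-deviation upper bound plus Borel--Cantelli along a geometric subsequence and an interpolation/modulus estimate; lower half by the lower bound applied to independent block increments, the second Borel--Cantelli lemma, and control of the residual $B(n_k)/\ell(n_{k+1})$), and it is correct in outline. The paper does not prove Theorem~\ref{thm:strassen-bm} itself but cites exactly this classical argument (e.g.\ Deuschel--Stroock, Revuz--Yor, Schilling), so your proposal matches the intended proof.
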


In other words, Theorem~\ref{thm:strassen-bm} states that, a.s., (a)~every subsequence of $B^\star_n$ contains a further subsequence that
converges, its limit being some $f \in U_d$, and~(b) for every $f \in U_d$, there is a subsequence of $B^\star_n$ that converges to~$f$.
For instructive proofs of Theorem~\ref{thm:strassen-bm},
see~\cite[pp.~225--230]{schilling} (for $d=1$) or \cite[pp.~346--348]{ry}, \cite[pp.~21--24]{ds} (for general~$d$).

To state the random-walk version of Strassen's LIL,  for $n\in\ZP$, define
 linear interpolation of the random walk trajectory, parametrized from time $t=0$ to $t=1$, by
\begin{equation}
\label{eq:Y-def}
 Y_n (t) := S_{\lfloor nt \rfloor} + (nt-\lfloor nt \rfloor) Z_{\lfloor nt \rfloor +1}, \text{ for } t \in [0,1].\end{equation}
Then $Y_n \in \cCdo$ for every $n \in \ZP$. 
Note also that $\hull Y_n [0,1] = \cH_n$, since the convex hull of $Y_n [0,1]$
is determined only by $Y_n (k/n)$, $k \in \{0,1,\ldots, n\}$ (linear interpolation  does not affect the convex hull).

The symmetric, non-negative definite matrix $\Sigma$ defined in~\eqref{ass:moments} has a unique symmetric, non-negative definite
square-root $\Sigma^{1/2}$, such that $\Sigma^{1/2} \Sigma^{1/2} = \Sigma$. The matrix $\Sigma^{1/2}$ acts as a linear transformation of $\R^d$
via $x \mapsto \Sigma^{1/2} x$, $x \in \R^d$, and, for $f \in \cCdo$, the function $\Sigma^{1/2} f \in \cCdo$ is given by $(\Sigma^{1/2} f)(t) = \Sigma^{1/2} f(t)$, $t \in [0,1]$.
If~\eqref{ass:moments} holds with $\mu = \0$, then
Donsker's theorem~\cite[p.~393]{stroock} says that $n^{-1/2} Y_n$ converges weakly to $\Sigma^{1/2} B$ as $n \to \infty$.
Strassen's theorem for  $d$-dimensional  random walk (cf.~\cite{strassen}) is as follows.

 \begin{theorem}[Strassen's LIL for random walk]
\label{thm:strassen-rw}
Let $d \in \N$. 
Suppose that~\eqref{ass:moments} holds with $\mu = \0$. 
A.s., the sequence $Y_n / \ell (n)$ in $\cCdo$ is relatively compact, and   its set of limit points is~$\Sigma^{1/2} U_d$.
\end{theorem}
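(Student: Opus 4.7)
The plan is to reduce Theorem~\ref{thm:strassen-rw} to the Brownian case, Theorem~\ref{thm:strassen-bm}, via a strong invariance principle. Set $\tilde{B} := \Sigma^{1/2} B$, which is a $d$-dimensional Brownian motion with covariance matrix $\Sigma$. The linear map $\Sigma^{1/2} : \cCdo \to \cCdo$ is continuous for $\rho_\infty$, and from the identity $\tilde{B}(n\cdot)/\ell(n) = \Sigma^{1/2}( B^\star_n )$, Theorem~\ref{thm:strassen-bm} immediately gives that, a.s., the sequence $\tilde{B}(n\cdot)/\ell(n)$ is relatively compact in $\cCdo$ with set of limit points exactly $\Sigma^{1/2} U_d$. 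The remaining task is therefore to show that $Y_n/\ell(n)$ and $\tilde{B}(n\cdot)/\ell(n)$ share limit sets, and relative compactness, almost surely.

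The key analytic input is a strong invariance principle: under~\eqref{ass:moments} with $\mu = \0$, on a suitable enlargement of the probability space there exists a version of $\tilde{B}$ coupled to $(S_k)_{k \in \ZP}$ such that
\begin{equation}
\label{eq:proposal-strong-inv}
 \max_{0 \leq k \leq n} \bigl\| S_k - \tilde{B}(k) \bigr\| = o( \ell(n) ) \as
\end{equation}
For $d=1$ this is Strassen's theorem~\cite{strassen}, obtained via Skorokhod embedding $S_k = \tilde{B}(\tau_k)$ together with the Hartman--Wintner LIL applied to $\tau_n - n$; the extension to $d \geq 2$ under the two-moment assumption~\eqref{ass:moments} is classical, either by embedding componentwise in an orthonormal basis of $\R^d$ diagonalizing $\Sigma$ and coupling the one-dimensional embeddings carefully, or by appeal to an off-the-shelf multidimensional strong invariance principle. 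To pass from the lattice bound~\eqref{eq:proposal-strong-inv} to the continuous-time analogue
\begin{equation}
\label{eq:proposal-sup-inv}
 \sup_{0 \leq t \leq 1} \bigl\| Y_n(t) - \tilde{B}(nt) \bigr\| = o( \ell(n) ) \as
\end{equation}
one bounds the interpolation gap $\| Y_n(t) - S_{\lfloor nt \rfloor} \|$ by $\max_{1 \leq i \leq n+1} \| Z_i \| = o(\sqrt{n})$ a.s.\ (a Borel--Cantelli argument from $\Exp \| Z \|^2 < \infty$), and bounds the oscillation of $\tilde{B}$ over unit sub-intervals of $[0,n]$ by $O(\sqrt{\log n})$ a.s.; both are $o(\ell(n))$. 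With~\eqref{eq:proposal-sup-inv}, the sequences $Y_n/\ell(n)$ and $\tilde{B}(n\cdot)/\ell(n)$ differ uniformly by $o(1)$ in $(\cCdo, \rho_\infty)$, and therefore share the status of relative compactness and the set of limit points, which by the first paragraph equals $\Sigma^{1/2} U_d$.

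The main obstacle is the multidimensional strong approximation~\eqref{eq:proposal-strong-inv} at the sharp rate $o(\ell(n))$ under the minimal moment assumption~\eqref{ass:moments}. In one dimension Skorokhod embedding plus the LIL for the embedding times does the job cleanly; in higher dimensions a direct Skorokhod-type construction is more delicate because of coordinate dependence, but several established routes are available. Once~\eqref{eq:proposal-strong-inv} is in hand, the remainder is a soft transfer argument: the identity $\tilde{B}(n\cdot)/\ell(n) = \Sigma^{1/2}(B^\star_n)$ and continuity of~$\Sigma^{1/2}$ on $\cCdo$ reduce everything to Theorem~\ref{thm:strassen-bm}.
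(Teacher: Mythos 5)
Your proposal is correct and follows essentially the route the paper itself indicates for Theorem~\ref{thm:strassen-rw}: a strong approximation of $S_k$ by $\Sigma^{1/2}B(k)$ at rate $o(\ell(n))$ (Skorokhod embedding for $d=1$; the results of Philipp or Einmahl for $d\geq 2$, which hold under~\eqref{ass:moments} alone), followed by a soft transfer from Theorem~\ref{thm:strassen-bm} using continuity of $\Sigma^{1/2}$ and the $o(\ell(n))$ uniform closeness of the interpolated walk to the coupled Brownian path. One small caveat: in the $d=1$ aside, the Hartman--Wintner LIL for $\tau_n - n\sigma^2$ would require $\Exp(\tau_1^2)<\infty$ (i.e.\ four moments of $Z$); under two moments the classical argument instead uses the strong law $\tau_n = n\sigma^2 + o(n)$ together with a large-increment estimate for Brownian motion, which still yields the $o(\ell(n))$ rate.
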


The usual proof of the $d=1$ case of Theorem~\ref{thm:strassen-rw} goes by Skorokhod embedding and
using the Brownian result, Theorem~\ref{thm:strassen-bm} (cf.~\cite[\S 3.5]{stout}). For $d \geq 2$, one substitutes 
a more sophisticated strong approximation argument, such as~\cite{philipp} or~\cite[Thm.~2]{einmahl}; Theorem~\ref{thm:strassen-rw}
is also a consequence of more general results on Hilbert or Banach spaces~\cite[Ch.~8]{lt}.
 
Define the Euclidean distance between points  $x, y \in \R^d$ by $\rho_E (x,y) := \| x - y \|$,
and between a point $x \in \R^d$ and a set $A \subseteq \R^d$ by $\rho_E(x,A) := \inf_{y \in A} \rho_E(x,y)$.
Let $\fKdo$ denote the set of compact subsets of $\R^d$ containing $\0$, endowed with the Hausdorff metric
defined by 
$\rho_H(C_1,C_2) :=  \max  \{ \sup_{x\in C_1}\rho_E(x,C_2),\sup_{y\in C_2}\rho_E (y,C_1) \}$.
By $\fCdo \subset \fKdo$ we denote the set of all~$C \in \fKdo$ that are convex.
If $f \in \cCdo$, then the trajectory $f[0,1]$ (interval image) is in~$\fKdo$,
and $\hull f[0,1]$ is in~$\fCdo$ (cf.~\cite[p.~44]{gruber}).
The following terminology is standard.

\begin{definition}[Homogeneous function]
\label{def:homogeneous}
The function~$F:  \fKdo  \to \R$ is \emph{homogeneous} of order~$\beta \in \RP$ if $F ( \lambda A ) = \lambda^\beta F(A)$ for all $A \in \fKdo$ and all scalar $\lambda \in (0,\infty)$.
\end{definition}

The following geometrical consequence of Theorem~\ref{thm:strassen-rw}
is essentially Proposition~3.2 of Khoshnevisan~\cite{khoshnevisan}, although 
we relax the assumptions in~\cite{khoshnevisan} to permit arbitrary $\Sigma$;
see also~\cite[pp.~465--6]{kl}. 

\begin{theorem}[Khoshnevisan]
\label{thm:khoshnevisan}
Suppose that~\eqref{ass:moments} holds with $\mu = \0$.
Let~$F: (\fKdo, \rho_H) \to (\R, \rho_E)$ be continuous and homogeneous of order~$\beta \in \RP$ (see Definition~\ref{def:homogeneous}).
Then
\[ \limsup_{n \to \infty} \frac{ F ( \cH_n) }{\ell (n)^\beta} = \sup_{f \in U_d} F \left( \Sigma^{1/2} \hull f[0,1] \right), \as \]
\end{theorem}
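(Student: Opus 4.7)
The plan is to reduce the theorem to Strassen's functional LIL (Theorem~\ref{thm:strassen-rw}) via a continuous-functional argument. Since $\cH_n = \hull Y_n[0,1]$ (the observation following~\eqref{eq:Y-def}) and $F$ is homogeneous of degree~$\beta$,
$$\frac{F(\cH_n)}{\ell(n)^\beta} = F\!\left(\frac{\cH_n}{\ell(n)}\right) = F\bigl(\hull (Y_n/\ell(n))[0,1]\bigr),$$
using that the convex hull commutes with scalar multiplication. Defining $\Phi : \cCdo \to \R$ by $\Phi(f) := F(\hull f[0,1])$, the quantity of interest equals $\Phi(Y_n/\ell(n))$.

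The first key step is to verify that $\Phi$ is continuous from $(\cCdo, \rho_\infty)$ to~$\R$. I would factor $\Phi$ as $f \mapsto f[0,1] \mapsto \hull f[0,1] \mapsto F(\hull f[0,1])$: the trajectory map $f \mapsto f[0,1]$ is $1$-Lipschitz from $(\cCdo,\rho_\infty)$ to $(\fKdo, \rho_H)$; the hull map $A \mapsto \hull A$ is $1$-Lipschitz on $(\fKdo, \rho_H)$ (a standard fact, following from the inclusion $\hull (A_\lambda) \subseteq (\hull A)_\lambda$); and $F$ is continuous by hypothesis. Composition gives continuity of $\Phi$.

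The second and final step is a general principle: if $x_n$ is a.s.\ relatively compact in some metric space with limit set $K$ (compact) and $\Phi$ is continuous, then the set of limit points of $\Phi(x_n)$ is exactly $\Phi(K)$, which is compact and attains its supremum, so $\limsup_n \Phi(x_n) = \sup_K \Phi$. By Theorem~\ref{thm:strassen-rw}, a.s.\ the sequence $Y_n/\ell(n)$ is relatively compact in $(\cCdo,\rho_\infty)$ with limit set $K = \Sigma^{1/2} U_d$. Hence
$$\limsup_{n \to \infty} \Phi(Y_n/\ell(n)) = \sup_{g \in \Sigma^{1/2} U_d} \Phi(g) = \sup_{f \in U_d} F\bigl(\Sigma^{1/2} \hull f[0,1]\bigr), \quad \text{a.s.,}$$
where the second equality uses linearity of $\Sigma^{1/2}$ to write $\hull (\Sigma^{1/2} f)[0,1] = \Sigma^{1/2} \hull f[0,1]$.

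There is no serious obstacle here. The only mild technicalities are the $1$-Lipschitz property of the convex-hull map (classical) and the elementary observation that continuous images of relatively compact sequences behave well under $\limsup$. Khoshnevisan's extra hypothesis that coordinates of $Z$ be independent was apparently introduced only to access a product version of Strassen; it is entirely circumvented here by appealing directly to the multidimensional statement in Theorem~\ref{thm:strassen-rw}, which accommodates an arbitrary symmetric non-negative definite $\Sigma$ via the limit set $\Sigma^{1/2}U_d$.
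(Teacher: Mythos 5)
Your proposal is correct and follows essentially the same route as the paper: rewrite $F(\cH_n)/\ell(n)^\beta$ via homogeneity as a continuous functional of $Y_n/\ell(n)$ (using continuity of $f \mapsto \hull f[0,1]$) and then apply Theorem~\ref{thm:strassen-rw}. The only difference is cosmetic — you verify the Lipschitz property of the hull map and spell out the limit-set argument explicitly, where the paper cites Lemma~2.1 of~\cite{wx2} and leaves the final step implicit.
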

\begin{proof}
The mapping $f \mapsto \hull f[0,1]$ is continuous from $(\cCdo, \rho_\infty)$ to $(\fKdo,\rho_H)$
(see e.g.~Lemma~2.1 of~\cite{wx2}). Hence $f \mapsto F ( \hull f[0,1])$
is continuous from $(\cCdo, \rho_\infty)$ to $(\R,\rho_E)$. By homogeneity and the fact that $\hull Y_n [0,1] = \cH_n$, we get
\[ \frac{ F ( \cH_n)}{\ell (n)^\beta } = F \left( \frac{\cH_n}{\ell(n)} \right) = F \left( \frac{\hull Y_n [0,1]}{\ell (n)} \right) ,\]
so that the result follows from Theorem~\ref{thm:strassen-rw}.
\end{proof}

This theorem applies to $F=V_k$ an intrinsic volume,
because
 the intrinsic volume function~$V_k$ is 
continuous and 
homogeneous of order $k$~(see e.g.~Theorem~6.13 of~\cite[p.~105]{gruber}).

Khoshnevisan also formulated a version of Theorem~\ref{thm:khoshnevisan} for Brownian motion
(which is deduced from Theorem~\ref{thm:strassen-bm} in the same way as 
Theorem~\ref{thm:khoshnevisan} is obtained from Theorem~\ref{thm:strassen-rw});
 important antecedent results in that setting are due to~L\'evy~\cite{levy} and Evans~\cite[\S 1.3]{evans}.

A further consequence (of either Theorem~\ref{thm:strassen-rw} or Theorem~\ref{thm:khoshnevisan}) 
is the following 
``shape theorem'' for convex hulls of walks with zero drift and finite variance,
which says, roughly speaking, that the process~$\cH_n$ achieves every possible shape infinitely often. This contrasts with the case $\mu \neq \0$, where the strong law~\eqref{eq:hull-slln}
says that the shape converges to a line segment. Corollary~\ref{cor:shape} is an 
extension of Theorem~1.5 of~\cite{mcrw}, which dealt with the case $d=2$,
to general dimensions; the proof of~\cite{mcrw} also readily extends to prove Corollary~\ref{cor:shape}, so we only indicate the argument
here to demonstrate how the Strassen-type results can be applied.  Write $\diam A := \sup_{x,y \in A} \rho_E (x,y)$
for the diameter of~$A \in \fKdo$.

\begin{corollary}[Shape theorem]
\label{cor:shape}
Suppose that~\eqref{ass:moments} holds with $\mu = \0$, and that $\Sigma$ is (strictly) positive definite.
Then for every $C \in \fCdo$ with $\diam C = 1$,
\[ \liminf_{n \to \infty} \rho_H \left( \frac{\cH_n}{\diam \cH_n} , C \right) = 0 , \as\]
\end{corollary}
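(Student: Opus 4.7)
The plan is to use Strassen's LIL for random walks (Theorem~\ref{thm:strassen-rw}) to identify the subsequential Hausdorff limits of $\cH_n/\ell(n)$, and then to show that, after normalizing by diameter, these limits densely fill out the set $\{C\in\fCdo:\diam C=1\}$. By the same argument already used in the proof of Theorem~\ref{thm:khoshnevisan}, continuity of the map $f\mapsto\hull f[0,1]$ from $(\cCdo,\rho_\infty)$ to $(\fKdo,\rho_H)$, together with the identity $\hull Y_n[0,1]=\cH_n$, lifts Theorem~\ref{thm:strassen-rw} to the statement that, a.s., $\cH_n/\ell(n)$ is relatively compact in $(\fCdo,\rho_H)$ with set of limit points exactly $\{\Sigma^{1/2}\hull f[0,1]:f\in U_d\}$.

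The diameter functional $\diam:(\fKdo,\rho_H)\to(\RP,\rho_E)$ is continuous and positively homogeneous of order~$1$. Positive definiteness of~$\Sigma$ together with the classical LIL gives $\diam\cH_n\to\infty$ a.s. Hence whenever $\cH_{n_k}/\ell(n_k)\to K$ in $\rho_H$ along a subsequence with $\diam K>0$, we also have $\cH_{n_k}/\diam\cH_{n_k}\to K/\diam K$ in~$\rho_H$, so it suffices to show that every $C\in\fCdo$ with $\diam C=1$ can be approximated arbitrarily well by sets of the form $\Sigma^{1/2}\hull f[0,1]/\diam(\Sigma^{1/2}\hull f[0,1])$ with $f\in U_d$.

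Given such a $C$ and $\eps>0$, put $D:=\Sigma^{-1/2}C\in\fCdo$ (meaningful since $\Sigma^{1/2}$ is invertible, and $\0\in D$ as $\0\in C$). Approximate $D$ within $\eps$ in Hausdorff distance by a convex polytope $D_\eps\in\fCdo$ whose vertex set contains~$\0$, and let $g_\eps$ be a piecewise-linear path, starting at $\0$ and visiting each vertex of~$D_\eps$ in succession, so that $g_\eps\in\cA_d$ and $\hull g_\eps[0,1]=D_\eps$. For $\lambda_\eps>0$ sufficiently small, the rescaled path $f_\eps:=\lambda_\eps g_\eps$ lies in~$U_d$, and $\hull f_\eps[0,1]=\lambda_\eps D_\eps$. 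By the first paragraph, a.s.\ some subsequence of $\cH_n/\ell(n)$ converges in~$\rho_H$ to $\lambda_\eps\Sigma^{1/2}D_\eps$; normalizing by diameter, the corresponding subsequential limit of $\cH_n/\diam\cH_n$ equals
\[
\frac{\Sigma^{1/2}D_\eps}{\diam(\Sigma^{1/2}D_\eps)},
\]
which converges to $C/\diam C=C$ as $\eps\downarrow 0$ by Lipschitz continuity of $\Sigma^{1/2}$ and of~$\diam$. Therefore $\liminf_{n\to\infty}\rho_H(\cH_n/\diam\cH_n,C)=0$ a.s.

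The principal subtlety is verifying that the Strassen limit class is rich enough: one must show that every $D\in\fCdo$ is a Hausdorff limit of sets of the form $\hull g[0,1]$ with $g\in\cA_d$. This is what the polytopal/piecewise-linear construction delivers. The dilation freedom $f_\eps=\lambda_\eps g_\eps$ is used only to bring $g_\eps$ into the Cameron--Martin unit ball~$U_d$, since scalar rescaling leaves the normalized shape $\cdot/\diam(\cdot)$ invariant. The rest is essentially a soft continuity argument, and the strategy mirrors the $d=2$ shape theorem of~\cite{mcrw}.
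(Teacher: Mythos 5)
Your proposal is correct and follows essentially the same route as the paper: lift Strassen's LIL for the walk through the continuity of $f\mapsto\hull f[0,1]$, use $\diam\cH_n\to\infty$ to pass to the diameter-normalized shape, and realize an arbitrary $C\in\fCdo$ as a limit of normalized sets $\Sigma^{1/2}\hull g[0,1]/\diam(\Sigma^{1/2}\hull g[0,1])$ via polytopal approximation and a piecewise-linear path rescaled into $U_d$. The only cosmetic difference is that the paper applies the Strassen transfer directly to the functional $f\mapsto\hull f[0,1]/\diam f[0,1]$, whereas you first identify the limit points of $\cH_n/\ell(n)$ and then divide by the diameter; the substance is identical.
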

\begin{proof}
Since $\Sigma$ is full rank, the random walk $S_n$ is genuinely $d$-dimensional.
It holds that $\lim_{n\to\infty} \diam \cH_n = \infty$, a.s.,
which can be seen as a consequence of the classical LIL; see also~\eqref{eq:diam-lil} below.
For $f \in \cCdo$, define $F (f) :=
\hull f[0,1] / \diam f[0,1]$ if $\diam f[0,1] >0$, and $\{ \0 \}$ otherwise. Then $F : ( \cCdo , \rho_\infty) \to ( \fKdo, \rho_H)$ is continuous outside the set~$\{ f : \diam f[0,1] = 0 \}$
(cf.~Lemma~3.6 of~\cite{mcrw}) and this set has Wiener measure zero, 
hence Theorem~\ref{thm:strassen-rw} implies, similarly to in the proof of Theorem~\ref{thm:khoshnevisan},
that, a.s., 
\[ \frac{\cH_n}{\diam \cH_n} \text{ has as it set of limit points } \left\{ \frac{\hull \Sigma^{1/2} f[0,1]}{\diam \Sigma^{1/2} f [0,1]} : f \in U_d \right\}. \]
Any $C \in \fCdo$ can be arbitrarily well-approximated in Hausdorff distance by a polytope $\Sigma^{1/2} \hull \{ x_1, \ldots, x_m \}$
for some  finite set of points $x_i \in \R^d$
(see Theorem~1.8.16 of~\cite{schneider}).
Given $x_1, \ldots, x_m$, set $L_k := \sum_{i=1}^{k-1} \| x_{i+1} - x_i \|$ for $1 \leq k \leq m$, and define $h_m \in \cCdo$
by $h_m ( L_k/L_m ) = x_k$ for $1 \leq k \leq m$, with linear interpolation; in words,
$h_m$ is a polygonal path, parametrized by arc length, that visits $\0, x_1, \ldots, x_m$ in sequence.
Then $h'_m (t) = L_m$ for a.e.~$t \in [0,1]$, so $f_m := h_m / \sqrt{L_m}$ has $f_m \in U_d$, and~$\hull \Sigma^{1/2} f_m[0,1]/\diam \Sigma^{1/2} f_m [0,1]$
approximates $C /\diam (C) = C$ with an error that can be made arbitrarily small.
\end{proof}

One can apply Theorem~\ref{thm:khoshnevisan} to obtain LILs for various functionals: see Appendix~\ref{sec:khoshnevisan} for some examples that
extend slightly some of those from~\cite{khoshnevisan}.

\section{A Strassen-type theorem for the case with drift}
\label{sec:strassen-drift}

In this section we present analogues of
the LIL results of Section~\ref{sec:strassen} for the case of a non-zero drift;
essentially these results are obtained by combining the strong law of large numbers for the drift
with Strassen's law for the coordinates orthogonal to the drift.
To motivate the random-walk result, we first consider the
case of 
Brownian motion with drift in $\R^d$, $d \geq 2$.
Suppose that $X$ is a Brownian motion with drift $\mu \in \R^d \setminus \{ \0\}$ and infinitesimal covariance matrix $\Sigma$, so that $X (t) = \mu t + \Sigma^{1/2} B(t)$ for $t \in \RP$.

Without loss of generality, we suppose coordinates are chosen so that the 
standard orthonormal basis
$(e_1, \ldots e_d)$ of~$\R^d$, $d \geq 2$, has $e_1 = \hat \mu$.
Write $X_i (t) := e_i^\tra  X (t)$. 
For $2 \leq i, j \leq d$, we have $X_i (t) = e_i^\tra \Sigma^{1/2} B(t)$, and so
$\Exp ( X_i (t) X_j (t) ) = e_i^\tra \Sigma e_j$. Let $\Sigmap$ 
denote the matrix obtained from $\Sigma$ by omitting the first row and column, i.e.,
\[ \bigl( \Sigmap \bigr)_{ij} := \Sigma_{i+1,j+1}, \text{ for } 1 \leq i, j \leq d-1 ,\]
which is the (symmetric, non-negative definite) infinitesimal covariance matrix of $X_2, \ldots, X_d$.
Note that $\det \Sigmap$ is the  first principal minor of $\Sigma$.

 For $n \in \N$,
define $\psi_n : \R^d \to \R^d$, acting on $x = (x_1, \ldots, x_d )$, by
\begin{equation}
\label{eq:psi-def}
\psi_n ( x_1, \ldots, x_d ) = \left( \frac{x_1}{n} , \frac{x_2}{\ell(n)} , \ldots, \frac{x_d}{\ell(n)} \right) .
\end{equation}
Let $I_\mu : [0,1] \to \RP$ denote the function $I_\mu (t) = \| \mu \| t$,
and set
\[ W_{d,\mu,\Sigma} := \{  g = (I_\mu, \Sigmap^{1/2} f ) : f \in U_{d-1} \} , \text{ for } d \geq 2. \]
Observe that
if $f \in \cA_{d-1}$, then
(since entries of~$\Sigma$ are uniformly bounded)
$ \Sigmap^{1/2} f  \in \cA_{d-1}$ too; since $I_\mu'(t) = \| \mu \|$,
it follows that $g = ( I_\mu , f ) \in \cA_d$ whenever
$f \in \cA_{d-1}$. In particular, $W_{d,\mu,\Sigma} \subset \cA_d$.
To enable us to include the (somewhat trivial) case $d=1$ in our statements,
we also set $W_{1,\mu,\Sigma} := \{ I_\mu \}$.

\begin{theorem}
\label{thm:strassen-drift-bm}
Suppose that~$\mu \neq \0$. 
A.s., the sequence $( \psi_n ( X(nt) ) )_{t \in [0,1]}$ in $\cCdo$ is relatively compact, and  its set of limit points is~$W_{d,\mu,\Sigma}$.
\end{theorem}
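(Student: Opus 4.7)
The plan is to reduce Theorem~\ref{thm:strassen-drift-bm} to the standard Strassen LIL for a $d$-dimensional Brownian motion (Theorem~\ref{thm:strassen-bm}), by writing $X(t) = \mu t + \Sigma^{1/2} B(t)$ and exploiting the fact that $\psi_n$ scales the drift direction and its orthogonal complement differently. In the basis with $e_1 = \hat\mu$, the first coordinate of $\psi_n(X(nt))$ equals $\|\mu\| t + (\Sigma^{1/2} B)_1(nt)/n$, while the remaining $d-1$ coordinates equal $(\Sigma^{1/2} B)_i(nt)/\ell(n)$ for $i \in \{2,\ldots,d\}$. The first step is to check that the first coordinate converges to $I_\mu$ uniformly on $[0,1]$ almost surely: since Theorem~\ref{thm:strassen-bm} applied to $B$ forces $\sup_{t\in[0,1]}\|B(nt)\| = O(\ell(n))$ a.s., the correction $(\Sigma^{1/2} B)_1(nt)/n$ is $O(\ell(n)/n)$ uniformly in~$t$, which vanishes.

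For the orthogonal coordinates, let $P : \R^d \to \R^{d-1}$ denote the projection onto the last $d-1$ coordinates. The linear map $T : \cCdo \to \cC_{d-1}^0$ given by $T(f)(t) := P\Sigma^{1/2} f(t)$ is continuous in the uniform topology, so, mirroring the continuous-mapping argument used in the proof of Theorem~\ref{thm:khoshnevisan}, Theorem~\ref{thm:strassen-bm} implies that, a.s., the sequence $T(B(n\cdot)/\ell(n))$ is relatively compact in $\cC_{d-1}^0$ with limit set $T(U_d) = P\Sigma^{1/2} U_d$. It then remains to identify $P\Sigma^{1/2} U_d$ with $\Sigmap^{1/2} U_{d-1}$. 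Here the $(d-1)$-dimensional process $Y := (X_2,\ldots, X_d) = P\Sigma^{1/2} B$ is a Brownian motion with covariance $\Sigmap$, so the $\cC_{d-1}^0$-valued sequence $Y(n\cdot)/\ell(n)$ has the same joint law as $\Sigmap^{1/2} \tilde B(n\cdot)/\ell(n)$ for a standard $(d-1)$-dimensional Brownian motion $\tilde B$; applying Theorem~\ref{thm:strassen-bm} and continuity of $\Sigmap^{1/2}$ to the latter yields the deterministic limit set $\Sigmap^{1/2} U_{d-1}$, which must therefore coincide with $P\Sigma^{1/2} U_d$ (since an a.s.\ deterministic limit set is determined by the law of the sequence).

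Combining the two pieces, any subsequence of $\psi_n(X(n\cdot))$ admits a further subsequence along which the orthogonal coordinates converge in $\cC_{d-1}^0$ (by relative compactness from the previous step), while the first coordinate converges deterministically to $I_\mu$ on every subsequence; hence any joint limit in $\cCdo$ has the form $(I_\mu, \Sigmap^{1/2} f)$ for some $f \in U_{d-1}$, i.e., it lies in $W_{d,\mu,\Sigma}$. Conversely, for any $g = (I_\mu, \Sigmap^{1/2} f) \in W_{d,\mu,\Sigma}$, Strassen for $B$ produces a subsequence along which the orthogonal coordinates tend to $\Sigmap^{1/2} f$, while the first coordinate converges to $I_\mu$ along every subsequence, so $g$ is realised. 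The case $d=1$ is immediate from the uniform convergence of the single coordinate to $I_\mu$. The main technical subtlety is the identification $P\Sigma^{1/2} U_d = \Sigmap^{1/2} U_{d-1}$: morally this reflects the Cameron--Martin structure of the $(d-1)$-dimensional Brownian motion with covariance $\Sigmap$, but the distributional argument above sidesteps a pathwise calculation that would require some extra linear-algebra book-keeping on derivatives, especially in the possibly degenerate case.
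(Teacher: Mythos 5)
Your proposal is correct and follows essentially the same route as the paper: split $\psi_n(X(n\cdot))$ into the drift coordinate, which converges uniformly to $I_\mu$, and the $(d-1)$ orthogonal coordinates, which form a Brownian motion with covariance $\Sigmap$ to which Strassen's Theorem~\ref{thm:strassen-bm} applies, giving limit set $\Sigmap^{1/2} U_{d-1}$. The only differences are cosmetic: you derive the uniform convergence of the first coordinate from the $O(\ell(n))$ bound implicit in Strassen's theorem rather than citing a functional strong law, and you make explicit (via equality in law of $P\Sigma^{1/2}B$ and $\Sigmap^{1/2}\tilde B$ and the a.s.\ determinism of the limit set) the identification $P\Sigma^{1/2}U_d=\Sigmap^{1/2}U_{d-1}$ that the paper leaves implicit.
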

\begin{proof}
The strong law of large numbers (in functional form, see e.g.~Theorem~3.4 in~\cite{lmw})
says that
$( X_1 (nt)/n )_{t \in [0,1]}$ converges a.s.~in $(\cCdo, \rho_\infty)$ to~$I_\mu$ as $n \to \infty$.
 On the other hand,
Strassen's theorem for Brownian motion on $\R^{d-1}$ (Theorem~\ref{thm:strassen-bm}) implies that
$( (X_2 (nt), \ldots, X_d (nt) )/ \ell(n) )_{t \in [0,1]}$ has as its limit points $\Sigmap^{1/2} U_{d-1}$.
\end{proof}

Here is the random-walk formulation. Recall the definition of $Y_n \in \cCdo$ from~\eqref{eq:Y-def}.

\begin{theorem}
\label{thm:strassen-drift-rw}
Suppose that~\eqref{ass:moments} holds with $\mu \neq \0$.
A.s., the sequence $\psi_n ( Y_n )$ in $\cCdo$ is relatively compact, and   its set of limit points is~$W_{d,\mu,\Sigma}$.
In particular, if $F : (\cCdo, \rho_\infty) \to (\R,\rho_E)$ is continuous, then $\limsup_{n \to \infty} F ( \psi_n ( Y_n ) ) = \sup_{h \in W_{d,\mu,\Sigma}} F( h)$, a.s.
\end{theorem}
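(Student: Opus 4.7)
The plan mirrors the Brownian-motion argument of Theorem~\ref{thm:strassen-drift-bm}: apply the functional SLLN to the component of $Y_n$ along $\hat\mu$, apply Strassen's LIL (Theorem~\ref{thm:strassen-rw}) to the orthogonal components, and then combine the two statements in the product-uniform topology on $\cCdo$. With coordinates chosen so that $e_1 = \hat\mu$, we have $\mu = \|\mu\|\, e_1$ and $\Exp[e_i^\tra Z] = 0$ for $2 \leq i \leq d$. Write $Y_n = (Y_n^{(1)}, Y_n^{\per})$ with $Y_n^{(1)} := e_1^\tra Y_n \in \cC_1^0$ and $Y_n^{\per} := (e_2^\tra Y_n, \ldots, e_d^\tra Y_n) \in \cC_{d-1}^0$; then $Y_n^{\per}$ is the linear interpolation of a $(d-1)$-dimensional random walk with i.i.d.\ zero-mean increments of covariance $\Sigmap$, while $Y_n^{(1)}$ is the interpolation of a one-dimensional walk with mean increment $\|\mu\|$ and finite variance.

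Two ingredients then do the work. First, the functional strong law of large numbers (e.g.~\cite[Thm.~3.4]{lmw}) applied to $Y_n^{(1)}$ yields $\rho_\infty(Y_n^{(1)}/n,\, I_\mu) \to 0$ almost surely. Second, Theorem~\ref{thm:strassen-rw} applied to the zero-drift walk $Y_n^{\per}$ yields that, almost surely, $Y_n^{\per}/\ell(n)$ is relatively compact in $(\cC_{d-1}^0, \rho_\infty)$ with limit-point set exactly $\Sigmap^{1/2} U_{d-1}$. Both statements hold simultaneously on the intersection of their full-measure events.

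Now fix a sample point in this intersection. Since $\rho_\infty$ on $\cCdo$ equals the maximum of the component uniform metrics, $\psi_n(Y_n) = (Y_n^{(1)}/n,\, Y_n^{\per}/\ell(n))$, and relative compactness plus identification of the limit set reduce to a product-topology calculation. Given any subsequence $(n_k)$, the relative compactness in the orthogonal block extracts a further subsequence $(n_{k_j})$ with $Y_{n_{k_j}}^{\per}/\ell(n_{k_j}) \to \Sigmap^{1/2} f$ for some $f \in U_{d-1}$, while the SLLN forces $Y_{n_{k_j}}^{(1)}/n_{k_j} \to I_\mu$, giving $\psi_{n_{k_j}}(Y_{n_{k_j}}) \to (I_\mu, \Sigmap^{1/2} f) \in W_{d,\mu,\Sigma}$. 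Conversely, for each $f \in U_{d-1}$ Theorem~\ref{thm:strassen-rw} supplies a (random) subsequence along which $Y_{n_k}^{\per}/\ell(n_k) \to \Sigmap^{1/2} f$, and combining with the SLLN shows that $(I_\mu, \Sigmap^{1/2} f)$ is a limit point of $\psi_n(Y_n)$. The \emph{in particular} statement then follows from continuity of $F$: relative compactness makes $\limsup_n F(\psi_n(Y_n))$ equal to the supremum of $F$ over the subsequential limits of $\psi_n(Y_n)$, which is $\sup_{h \in W_{d,\mu,\Sigma}} F(h)$. No step presents a serious obstacle; the point worth brief care is precisely this product-topology combination, but it reduces to the standard metric-space fact that if one block of a sequence converges and the other is relatively compact with limit set $K$, the pair is relatively compact with limit set equal to the Cartesian product of the limit and $K$.
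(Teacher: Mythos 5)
Your proposal is correct and follows essentially the same route as the paper: the paper's proof also decomposes $Y_n$ into the $\hat\mu$-component (handled by the functional SLLN) and the orthogonal block (handled by Theorem~\ref{thm:strassen-rw} for the zero-drift walk with covariance $\Sigmap$), exactly as in the proof of Theorem~\ref{thm:strassen-drift-bm}. The product-topology combination you spell out is left implicit in the paper but is the intended argument.
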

\begin{proof}
The proof runs along the same lines as that of Theorem~\ref{thm:strassen-drift-bm},
but in place of Theorem~\ref{thm:strassen-bm} one applies its random-walk analogue, Theorem~\ref{thm:strassen-rw}.
\end{proof}

As a corollary, we obtain an analogue of Khoshnevisan's theorem (Theorem~\ref{thm:khoshnevisan}) in the case with drift.

\begin{corollary}
\label{cor:strassen-hull-drift}
Suppose that~\eqref{ass:moments} holds with $\mu \neq \0$.
Let~$F: (\fKdo, \rho_H) \to (\R, \rho_E)$ be continuous.
Then
\[ \limsup_{n \to \infty} F ( \psi_n ( \cH_n ))  = \sup_{h \in W_{d,\mu,\Sigma}} F ( \hull h [0,1] ), \as \]
\end{corollary}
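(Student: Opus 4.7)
The plan is to reduce Corollary~\ref{cor:strassen-hull-drift} directly to Theorem~\ref{thm:strassen-drift-rw} by exploiting the linearity of the scaling map $\psi_n$ and the continuity of the convex-hull operation, in the same spirit as the passage from Theorem~\ref{thm:strassen-rw} to Theorem~\ref{thm:khoshnevisan}.

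First, I would observe that $\psi_n : \R^d \to \R^d$ defined at~\eqref{eq:psi-def} is a linear map (a diagonal scaling), and therefore commutes with the operation of taking convex hulls: for any $A \subseteq \R^d$, we have $\psi_n ( \hull A ) = \hull \psi_n ( A )$. Applying this with $A = Y_n[0,1]$ and recalling from Section~\ref{sec:strassen} that $\hull Y_n[0,1] = \cH_n$, one obtains the key identity
\[ \psi_n ( \cH_n ) = \hull ( \psi_n \circ Y_n ) [0,1], \]
where $\psi_n \circ Y_n \in \cCdo$ denotes the continuous path $t \mapsto \psi_n ( Y_n (t) )$.

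Second, I would define $\tilde F : \cCdo \to \R$ by $\tilde F (f) := F ( \hull f [0,1] )$. The mapping $f \mapsto \hull f [0,1]$ from $(\cCdo, \rho_\infty)$ to $(\fKdo, \rho_H)$ is continuous (the same ingredient used in the proof of Theorem~\ref{thm:khoshnevisan}, via Lemma~2.1 of~\cite{wx2}), so $\tilde F$ is a composition of continuous maps and is itself continuous from $(\cCdo, \rho_\infty)$ to $(\R, \rho_E)$.

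Third, I would apply the functional statement in Theorem~\ref{thm:strassen-drift-rw} to the continuous functional $\tilde F$ evaluated on the sequence $\psi_n ( Y_n )$. This yields, almost surely,
\[ \limsup_{n \to \infty} \tilde F \bigl( \psi_n ( Y_n ) \bigr) = \sup_{h \in W_{d,\mu,\Sigma}} \tilde F ( h ) = \sup_{h \in W_{d,\mu,\Sigma}} F ( \hull h [0,1] ) . \]
Substituting the identity from the first step into the left-hand side gives $\limsup_{n \to \infty} F ( \psi_n ( \cH_n ) )$, which is the required conclusion. There is essentially no obstacle here: the statement is a routine corollary of Theorem~\ref{thm:strassen-drift-rw}, and the only point that needs verifying is the commutation $\psi_n ( \hull \cdot ) = \hull \psi_n ( \cdot )$, which is immediate from linearity of $\psi_n$.
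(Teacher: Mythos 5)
Your proposal is correct and follows exactly the route the paper intends: the corollary is stated without proof precisely because it follows from the ``in particular'' clause of Theorem~\ref{thm:strassen-drift-rw} applied to $\tilde F(f) = F(\hull f[0,1])$, using continuity of $f \mapsto \hull f[0,1]$ and the identity $\psi_n(\cH_n) = \hull \psi_n(Y_n)[0,1]$, just as in the proof of Theorem~\ref{thm:khoshnevisan}. All three steps, including the commutation of the linear map $\psi_n$ with the convex-hull operation, are verified correctly.
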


As another application  of Theorem~\ref{thm:strassen-drift-rw}, we give a LIL 
for the volume of the convex hull $\cG_n$ of the centre of mass process defined at~\eqref{eq:com-def},
which includes the case $d=2$ stated at~\eqref{eq:centre-of-mass-area-lil} above. Define
$\fg : ( \cCdo , \rho_\infty ) \to (\cCdo , \rho_\infty )$ for all $f \in \cCdo$ by
\[ \fg_f (0) := \0, \text{ and } \fg_f (t) := \frac{1}{t} \int_0^t f(s) \ud s  \text{ for } 0 < t \leq 1.\]
 One verifies that $\fg_f$ is indeed in $\cCdo$, since $\lim_{t \downarrow 0} \fg_f (t) = f(0) = \0$.

\begin{theorem}[LIL for the convex hull of the centre of mass]
\label{thm:com} 
Suppose that~\eqref{ass:moments} holds with $\mu \neq \0$. Then 
\[ \limsup_{n \to \infty} \frac{V_d (\cG_n)}{\sqrt{2^{d-1} n^{d+1} ( \log \log n)^{d-1} }} = \tlambda_d \cdot \frac{\| \mu \|}{2} \cdot \sqrt{ \det \Sigmap } , \as,
\]
where the constants $\tlambda_d$ are given via the variational formula 
\begin{equation}
\label{eq:tlambda-def}
\tlambda_d := \sup_{f \in U_{d-1}} V_d ( H ( \fg_f ) ) .\end{equation}
\end{theorem}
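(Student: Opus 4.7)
The plan is to combine Theorem~\ref{thm:strassen-drift-rw} for the interpolated walk $Y_n$ with continuity of the composed functional $h \mapsto V_d(\hull \fg_h[0,1])$, analogously to how Corollary~\ref{cor:strassen-hull-drift} is deduced from Theorem~\ref{thm:strassen-drift-rw}. The bridge between the discrete centre of mass and the continuous-time average is the identity obtained by integrating the piecewise-linear trajectory $Y_n$ on each subinterval $[(j-1)/n, j/n]$:
\begin{equation*}
G_k \;=\; \fg_{Y_n}(k/n) + \frac{S_k}{2k}, \quad k \in \{1, \ldots, n\},
\end{equation*}
together with $G_0 = \0 = \fg_{Y_n}(0)$. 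This exhibits $G_k$ as a temporal average of $Y_n$ plus a small correction of order $1/k$.

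Set $\cC_n := \hull \fg_{Y_n}[0,1] \in \fCdo$; by linearity of $\psi_n$, $\psi_n \cC_n = \hull \fg_{\psi_n Y_n}[0,1]$. The first step will be to show that $\rho_H(\psi_n \cG_n, \psi_n \cC_n) \to 0$ almost surely. Two approximations combine here: (i)~the error $\psi_n(S_k)/(2k)$ from the above identity vanishes uniformly in $k \in \{1, \ldots, n\}$ as $n \to \infty$ --- splitting at a deterministic large constant $K$, for $k > K$ one uses the almost sure bound $\|\psi_n Y_n\|_\infty \leq L$ coming from the relative compactness in Theorem~\ref{thm:strassen-drift-rw} to bound $\|\psi_n S_k\|/(2k) \leq L/(2K)$, while for $k \leq K$ one uses $\psi_n S_k \to \0$ a.s.~for each fixed $k$; and (ii)~the discrete samples $\{\psi_n \fg_{Y_n}(k/n)\}_{k=0}^n$ are $o(1)$-dense in the continuous trajectory $\psi_n \fg_{Y_n}[0,1]$, which holds since $\fg$ is $1$-Lipschitz from $(\cCdo, \rho_\infty)$ to itself, so the family $\{\fg_{\psi_n Y_n}\}$ inherits almost sure equicontinuity from $\{\psi_n Y_n\}$. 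Both perturbations propagate through the convex hull (Lipschitz in Hausdorff distance), and continuity of $V_d$ on $\fCdo$ then yields $V_d(\psi_n \cG_n) - V_d(\psi_n \cC_n) \to 0$ a.s.

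The functional $F(h) := V_d(\hull \fg_h[0,1])$ is continuous from $(\cCdo, \rho_\infty)$ to $(\R, \rho_E)$, being a composition of $\fg$, $f \mapsto \hull f[0,1]$, and $V_d$ restricted to convex bodies. Applying the ``in particular'' clause of Theorem~\ref{thm:strassen-drift-rw} to $F$ therefore gives, almost surely,
\begin{equation*}
\limsup_{n \to \infty} V_d(\psi_n \cC_n) \;=\; \sup_{h \in W_{d,\mu,\Sigma}} V_d(\hull \fg_h[0,1]).
\end{equation*}
For $h = (I_\mu, \Sigmap^{1/2} f)$ with $f \in U_{d-1}$, linearity of $\fg$ gives $\fg_h(t) = (\|\mu\| t/2, \Sigmap^{1/2} \fg_f(t))$, and the linear bijection $(t_1, y) \mapsto (2t_1/\|\mu\|, \Sigmap^{-1/2} y)$ maps $\hull \fg_h[0,1]$ onto $H(\fg_f)$ with Jacobian $2/(\|\mu\| \sqrt{\det \Sigmap})$, so $V_d(\hull \fg_h[0,1]) = (\|\mu\|/2) \sqrt{\det \Sigmap} \, V_d(H(\fg_f))$; the degenerate case $\det \Sigmap = 0$ makes both sides of the claimed LIL trivially zero. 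Taking the supremum over $f \in U_{d-1}$ introduces the constant $\tlambda_d$ from~\eqref{eq:tlambda-def}. Combined with Step~2 and the scaling $V_d(\psi_n \cG_n) = V_d(\cG_n)/\sqrt{2^{d-1} n^{d+1} (\log \log n)^{d-1}}$ coming from $|\det \psi_n| = 1/(n \ell(n)^{d-1})$, this yields the stated LIL. The main obstacle is part~(i) of the Hausdorff approximation: the prefactor $1/(2k)$ is not itself uniformly small in $k$, so the approximation requires an $\omega$-by-$\omega$ split into small- and large-$k$ regimes handled by complementary mechanisms; the rest is a straightforward chaining of continuity statements.
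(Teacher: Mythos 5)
Your proposal is correct, and its overall skeleton (Theorem~\ref{thm:strassen-drift-rw}, continuity of $\fg$, and the affine scaling identity $V_d(\hull \fg_h[0,1]) = \tfrac{\|\mu\|}{2}\sqrt{\det\Sigmap}\,V_d(H(\fg_f))$) coincides with the paper's. Where you genuinely diverge is in controlling the discrepancy between $\cG_n$ and $\hull\fg_{Y_n}[0,1]$. The paper works with the \emph{unscaled} error $\Delta_n(t) = \fg_{Y_n}(t) - G_{\lfloor nt\rfloor}$ for all $t$, shows only that $\sup_t\|\Delta_n(t)\|$ is a.s.\ \emph{bounded} (it does not vanish: at non-grid $t$ there is a contribution $\tfrac{\theta}{k+\theta}G_k$ of order $\|\mu\|$), and then must invoke the Steiner formula together with separately derived a priori upper bounds on all the lower intrinsic volumes $V_k(\cG_n)$, as in~\eqref{eq:intrinsic-volume-upper-bound-com}, to conclude that the volume discrepancy is $o(n^{d/2+\eps})$ and hence negligible at the LIL scale. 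You instead prove the stronger statement that the \emph{scaled} Hausdorff distance $\rho_H(\psi_n\cG_n,\psi_n\hull\fg_{Y_n}[0,1])$ tends to zero, by splitting into the exact grid-point identity $G_k = \fg_{Y_n}(k/n) + S_k/(2k)$ (whose correction, after $\psi_n$, is handled by your small-$k$/large-$k$ dichotomy using the a.s.\ uniform bound on $\|\psi_n Y_n\|_\infty$) and the $o(1)$-density of grid samples in the scaled trajectory via equicontinuity of $\{\fg_{\psi_n Y_n}\}$. This buys you a cleaner endgame: no Steiner formula and no intrinsic-volume bounds are needed, only that the limit points of $\psi_n\cG_n$ and $\psi_n\hull\fg_{Y_n}[0,1]$ coincide. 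Two details worth making explicit when writing this up: the equicontinuity of $\{\fg_{\psi_n Y_n}\}$ is most directly justified by noting that this family is the image of the relatively compact family $\{\psi_n Y_n\}$ under the continuous map $\fg$ (Lipschitzness of $\fg$ in the function argument does not by itself transfer a modulus of continuity in $t$); and the step from $\rho_H(\psi_n\cG_n,\psi_n\cC_n)\to 0$ to $V_d(\psi_n\cG_n)-V_d(\psi_n\cC_n)\to 0$ uses uniform continuity of $V_d$ on the (relatively compact, hence eventually uniformly bounded) family of sets involved, not just pointwise continuity. Neither is a gap.
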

 
We defer the proof of Theorem~\ref{thm:com} to Section~\ref{sec:com}, since the proof runs along similar lines to
parts of the proofs of our main results.
We end this section with the proofs of our LIL  for $V_d (\cH_n)$, Theorem~\ref{thm:constants}.

\begin{proof}[Proof of Theorem~\ref{thm:constants}.]
By the scaling property of volumes, for $n \in \N$,
\[ V_d ( \psi_n ( \cH_n ) ) = 
\frac{V_d ( \cH_n)}{n \ell(n)^{d-1}} .\] 
Then, by Corollary~\ref{cor:strassen-hull-drift} with $G = V_d$,
we obtain, a.s., 
\[ \limsup_{n \to \infty} \frac{V_d (\cH_n)}{\sqrt{2^{d-1} n^{d+1} (\log \log n)^{d-1} }} = \sup_{h \in W_{d,\mu,\Sigma}} V_d ( \hull h[0,1] ) .\]
Now $h \in  W_{d,\mu,\Sigma}$ has $h =  (I_\mu, \Sigmap^{1/2} f ) $ for some $f \in U_{d-1}$, and, by scaling, if $h_0 := ( I_{e_1}, f)$,
\[  V_d ( \hull h[0,1] ) = \| \mu\| \cdot \det \Sigmap^{1/2} \cdot  V_d ( \hull h_0 [0,1] ) 
= \| \mu\| \cdot \sqrt { \det \Sigmap } \cdot  V_d ( H (f)  )
,\]
where $H$~is defined at~\eqref{eq:H-def}. Thus we deduce~\eqref{eq:volume-lil} and characterize the constant~$\lambda_d$ via~\eqref{eq:gamma-def}.
This completes the proof of part~\ref{thm:constants-i} of Theorem~\ref{thm:constants}.
For part~\ref{thm:constants-ii} of the theorem, 
we simply note from comparison of the strong law of large numbers in~\eqref{eq:lln-intrinsic-volumes} 
with~\eqref{eq:intrinsic-volume-lil} in the case $k=1$, we identify
that $\Lambda (d,1,\cL_Z) = \| \mu\|$.
\end{proof}

The proof of Theorem~\ref{thm:intrinsic-volume-lil} needs more work, because for $k \neq d$ the functional $V_k$ 
does not behave so nicely under the scaling operation $\psi_n$.
The idea is to use the Strassen-type result~Theorem~\ref{thm:strassen-drift-rw}
to obtain upper and lower bounds of matching order, and then conclude using a zero--one law. 
The next section presents this zero--one law (Theorem~\ref{thm:zero-one}), which is of some independent interest,
and then gives the proof of Theorem~\ref{thm:intrinsic-volume-lil}.

\section{A zero--one law for walks with drift}
\label{sec:zero-one}

Define $\cH_\infty := \cup_{n \in \ZP} \cH_n = \hull \{ S_0, S_1, S_2, \ldots \}$.
The event $\{ \cH_\infty = \R^d \}$, that the convex hull asymptotically
fills out all of space, has  probability $0$ or $1$ as a consequence of the Hewitt--Savage zero--one law (see~\cite[p.~335]{lhw}).
Theorem~3.1 of~\cite[p.~7]{mcrw} provides a zero--one law for tail events
associated with $\cH_0, \cH_1, \ldots$ in the case where $\Pr ( \cH_\infty = \R^d ) =1$; 
this follows from a Hewitt--Savage argument based on the fact that for every $k \in \N$ the initial trajectory $S_0, \ldots, S_k$
is eventually enclosed in $\cH_n$ for all $n$ large enough, with probability~1. 
In the case with $\Exp \| Z \| < \infty$,
one has either $\Pr ( \cH_\infty = \R^d ) = 1$ if $\mu = \0$ (Corollary~9.4 in~\cite{lhw}),
or $\Pr ( \cH_\infty = \R^d ) = 0$ if $\mu \neq \0$ (Proposition~4.2 and Theorem~9.2 in~\cite{lhw}).
In particular, in the case where~$\mu \neq \0$, the zero--one law of~\cite{mcrw} does not apply;
see~\cite[\S 9]{lhw} for some further remarks.

The purpose of this section is to provide a zero--one law that can be applied when~$\mu \neq \0$. In this case,
 one cannot hope for a zero--one law that applies to \emph{all}
tail events of the type studied in~\cite{mcrw},
because in the case $\mu \neq \0$, initial steps of the walk are rather likely to remain as vertices of the convex hull for all time, and hence retain influence on values of functionals 
(see Remark~\ref{rem:counterexample} below for one such example).
Our zero--one law will be stated for functionals acting on $\fCdo$, which we call \emph{macroscopic} (Definition~\ref{def:macroscopic} below), having the property that, roughly speaking, changes to the convex set around the origin have relatively negligible influence on the values of the functional when it is sufficiently large; hence the influence of the initial points is lost.

Write $A \sd B := (A \setminus B) \cup (B \setminus A)$ for the symmetric difference of sets $A$ and $B$.
Let $\cB_d (\0, r) := \{ x \in \R^d : \| x \| \leq r\}$ denote the closed $d$-dimensional Euclidean 
ball of radius $r \in \RP$ centred at the origin.
 
\begin{definition}
\label{def:macroscopic} 
We say $F : \fCdo \to \RP$ is \emph{macroscopic} if for every $\eps  \in (0,1)$ and $r \in \RP$
there exists $v \in \RP$ such that
\begin{equation}
\label{eq:macroscopic}
 \left| \frac{ F ( C_1 )}{ F ( C_2 )} - 1 \right| \leq \eps \text{ for all } C_1, C_2 \in \fCdo \text{ with } C_1 \sd C_2 \subseteq \cB_d (\0, r) \text{ and }  F (C_1) \geq v.
  \end{equation}
\end{definition}
\begin{remark}
\label{rem:macroscopic-1}
Note that if $F(C_2) \geq v$ and $| F(C_1) - F(C_2) | \leq \eps F(C_2)$, then $F(C_1) \geq (1-\eps) F(C_2) \geq (1-\eps) v$;
hence one can formulate~\eqref{eq:macroscopic} equivalently with the condition $F (C_2) \geq v$ rather than $F(C_1) \geq v$.
\end{remark}

The macroscopic property in Definition~\ref{def:macroscopic}
 lends itself to our present application (see Proposition~\ref{prop:stability} 
for an explanation), but is also general enough to include  the intrinsic volumes that are our main interest here, as well as a wide
range of other examples, 
as we indicate in the next remark.

\begin{remark}
\label{rem:macroscopic-2}
Let $F : \fKdo \to \RP$ (i.e., acting upon compact but not necessarily convex sets).
Then $F$ is \emph{monotone} if for every $A, B \in \fKdo$ with $A \subseteq B$, it holds that~$F (A) \leq F (B)$,
and $F$ is \emph{subadditive} if for every $A, B \in \fKdo$, $F ( A \cup B ) \leq F (A) + F (B)$.
Take $C_1, C_2 \in \fCdo$ (i.e., convex)
with $C_1 \sd C_2 \subseteq \cB_d (\0,r)$. Clearly $C_1 \subseteq (C_1 \cap C_2) \cup \hull (C_1 \setminus C_2)$,
while $C_1 \setminus C_2 \subseteq C_1$ and so (since $C_1$ is convex) $\hull (C_1 \setminus C_2) \subseteq C_1$.
Hence $C_1 = (C_1 \cap C_2) \cup \hull \cl (C_1 \setminus C_2)$, which expresses the convex compact set $C_1$ as the union of two
convex compact sets (`$\cl$' denotes closure). Then, if $F$ is monotone and subadditive, 
\begin{align}
\label{eq:valuation}
F ( C_1 ) 
& \leq F (C_1 \cap C_2) + F (\hull \cl (C_1 \setminus C_2 )) \leq F (C_2) + F ( \cB_d (\0,r)) .\end{align}
The similar argument with $C_1 , C_2$ interchanged shows that
if $F$ is both monotone and subadditive, then it satisfies the smoothness property
\begin{equation}
\label{eq:smoothness} 
\left| F (C_1) - F (C_2) \right| \leq F ( \cB_d (\0,r) ), \text{ for all } C_1 , C_2 \in \fCdo \text{ with } C_1 \sd C_2 \subseteq \cB_d (\0, r) .
\end{equation}
If~\eqref{eq:smoothness} holds, then
\[ \left| \frac{F (C_1)}{F(C_2)} - 1 \right| \leq \frac{F ( \cB_d (\0,r) )}{F(C_2)} , \text{ for all } C_1 , C_2 \in \fCdo \text{ with } C_1 \sd C_2 \subseteq \cB_d (\0, r) ,
\]
which implies
that $F$~is macroscopic (cf.~Remark~\ref{rem:macroscopic-1}). 
Moreover, observe that in  
the inequality~\eqref{eq:valuation}  the monotonicity and subadditivity of $F$ is required only on \emph{convex} sets;
in particular, it suffices that $F : \fCdo \to \RP$ be a monotone \emph{valuation} (cf.~\cite[p.~110]{gruber} or~\cite[p.~172]{schneider}).
Some particular examples are as follows. 
\begin{itemize}
\item Intrinsic volumes are monotone valuations, hence macroscopic on $\fCdo$.
\item Other monotone valuations include the rotation volumes and rigid motion volumes examined recently in~\cite{lk}.
\item The diameter function $A \mapsto \diam A$ is   monotone and subadditive on $\fKdo$
(for the latter, recall that on $\fKdo$ every set contains~$0$), hence macroscopic on $\fCdo$.
\item If $S_n$ is any transient random walk on $\Z^d$, the associated capacity $\capa : \fKdo \to \RP$ is given by $\capa (A) := \sum_{x \in A \cap \Z^d} \Pr  ( x + S_n \notin A , \text{ for all } n \in \N )$,
and $\capa$ is monotone and subadditive (see e.g.~\cite[\S 25]{spitzerbook}), hence macroscopic.
\item Fix $d =2$ and $\lambda >0$, and consider the function $F_\lambda (A)$  which  counts the 
number of faces of $A \in \fCdo$ whose length exceeds $\lambda$.
Then the function $A \mapsto F_\lambda ( \hull A )$ on $\fKdo$ is neither monotone nor subadditive. We believe $F_\lambda$ is nevertheless macroscopic, but we do not investigate this further here.
\end{itemize}
This list of examples shows that the class of macroscopic functionals contains functions
quite different in nature from intrinsic volumes.
\end{remark}

Here is the formulation of our zero--one law;
we emphasize that we do not assume
 that $\Exp ( \| Z \|^2 ) < \infty$.

\begin{theorem}[Zero--one law]
\label{thm:zero-one}
Let $d \geq 2$. 
Suppose that $\Exp \| Z \| < \infty$ and $\mu \neq \0$, and that~$F : (\fCdo, \rho_H) \to (\RP, \rho_E)$ is macroscopic (see Definition~\ref{def:macroscopic}). 
Suppose also that
\begin{equation}
\label{eq:G-diverges}
\lim_{n \to \infty} F (\cH_n ) = +\infty, \as
\end{equation}
Then~$F ( \cH_n)$ satisfies a zero--one law
in the sense that for every sequence $b_n \in (0,\infty)$, 
\begin{equation}
\label{eq:G-limsup}
\limsup_{n \to \infty} \frac{F (\cH_n)}{b_n} \text{ is a.s.~constant in } [0,+\infty]. \end{equation}
\end{theorem}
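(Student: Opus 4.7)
My plan is to apply the Hewitt--Savage zero--one law to the random variable $L := \limsup_{n \to \infty} F(\cH_n)/b_n$. By~\eqref{eq:G-diverges}, $L$ takes values in $[0, +\infty]$ and is measurable with respect to $\sigma(Z_1, Z_2, \ldots)$. To invoke Hewitt--Savage, I would show that $L$ is a.s.~invariant under every finite permutation of the i.i.d.~sequence $(Z_i)_{i \geq 1}$; for each $\alpha \in \R$ the event $\{L \leq \alpha\}$ is then permutable, hence has probability $0$ or $1$, so $L$ is a.s.~constant and~\eqref{eq:G-limsup} follows.

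Fix $m \in \N$ and a permutation $\pi$ of $\{1, \ldots, m\}$, and let $(S_n^\pi)$, $(\cH_n^\pi)$ denote the permuted walk (with $Z_i^\pi := Z_{\pi(i)}$ for $i \leq m$ and $Z_i^\pi := Z_i$ for $i > m$) and its convex hull. The salient observation is that $S_n^\pi = S_n$ for every $n \geq m$, so $\cH_n$ and $\cH_n^\pi$ share all their vertices $\{S_j : j \geq m\}$; they differ only through their initial vertices $\{S_0, \ldots, S_{m-1}\}$ versus $\{S_0^\pi, \ldots, S_{m-1}^\pi\}$, all of which lie in a common random ball $\cB_d(\0, D)$ for some a.s.~finite $D = D(\pi, \omega)$. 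It then suffices to prove that $F(\cH_n)/F(\cH_n^\pi) \to 1$ a.s., from which $L = L^\pi$ a.s.

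To establish the ratio convergence I would compare both $\cH_n$ and $\cH_n^\pi$ to the common ``inner core'' $\cC_n := \hull(\{\0\} \cup \{S_m, S_{m+1}, \ldots, S_n\})$, which is contained in both by construction, and verify $F(\cH_n)/F(\cC_n) \to 1$ (the same argument applied to $\cH_n^\pi$ gives $F(\cH_n^\pi)/F(\cC_n) \to 1$). The hard part is precisely this latter step: adjoining an initial vertex $p \in \cB_d(\0, D)$ to $\cC_n$ generates a ``fan''-shaped extension that stretches from the vicinity of $p$ towards the far tail of the walk, so the geometric symmetric difference $\cH_n \sd \cC_n$ is \emph{not} contained in any fixed ball $\cB_d(\0, r)$ and the macroscopic hypothesis cannot be applied directly to the pair $(\cC_n, \cH_n)$. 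The plan to overcome this is to append the initial vertices one at a time and exploit the drift geometry: by the strong law of large numbers $S_n/n \to \mu \neq \0$, the core $\cC_n$ elongates linearly in the $\hat\mu$-direction while its perpendicular cross-section grows at a slower but diverging rate, so that the ``fan'' contribution of each bounded-norm initial vertex---via a careful iterated application of Definition~\ref{def:macroscopic} at scales calibrated to the growing perpendicular cross-section, together with $F(\cH_n) \to \infty$ from~\eqref{eq:G-diverges}---is asymptotically negligible compared with $F(\cC_n)$. Iterating over the $m-1$ initial vertices yields $F(\cH_n)/F(\cC_n) \to 1$ a.s., and Hewitt--Savage then completes the argument.
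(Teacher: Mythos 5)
Your overall architecture matches the paper's: compare $\cH_n$ with the core hull $\cC_n = \hull(\{\0\}\cup\{ S_m, \ldots, S_n\})$ (the paper's $\cH_{m,n}$), use the macroscopic property to transfer the $\limsup$, and invoke Hewitt--Savage (your formulation via permutation invariance of $L$ is equivalent to the paper's use of the $k$-permutable $\sigma$-algebras). The gap is in the step you yourself flag as the hard part. You assert that $\cH_n \sd \cC_n$ is \emph{not} contained in any fixed ball, and propose to compensate by applying Definition~\ref{def:macroscopic} ``at scales calibrated to the growing perpendicular cross-section''. That mechanism is incompatible with the definition: for each radius $r$ the macroscopic property supplies a threshold $v = v(\eps, r)$, and if you take $r = r_n \to \infty$ then $v_n$ may diverge arbitrarily fast, so hypothesis~\eqref{eq:G-diverges} gives no control over whether $F(\cH_n) \geq v_n$. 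As written, the ratio convergence $F(\cH_n)/F(\cC_n) \to 1$ does not follow, and this is precisely the load-bearing step.

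The fact you are missing --- the content of the paper's Proposition~\ref{prop:stability} and Lemma~\ref{lem:cylinders} --- is that the symmetric difference \emph{is} contained in a single random ball $\cB_d(\0, R_m)$, uniformly in $n$ and in the permutation; your worry about an unbounded ``fan'' turns out to be unfounded. The reason is the drift geometry you gesture at, pushed one step further: the projection of the walk onto the hyperplane orthogonal to $\mu$ is a mean-zero walk in $\R^{d-1}$ whose convex hull a.s.\ exhausts $\R^{d-1}$, so at some a.s.-finite distance $x$ along $\hat\mu$ the core eventually contains a cross-sectional disk of radius exceeding $2T$, where $T = \sum_{i \leq m} \|Z_i\|$ dominates the positions of all permuted initial vertices. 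Combined with the fact that the walk eventually remains inside a narrow cone about the $\hat\mu$-axis, every face of the hull that uses an initial vertex is trapped behind that disk, hence inside a fixed ball; for the finitely many earlier times the whole hull is bounded anyway. With that lemma, a single application of Definition~\ref{def:macroscopic} at the fixed radius $R_m$ finishes the proof; without it, your argument is incomplete.
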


\begin{remark}
\label{rem:counterexample}
The condition~\eqref{eq:G-diverges} cannot be removed, in general. To see this, suppose that  $\Exp \| Z \| < \infty$ and $\mu \neq \0$, and
consider the functional $F ( A ) = -\inf_{x \in A} ( \mu^\tra x )$.
Then $F : \fKdo \to \RP$ is  monotone and subadditive, hence macroscopic (see Remark~\ref{rem:macroscopic-2}).
However, the  random variables $Y_n := F(\cH_n)$ satisfy~$\lim_{n \to \infty} Y_n = Y_\infty := -\inf_{x \in \cH_\infty} ( \mu^\tra x )$, a.s.,
which is just $-\inf_{n \in \ZP} T_n$ where $T_n = \mu^\tra S_n$ is a one-dimensional random walk with strictly positive drift. 
As long as $T_n$ is non-degenerate, $Y_\infty$ therefore has a non-trivial distribution, so~\eqref{eq:G-limsup} is violated (for $b_n \equiv b$, constant),
and~\eqref{eq:G-diverges} fails. 
Note that the event $\{ Y_\infty \leq y \}$ is also a tail event for $\cH_n$ in the sense of~\cite[p.~7]{mcrw}.
\end{remark}

For a $d$-dimensional real matrix $M$, the matrix (operator) norm induced by the Euclidean norm is
 $\| M \|_{\rm op} := \sup_{u \in \Sp{d-1}} \| M u \|$.
We need a short calculation.
 For any absolutely continuous $f \in \cCdo$ and any non-negative definite, $d$-dimensional matrix $M$,
we have, adapting~\cite[p.~387]{khoshnevisan},  
\[ M f(t) - M f(s) =  \int_s^t ( M f'(u) ) \ud u, \]
and hence, by the triangle and Jensen inequalities,
\begin{equation}
\label{eq:diameter-inequality}
 \sup_{0 \leq s \leq t \leq 1} \| M f(t) - M f(s) \| 
\leq  \| M \|_{\rm op} \int_0^1 \|  f' (u) \| \ud u 
\leq \| M \|_{\rm op} \sqrt{ \int_0^1 \|  f' (u) \|^2 \ud u  } .\end{equation}
Now we can complete the proof of our LIL for intrinsic volumes, Theorem~\ref{thm:intrinsic-volume-lil}.

\begin{proof}[Proof of Theorem~\ref{thm:intrinsic-volume-lil}.]
Let $k \in \{1,\ldots,d\}$. It suffices to suppose that $S_n$ is genuinely $d$-dimensional (for if not, work instead in $\R^{d'}$ for $d' < d$);
hence $\det \Sigmap >0$. 
We will prove that there exist constants $0 < c_1 < c_2 < \infty$
(where $c_1, c_2$   depend only on $d, k, \mu , \Sigma$) for which
\begin{equation}
\label{eq:intrinsic-volume-lil-3}
c_1 \leq \limsup_{n \to \infty} \frac{V_k (\cH_n)}{\sqrt{n^{k+1} (\log \log n)^{k-1} }} \leq c_2, \as 
\end{equation}
Without loss of generality, choose the basis of $\R^d$ so that $\hat \mu$ is the first coordinate vector.
For $r \in \RP$, define the rectangle $R_d (x, h ; r) := [x, x+h ] \times [ -r, r ]^{d-1}$.
We will apply Theorem~\ref{thm:strassen-drift-rw}, which says that 
$\psi_n ( Y_n )$ in $\cCdo$ is relatively compact, and   its set of limit points is~$W_{d,\mu,\Sigma}$.
Consider $h = ( I_\mu, \Sigmap^{1/2} f )$ with $f \in U_{d-1}$ (so that $h \in W_{d,\mu,\Sigma}$).
Then~\eqref{eq:diameter-inequality} applied to $f \in U_{d-1}$ and with $M =  \Sigmap^{1/2}$
shows that $\sup_{f \in U_{d-1} } \diam ( \Sigmap^{1/2} f [0,1] )$ is bounded by a finite constant.
This means that there exists 
 $r_1 \in \RP$ such that
$\hull h[0,1] \subset R_d (0, \| \mu \| ; r_1)$ for every $h \in W_{d,\mu,\Sigma}$.
Since $W_{d,\mu,\Sigma}$ is the set of limit points for $\psi_n ( Y_n )$, it follows that for every $r > r_1$
there exists a (random, a.s.-finite) $n_0$ such that $\psi_n ( Y_n ) \subseteq R_d ( -1, 2+ \| \mu\| ; r)$
for all $n \geq n_0$, say. Hence by~\eqref{eq:psi-def} we conclude that, a.s.,
\begin{equation}
\label{eq:hull-upper-bound}
 \cH_n \subseteq R_d ( -n, (2+\| \mu \|) n ; r \ell (n) ), \text{ for all but finitely many } n \in \ZP.\end{equation}
On the other hand, it is not hard to see that, provided $\det \Sigmap >0$,
 there exist $r_0 >0$, $\delta_0 \in (0,1/2)$ and some $h \in W_{d,\mu,\Sigma}$ for which
$R_d ( \delta_0 , 1- 2\delta_0 ; r_0 ) \subseteq \hull h [0,1]$. Thus, from Theorem~\ref{thm:strassen-drift-rw},
 for every $\delta \in (\delta_0,1/2)$ and $r \in (0,r_0)$   we have that, a.s.,
\begin{equation}
\label{eq:hull-lower-bound}
 \cH_n \supseteq R_d ( \delta n, n - 2 \delta n  ; r \ell (n) ), \text{ for infinitely many } n \in \ZP.\end{equation}
Finally, the claim~\eqref{eq:intrinsic-volume-lil-3} follows from~\eqref{eq:hull-upper-bound}
and~\eqref{eq:hull-lower-bound} together with the fact that
\begin{equation}
\label{eq:rectangle-volumes}
V_k ( R_d ( x , h ; r ) )= e_{d,k} ( h , 2r, 2r, \ldots, 2 r) = \binom{d-1}{k-1} h (2r)^{k-1} + \binom{d-1}{k} (2r)^k,
\end{equation}
where $e_{d,k}$ is the $k$th elementary symmetric polynomial in $d$ arguments;
one can find~\eqref{eq:rectangle-volumes} as Proposition 5.5 in~\cite{lmnpt}.
This completes the proof of~\eqref{eq:intrinsic-volume-lil-3}.

A consequence of~\eqref{eq:intrinsic-volume-lil-3} is that $\lim_{n \to \infty} V_k ( \cH_n ) = +\infty$, a.s.
By the zero--one law (Theorem~\ref{thm:zero-one}) and the fact that the intrinsic volume functional $V_k$ is macroscopic (see Remark~\ref{rem:macroscopic-2}),
we obtain that there is a constant $\Lambda \in [0,+\infty]$
for which
\begin{equation}
\label{eq:intrinsic-volume-lil-2}
 \limsup_{n \to \infty} \frac{V_k (\cH_n)}{\sqrt{2^{k-1} n^{k+1} (\log \log n)^{k-1} }} = \Lambda, \as 
\end{equation}
Then~\eqref{eq:intrinsic-volume-lil-2} 
combined with~\eqref{eq:intrinsic-volume-lil-3}
shows that $0 < \Lambda < \infty$ (recalling that we have assumed that $S_n$ is genuinely $d$-dimensional).
This completes the proof of~\eqref{eq:intrinsic-volume-lil}.
\end{proof}

The rest of this section is devoted to the proof of Theorem~\ref{thm:zero-one}.
For $k \in \ZP$, let $\Pi_k$ denote the set of all $\pi : \N \to \N$ such that $\pi$ is a permutation on $1,2,\ldots,k$ and $\pi (n) = n$ for all $n > k$.
Then the random walk $S^\pi$ (for $\pi \in \Pi_k$) defined by $S^\pi_0 := \0$ and $S^\pi_n := \sum_{i=1}^n Z_{\pi(i)}$, $n \in \N$,
takes the same increments as $S$, but with a permutation among the first~$k$; note that $S^\pi_n = S_n$ for all $n \geq k$.
Let $\cH^\pi_n := \hull \{ S^\pi_0, \ldots, S^\pi_n \}$.

For $\ell, n \in \ZP$, define 
\begin{equation}
\label{eq:H-k-n-def}
\cH_{\ell,n} := \hull \{ \0, S_\ell, S_{\ell+1} , \ldots, S_n \} \text{ for $n \geq \ell$, and } \cH_{\ell,n} := \{ \0 \} \text{ for } n < \ell,
\end{equation}
and also define
\begin{equation}
\label{eq:H-k-inf-def}
\cH_{\ell,\infty} :=  \bigcup_{n \in \ZP} \cH_{\ell,n} = \hull \{ \0, S_\ell, S_{\ell+1}, \ldots \} .
\end{equation}
Define $\cH^\pi_{\ell,n}$, $n \in \ZP \cup \{ \infty\}$, analogously 
to~\eqref{eq:H-k-n-def}--\eqref{eq:H-k-inf-def}
in terms of $S^\pi$. Then, for every $k \leq \ell$ and all $\pi \in \Pi_k$, it holds that
$\cH_{\ell,n} = \cH^\pi_{\ell,n} \subseteq \cH^\pi_n$ for all $n \in \ZP \cup \{ \infty\}$.

The next proposition 
shows that for every $k \in \ZP$,
  every permutation of $Z_1, \ldots, Z_k$ changes the convex hull $\cH_n$
only within a finite region (with a random radius that depends on $k$ but not on $n$). Indeed,
 it follows from~\eqref{eq:l-k-interpretation} below 
that
\begin{equation}
\label{eq:l-k-interpretation-2}
  \cH_n^{\pi_1} \sd \cH_n^{\pi_2} \subseteq \cB_d (\0, R_k), \text{ for all } n \in \ZP \text{ and all } \pi_1, \pi_2 \in \Pi_k, \as
\end{equation}
In particular, this means that macroscopic functionals 
are asymptotically approximately invariant under $\Pi_k$. This property is the key to our proof of Theorem~\ref{thm:zero-one}.
Theorem~6.2.2 of McRedmond~\cite{mcr}  established a  
result along somewhat similar lines for the space-time convex hull of a one-dimensional random walk.

\begin{proposition}
\label{prop:stability}
Let $d \geq 2$. 
Suppose that $\Exp \| Z \| < \infty$ and $\mu \neq \0$. 
A.s., it holds for every $k \in \ZP$ that
there exists $R_k < \infty$ for which
\begin{equation}
\label{eq:l-k-interpretation}
  \cH_n^\pi \setminus \cH_{k,n}^\pi \subseteq  \cB_d  (\0, R_k), \text{ for all } n \in \ZP \text{ and all } \pi \in \Pi_k.
\end{equation}
\end{proposition}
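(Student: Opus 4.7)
Plan: I will show that the excess $\cH_n^\pi \setminus \cH_{k,n}^\pi$ is confined to a bounded neighbourhood of the origin by first isolating the bounded contribution of the perturbed initial points $S_1^\pi, \ldots, S_{k-1}^\pi$, and then controlling the wedge that these can add to the ever-growing convex hull $\cH_{k,n}$ of the tail of the walk. Set $M_k := \sup_{\pi \in \Pi_k,\, 0 \leq i \leq k-1} \|S_i^\pi\|$, which is a.s.\ finite. For $n < k$, $\cH_{k,n}^\pi = \{\0\}$ and $\cH_n^\pi \subseteq B(\0, M_k)$, so the inclusion holds trivially; from now on focus on $n \geq k$, where $\cH_{k,n}^\pi = \cH_{k,n}$ is independent of $\pi$ and $\cH_n^\pi = \hull(\cH_{k,n} \cup \{S_1^\pi, \ldots, S_{k-1}^\pi\})$. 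One may reduce to the genuinely $d$-dimensional case by working inside the linear subspace spanned by the increments; when this subspace is $1$-dimensional the conclusion is immediate, using that $\mu \neq \0$ ensures $\sup_n (-S_n \cdot \hat\mu) < \infty$ a.s.

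The first main step is a support-function argument on the closed forward hemisphere $V := \{v \in \Sp{d-1} : v \cdot \hat\mu \geq 0\}$. On the open forward hemisphere the SLLN gives $S_n \cdot v \to \infty$; on the equator $\{v \in \Sp{d-1} : v \cdot \hat\mu = 0\}$ the $1$-dimensional mean-zero random walk $(S_n \cdot v)_n$ is recurrent by Chung--Fuchs (which requires only $\Exp \|Z\| < \infty$), so $\sup_n S_n \cdot v = \infty$ a.s.\ for each such $v$. A Dini-type compactness argument---using that the support function $v \mapsto h(v, \cH_{k,n})$ is continuous on the compact set $V$, monotone non-decreasing in $n$, and diverges pointwise to $+\infty$---then produces a random time $N^\star(\omega)$, a.s.\ finite, with $h(v, \cH_{k,n}) > M_k$ for every $v \in V$ and every $n \geq N^\star$. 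Consequently, for such $n$ the perturbed points (of norm at most $M_k$) cannot contribute to the support function on $V$, so $h(v, \cH_n^\pi) = h(v, \cH_{k,n})$ for $v \in V$; while on the open backward hemisphere $\{v \in \Sp{d-1} : v \cdot \hat\mu < 0\}$, the walk $(S_n \cdot v)$ has negative drift and $\sup_{i \geq k} S_i \cdot v$ is a.s.\ finite, giving $h(v, \cH_n^\pi) \leq \max(M_k, \sup_{i \geq k} S_i \cdot v)$ uniformly in $n$.

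The second step is a geometric tangent-line argument, which I expect to be the main obstacle. Any $x \in \cH_n^\pi \setminus \cH_{k,n}$ has a representation $x = \lambda p + (1-\lambda) q$ with $p \in \hull\{\0, S_1^\pi, \ldots, S_{k-1}^\pi\} \subseteq B(\0, M_k)$, $q \in \cH_{k,n}$ and $\lambda \in (0,1]$. When $\lambda = 1$, $\|x\| \leq M_k$; otherwise $p \notin \cH_{k,n}$ (else $x \in \cH_{k,n}$), and walking along the segment $[q, p]$ one finds an exit point $r \in \partial \cH_{k,n}$ with $x \in [r, p]$ and $\|x\| \leq \max(\|r\|, M_k)$. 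The proof thus reduces to bounding $\|r\|$ uniformly in $n, \pi, p$ and $q$; equivalently, one must show that the boundary of $\cH_{k,n}$ visible from any $p \in B(\0, M_k)$ lies in a ball whose radius depends on $\omega$ but not on $n$. To achieve this I would combine two ingredients: (i)~the positive drift yields $\inf_{i \geq N} S_i \cdot \hat\mu \to \infty$ as $N \to \infty$, so only finitely many walk points lie in any fixed slab near the origin and the near-origin portion of $\partial \cH_{k,n}$ stabilises for $n$ large; and (ii)~by Step~1, any separating hyperplane between $p$ and $\cH_{k,n}$ has normal in the backward hemisphere, so the tangent points from $p$ correspond to maximisers of $S_i \cdot w$ over $w$ in a closed subset of the backward hemisphere, and these maximisers are attained at a bounded set of indices by the negative drift. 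Combining these with the trivial bound $\cH_n^\pi \subseteq B(\0, \sum_{j \leq N^\star}\|Z_j\| + M_k)$ for $n < N^\star$ and taking $R_k$ to be the maximum of the two bounds yields the conclusion.
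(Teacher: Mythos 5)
Your overall strategy is sound and, once unwound, is essentially the paper's: your Step~1 plays the role of Lemma~\ref{lem:cylinders} (the hull $\cH_{k,n}$ of the tail eventually dominates the initial points in every direction of the closed forward hemisphere), and your Step~2 is a support-function rendering of the paper's cone argument (no face of $\cH_n^\pi$ with a sufficiently ``backward'' normal can contain a late walk point, because $S_i \cdot w \to -\infty$ uniformly over $\{ w \in \Sp{d-1} : w \cdot \hat\mu \leq -\delta\}$ by the strong law). Step~2 as written is only a sketch, but it is completable: given Step~1 and the continuity of $v \mapsto h(v, \cH_{k,N^\star})$, the outer normals $w$ at the relevant exit points $r$ satisfy $h(w,\cH_{k,n}) \leq \| p \| \leq M_k$ and hence lie in $\{ w : w \cdot \hat\mu \leq -\delta \}$ for a random $\delta>0$ not depending on $n$; the uniform negative drift in those directions then confines the corresponding faces to $\hull\{\0, S_k, \ldots, S_{m_0}\}$ for a finite random $m_0$. (The ``stabilisation of the near-origin portion of $\partial \cH_{k,n}$'' in your item~(i) is not literally true --- faces near the origin can keep tilting as distant vertices are added --- but it is not needed once item~(ii) is carried out in this form.)

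The genuine gap is in Step~1, at the equator. Chung--Fuchs gives, for each \emph{fixed} $v$ with $v \cdot \hat\mu = 0$, that $\sup_n S_n \cdot v = +\infty$ a.s.; but your Dini argument requires the statement ``a.s., for \emph{all} $v$ in the closed forward hemisphere simultaneously, $h(v, \cH_{k,n}) \to \infty$'', and the interchange of quantifiers is not legitimate: the exceptional null set depends on $v$, and passing to a countable dense set of directions does not suffice, because the support function of a convex set can be finite at a single direction and infinite at all others (consider a closed half-space). In fact, ``$\sup_{i \geq k} S_i \cdot v = \infty$ for every equatorial $v$'' is equivalent to the projected hull $\cH^\per_{k,\infty}$ being all of $\R^{d-1}$, which is exactly the non-trivial input (Corollary~9.4 of~\cite{lhw}) that the proof of Lemma~\ref{lem:cylinders} invokes. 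This does hold, but it needs an argument (e.g.\ Chung--Fuchs combined with a Hewitt--Savage and compactness argument) or a citation, and your proof as written supplies neither. With that ingredient in place, the remainder of your outline goes through and yields the proposition by the same mechanism as the paper's proof.
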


A crucial geometrical ingredient in the proof of Proposition~\ref{prop:stability} is provided by the next lemma,
which says that $\cH_n$ eventually contains cylinders with axis in the $\hat \mu$ direction of arbitrary radius and height.
Define the cylinder $C_{d} (x,h ; r) := [x,x+h] \times \cB_{d-1} (\0, r) \subset \R^d$.
Denote $\bbO_\mu := \{ u \in \Sp{d-1} : u^\tra \mu = 0\}$
for the sphere (a copy of $\Sp{d-2}$) orthogonal to $\mu$. 

\begin{lemma}
\label{lem:cylinders}
Let $d \geq 2$. 
Suppose that $\Exp \| Z \| < \infty$ and $\mu \neq \0$. 
Fix $r , h \in \RP$ and $k \in \N$. Then, a.s., there exists $x \in \RP$ such that,
for all but finitely many $n \in \ZP$, $C_d ( x, h ; r ) \subseteq \cH_{k, n}$.
\end{lemma}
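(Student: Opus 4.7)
The plan is as follows. Since $\cH_{k,n}$ is non-decreasing in $n$, it suffices to exhibit almost surely some finite random time $n^\ast$ together with an $x$ for which $C_d(x, h; r) \subseteq \cH_{k, n^\ast}$; the same cylinder then lies in every $\cH_{k,n}$ with $n \geq n^\ast$. Choose coordinates so that $\hat \mu = e_1$ and write $S_n = (\alpha_n, \beta_n) \in \R \times \R^{d-1}$ with $\alpha_n := e_1^\tra S_n$ and $\beta_n$ the remaining $(d-1)$ coordinates. Then $\Exp \alpha_1 = \| \mu \| > 0$ and $\Exp \beta_1 = \0$, and the SLLN gives $\alpha_n/n \to \| \mu \|$ and $\beta_n/n \to \0$ almost surely, so in particular $\alpha_n \to +\infty$.

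The key input is the claim that, almost surely, the orthogonal hull $\hull \{ \beta_k, \beta_{k+1}, \ldots, \beta_N \}$ eventually contains the ball $\cB_{d-1}(\0, R)$ for any prescribed $R > 0$, provided $S_n$ is genuinely $d$-dimensional (otherwise the argument restricts to the effective affine span). For each unit direction $u \in \Sp{d-2}$, the one-dimensional walk $u^\tra \beta_n$ has zero mean and finite first absolute moment, so by Chung--Fuchs it is recurrent, hence $\limsup_n u^\tra \beta_n = +\infty$ almost surely. Applying this in turn to each direction of a $\delta$-net $\{u_1, \ldots, u_M\}$ of $\Sp{d-2}$ produces almost-surely-finite times $k \leq n_1 < \ldots < n_M$ with $u_j^\tra \beta_{n_j} \geq R/(1-\delta)$; a standard support-function estimate then yields $\hull \{ \beta_{n_1}, \ldots, \beta_{n_M} \} \supseteq \cB_{d-1}(\0, R)$.

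Taking $R = 10r$, choose such an $N$ and set $X := \max_{k \leq j \leq N} \alpha_j$. By the SLLN, pick $n^\ast > N$ with $\alpha_{n^\ast} \geq 10 X$ and $\| \beta_{n^\ast} \| \leq r$. The polytope $\cP := \hull \{ \0, S_k, S_{k+1}, \ldots, S_N, S_{n^\ast} \}$ is contained in $\cH_{k, n^\ast}$; viewed as the cone with apex $S_{n^\ast}$ over the base $\hull \{ \0, S_k, \ldots, S_N \}$, its cross-section at first coordinate $\xi \in (X, \alpha_{n^\ast})$ is the linear interpolation of the base's orthogonal slice (which contains $\cB_{d-1}(\0, R)$) toward $\beta_{n^\ast}$. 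A short convex-geometric computation shows that for $\xi$ in a suitable sub-interval of length $h$, this cross-section contains $\cB_{d-1}(\0, r)$: the cone's contraction toward the apex scales the base radius by at most a factor of ten, while the shift by $\beta_{n^\ast}$ contributes at most $r$, and the safety factor $R/r = 10$ absorbs both.

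The principal obstacle is the uniform directional recurrence in the second paragraph: under only $\Exp \|Z\| < \infty$, the $(d-1)$-dimensional orthogonal walk need not itself be recurrent when $d \geq 3$, so one must assemble the ball-containing hull from one-dimensional Chung--Fuchs recurrence in each direction of a finite net, combined with a quantitative convex-geometric passage from a finite point cloud to a Euclidean ball.
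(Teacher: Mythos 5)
Your reduction to exhibiting a single a.s.-finite time $n^\ast$ with $C_d(x,h;r)\subseteq\cH_{k,n^\ast}$ is correct and matches the paper's strategy, but two of your steps fail as written. The first concerns your ``key input'', namely that $\hull\{\beta_k,\dots,\beta_N\}$ eventually contains $\cB_{d-1}(\0,R)$. This is precisely what the paper imports from Corollary~9.4 of~\cite{lhw}, and your attempted derivation from one-dimensional Chung--Fuchs recurrence plus a $\delta$-net does not work: knowing $u_j^\tra\beta_{n_j}\ge R/(1-\delta)$ for each direction $u_j$ of a finite net does \emph{not} force the hull to contain a ball of radius $R$, because each $\beta_{n_j}$ may have an arbitrarily large component orthogonal to $u_j$. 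The support-function perturbation you would need is $h_K(u)\ge h_K(u_j)-\delta\max_j\|\beta_{n_j}\|$, and you have no control on $\max_j\|\beta_{n_j}\|$. Concretely, in $\R^2$ the points $(R',M)$, $(M,R')$, $(-R',M)$, $(M,-R')$ have $u_j^\tra p_j=R'$ for the coordinate net, yet for large $M$ their convex hull together with $\0$ lies in the half-plane $\{(x,y):y\ge -R'x/M\}$, whose boundary passes through the origin, so it contains no ball around $\0$. This genuinely $d$-dimensional covering fact therefore requires the argument of~\cite{lhw} (or an equivalent), not a net.

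The second gap is the choice of $n^\ast$ with $\|\beta_{n^\ast}\|\le r$: the SLLN only gives $\|\beta_n\|=o(n)$, and for $d\ge 4$ with genuinely $d$-dimensional increments the orthogonal walk is transient, so $\|\beta_n\|\to\infty$ a.s.\ and only finitely many $n$ satisfy $\|\beta_n\|\le r$, while $\alpha_{n^\ast}\ge 10X$ forces $n^\ast$ to be large; generically no admissible $n^\ast$ exists. This is repairable (demand $\|\beta_{n^\ast}\|\le\delta\alpha_{n^\ast}$ with $\delta$ chosen small after $X$, $h$, $r$ are fixed, which the SLLN does give, and propagate the extra translation through the cone estimate). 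Note also that the cross-section of your cone at first coordinate $\xi$ is not ``$(1-t)$ times a slice of the base plus $t\beta_{n^\ast}$'' for a single $t$: base points $(\xi_0,y_0)$ interpolate with different parameters $t=(\xi-\xi_0)/(\alpha_{n^\ast}-\xi_0)$, and the base's \emph{slices} need not contain $\cB_{d-1}(\0,R)$ even though its orthogonal \emph{projection} does; one can still conclude via the support function of the convex cross-section, but that computation has to be carried out. The paper avoids the cone altogether: for each $u\in\bbO_\mu$ it produces one point $s^-\hat\mu+ru$ of the hull with $s^-\le x$ (from an early stretch of the walk) and one point $s^+\hat\mu+ru$ with $s^+\ge x+h$ (from a later stretch whose orthogonal hull again contains a ball of radius exceeding $r$), and the cylinder is then swept out by the connecting segments.
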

\begin{proof}
For $x \in \R^d$, define $x^\per \in \R^{d-1}$ to be the projection of~$x$
onto the $(d-1)$-dimensional subspace orthogonal to $\mu$, 
and then set
 $\cH^\per_{k,n} := \hull \{ 0, S_k^\per, S_{k+1}^\per , \ldots, S_n^{\per} \}$ for $n \geq k$.
Here $S^\per = ( S^\per_n )_{n \in \ZP}$ is a random walk on $\R^{d-1}$
with mean drift $\Exp [ Z^\per ] = \0$ in $\R^{d-1}$. 
Corollary~9.4 in~\cite{lhw} implies that
$\cH^\per_{k, \infty} := \cup_{n \geq k} \cH^\per_{k,n}$ satisfies $\Pr ( \cH^\per_{k, \infty} = \R^{d-1} ) = 1$ for all $k \in \ZP$.

Fix $r , h \in \RP$ and $k \in \N$. Then there exists an a.s.-finite $n_0 \in \ZP$ (a stopping time) such that
$\cB_{d-1} (\0, r) \subseteq \cH^\per_{k,n}$ for all $n \geq n_0$.
Then set $x := \max_{0 \leq n \leq n_0} \hat \mu^\tra S_n$ (so $x \in \RP$ is a.s.~finite).
Since, by the strong law of large numbers,
 $\lim_{n \to \infty} \hat \mu^\tra S_n = \infty$, a.s.,
there exists an a.s.-finite $n_1 \geq n_0$ such that $\hat\mu^\tra S_n > x + h$ for all $n \geq n_1$.
There also exists an a.s.-finite $r_1 > r$ such that
$\cH^\per_{k,n_1} \subseteq \cB_{d-1} (\0, r_1)$. 
Furthermore, there exists an a.s.-finite $n_2 \geq n_1$ such that $\cB_{d-1} (\0, 1+r_1 ) \subseteq \cH^\per_{k,n}$ for all $n \geq n_2$.
Therefore, $\cH^\per_{k,n_1}$ is contained in the interior of $\cH^\per_{k,n_2}$,
and so $\cH^\per_{k,n_2} = \hull \{ S^\per_{n_1 +1}, S^\per_{n_1+2}, \ldots, S^\per_{n_2} \}$.

The preceding argument reveals two properties of $\cH_{k,n_2}$. First,
since $\cB_{d-1} (\0, r) \subseteq \cH^\per_{k,n_0} \subseteq \cH^\per_{k,n_2}$ and every $z \in \cH^\per_{k,n_0}$ has $\hat \mu^\per z \leq x$,
it holds that
\begin{equation}
\label{eq:cylinder-left-end}
\text{ for every }  u \in \bbO_\mu, \text{ there exists } s \leq x \text{ such that } s \hat \mu + r u \in \cH_{k,n_2}.
\end{equation}
Second, since $\cB_{d-1} (\0, 1+r_1 ) \subseteq \cH^\per_{k,n_2} = \hull \{ S^\per_{n_1 +1}, S^\per_{n_1+2}, \ldots, S^\per_{n_2} \}$,
where $r_1 > r$, 
and every $n \geq n_1$ has  $\hat \mu^\per S_n \geq x+h$,
it holds that
\begin{equation}
\label{eq:cylinder-right-end}
\text{ for every }  u \in \bbO_\mu, \text{ there exists } s \geq x+h \text{ such that } s \hat \mu + r u \in \cH_{k,n_2}.
\end{equation}
It follows from~\eqref{eq:cylinder-left-end} and~\eqref{eq:cylinder-right-end}, by convexity, that $C_d ( x, h ; r ) \subseteq \cH_{k,n_2}$.
\end{proof}

\begin{proof}[Proof of Proposition~\ref{prop:stability}.]
Fix $k \in \ZP$ (which we suppress in some of the subsequent notation).
For $n \in \N \cup \{ \infty \}$, $x \in \RP$ and $u \in \bbO_\mu$, define
\[ r_n (x , u) := \sup \{ r \in \RP : x \hat \mu +  u r \in \cH_{k,n} \} ,\]
where $\cH_{k,n}$ is given by~\eqref{eq:H-k-inf-def}.
Furthermore, for $r \in \RP$ define
\[ X_r := \inf \bigl\{ x \in \RP : \inf_{u \in \bbO_\mu} r_\infty (x,u) \geq r \bigr\} .\]
A consequence of Lemma~\ref{lem:cylinders} is that
\begin{equation}
\label{eq:cross-section-time}
\Pr ( X_r < \infty ) = 1, \text{ for every } r \in \RP.
\end{equation}
Since, for fixed $k$, $\cH_{k,n}$ is a non-decreasing sequence (in $n$) of compact sets,
if $X_r < \infty$, there is a (random but a.s.~finite) time $\tau_r >k$ such that, for every $n \geq \tau_r$, the convex hull $\cH_{k,n}$ contains $X_r \hat \mu + r \bbO_\mu$, a sphere
in the $(d-1)$-dimensional hyperplane orthogonal to $\mu$ at distance $X_r$ from the origin in the $\hat \mu$ direction.
In particular, $r_n ( X_r , u ) \geq r$ for
all $n \geq \tau_r$ and all $u \in \bbO_\mu$.

Set
$T :=  \sum_{i=1}^{k}  \| Z_i \|$.
Consider the (a.s.-finite) $X_{2T}$ and associated time $\tau_{2T} > k$ with the property described in the preceding paragraph. 
Then $\cH_{k,n} \subseteq \cH_n$ 
contains the sphere $X_{2T} \hat \mu + 2T \bbO_\mu$ for every $n \geq \tau_{2T}$.
Moreover, the random variables $X_{2T}$ and $\tau_{2T}$ are invariant
under permutations $\pi \in \Pi_k$ of $Z_1,\ldots, Z_k$
(since $T$ is invariant, and $\tau_r$ is defined in terms of $\cH_{k,n}$).
Hence $\cH^\pi_n$ 
contains the sphere $X_{2T} \hat \mu + 2T \bbO_\mu$ for every $n \geq \tau_{2T}$ and every $\pi \in \Pi_k$.

 For $n \geq \tau_{2T}$, let $\Gamma_n := \{ X_{2T} \hat \mu + u r_n (X_{2T}, u) : u \in \bbO_\mu \}$,
which is the relative boundary of $\cH_{k,n}$ in the hyperplane orthogonal to $\hat\mu$ at distance $X_{2T}$.
For every $\pi \in \Pi_k$, the points $S^\pi_0, \ldots, S^\pi_k$ are contained in the cylinder $\{ x \hat \mu + T \bbO_\mu : | x|  \leq T \}$. 
Hence for every $z \in \Gamma_n$ and every $j \in \{1,\ldots,n\}$, the angle formed between vectors $S_j^\pi - z$ and $\hat \mu$ (in their common plane) is bounded below by 
\[ \theta_0 = \arctan \left( \frac{T}{T + X_{2T}} \right) > 0 .\]
Every $u \in \bbO_\mu$ has $\Exp [ u^\tra Z ] =\0$, and so the strong law of large numbers
says that, as $n \to \infty$, $n^{-1} u^\tra S_n \to 0$, a.s. This holds a.s.~simultaneously for every $u$ in a countable dense subset of $\bbO_\mu$, and hence
$n^{-1} \sup_{u \in \bbO_\mu} ( u^\tra S_n ) \to 0$, a.s., as $n \to\infty$. On the other hand, a.s., $n^{-1} \mu^\tra S_n \to \| \mu \|^2 >0$.
Thus
\[ \lim_{n \to \infty} \frac{\sup_{u \in \bbO_\mu} ( u^\tra S_n )}{\mu^\tra S_n} = 0 , \as  \]
Consequently, 
there exists a random time $\ell_k \in \N$ such that
$S_{n}$ stays in a cone with vertex at some $x \hat \mu$ ($x \in \RP$)
and angular span $\theta_0/2$, say, for all $n \geq \ell_k$. 

It follows that, for every $\pi \in \Pi_k$ and every $n \geq \ell_k$,
no $S_0^\pi ,\ldots, S_k^\pi$ can be included in any face of $\cH_n$ which also
includes some member of $\{ S_{\ell_k}, S_{\ell_k+1}, \ldots \}$.
Let $F^\pi_n$ denote those faces of $\cH^\pi_n$ which include  at least one of $S_0^\pi ,\ldots, S_k^\pi$,
and $G^\pi_n$ denote those faces of $\cH^\pi_n$ which use none of $S_0^\pi ,\ldots, S_k^\pi$,
 and set $R_k := \sum_{i=1}^{\ell_k} \| Z_i \|$. Since faces in $F^\pi_n$ can include only $S_0^\pi, \ldots, S_{\ell_k}^\pi$,
every point of $F^\pi_n$ lies in $\cB_d ( \0, R_k )$, and  the points
$S_0^\pi ,\ldots, S_k^\pi$ can only appear as vertices in $F^\pi_n$ (and in no other faces of $\cH^\pi_n$).
This means that $\cH_n^\pi \setminus \cB_d (\0, R_k)$ is invariant under $\pi \in \Pi_k$.
Moreover, the vertices of faces in  $G^\pi_n$ are necessarily from $S_{k+1}, S_{k+2}, \ldots$,
which are included in $\cH_{k,n}$. Hence
$\cH^\pi_n \subseteq \cH_{k,n} \cup \cB_d ( \0, R_k )$ for every $\pi \in \Pi_k$.
This completes the proof of~\eqref{eq:l-k-interpretation}.
\end{proof}

\begin{proof}[Proof of Theorem~\ref{thm:zero-one}]
Since $\cH_{k, n} \subseteq \cH_n$, the symmetric difference
$\cH_n \sd \cH_{k, n} = \cH_n \setminus \cH_{k, n}$.
Proposition~\ref{prop:stability} then shows that, for every $k \in \ZP$, a.s., there exists $R_k < \infty$ such that, for every $n \in \ZP$, $\cH_n \sd \cH_{k, n} \subset \cB_d (\0, R_k)$.
Suppose that $F$ is macroscopic. Then, taking $C_1 = \cH_{n}$ and $C_2 = \cH_{k, n}$ in~\eqref{eq:macroscopic}, we see that for every $\eps >0$ there exists $v \in \RP$ (depending on $R_k$) such that
\[
 \left| \frac{ F ( \cH_n )}{ F ( \cH_{k,n} )} - 1 \right| \leq \eps \text{ whenever }   F(\cH_n) \geq v.
  \]
	By~\eqref{eq:G-diverges}, it thus follows that, for every $\eps >0$, a.s.,
\[
 (1-\eps)  \limsup_{n \to \infty} \frac{F ( \cH_{k,n} ) }{b_n}
\leq \limsup_{n \to \infty} \frac{F ( \cH_n ) }{b_n} \leq (1+\eps)  \limsup_{n \to \infty} \frac{F ( \cH_{k,n} ) }{b_n}. \]
Since $\eps>0$ was arbitrary, we conclude that  
\[
\xi := \limsup_{n \to \infty} \frac{F ( \cH_n ) }{b_n} =  \limsup_{n \to \infty} \frac{F ( \cH_{k,n} ) }{b_n}, \as \]
But $F ( \cH_{k,n} )$ is in the $k$-permutable $\sigma$-algebra. 
Thus $\xi$ is also in the $k$-permutable $\sigma$-algebra (extended, as usual, up to a.s.-equivalence). Since $k$ was arbitrary, it follows that~$\xi$
is a.s.~constant in $[0,+\infty]$, by the Hewitt--Savage zero--one law (see~\cite[pp.~232--238]{ct} for details, including the definition  of the extended 
permutable $\sigma$-algebra).
\end{proof}

\section{Evaluating the constant in the planar case}
\label{sec:planar}

\subsection{Solution to the isoperimetric problem}
\label{sec:isoperimetric}

In this section we work with $d=2$, which means our variational
problem concerns functions $f \in \cA_1$, i.e.~(see \S\ref{sec:strassen}) $f:[0,1] \to \R$ that are
absolutely continuous, have $f(0)=0$, and satisfy~$\int_0^1 f'(u)^2 \ud u < \infty$.
We will write $\cA := \cA_1$ for simplicity;
it is, however, helpful to generalize the domain of our functions
from $[0,1]$ to $[0,x]$. 

For $x \in (0,\infty)$, let $\cAx$ denote the set of  $f : [0, x] \to \R$ that are absolutely continuous, have $f(0)=0$,
and satisfy $\int_0^x f'(u)^2 \ud u < \infty$; note that $\cA^1 = \cA$.
Given~$f \in \cAx$, denote by $\of$ and $\uf$ the least concave majorant and greatest convex minorant of $f$ over $[0,x]$, so that
the continuous functions 
$\of : [0,x] \to \R$ and $\uf : [0,x] \to \R$ are such that $\uf (u) \leq f(u) \leq \of(u)$ for all $u \in [0,x]$, $\of (0) = \uf(0) =0$, $\of (x) = \uf (x) = f(x)$, 
$\of$ is concave, $\uf$ is convex, and the difference $\of - \uf$ is minimal. It is easy to see that
\begin{equation}
\label{eq:non-trivial}
 \uf (u ) < \of (u) \text{ for all } u \in (0,x), \text{ unless $f$ is linear.} \end{equation}

For $f \in \cAx$ and Borel $I \subseteq [0,x]$, define functionals
\begin{align}
\label{eq:area-functional} A_{I} (f) & := \int_I  \left( \of (u) - \uf (u) \right) \ud u ,\\
\label{eq:arc-length}
L_I (f) & := \int_I \sqrt{ 1 + f'(u)^2 } \ud u, \\
\label{eq:cost-functional} 
\Gamma_{I} (f)&  := \int_I f'(u)^2 \ud u .\end{align}
Note that we do not include in the notation 
the~dependence on the upper endpoint~$x$ of the domain of $f$. If $I=[a,b]$ is an interval, we write $A_{[a,b]} =A_{a,b}$ for simplicity; similarly for $L_{a,b}$
and $\Gamma_{a,b}$. 
When $x=1$, $\Gamma_{0,1} (f) = \| f \|^2_{\cA}$ as defined at~\eqref{eq:c-m-norm}, 
and $\Gamma_{0,1} (f) \leq 1$ for all~$f \in U_1$. Also, by Cauchy--Schwarz, the arc length functional~$L$
satisfies
\begin{equation}
\label{eq:arc-length-bound} L_{0,1} (f)^2 \leq \int_0^1 ( 1 +f'(u)^2 ) \ud u = 1 + \Gamma_{0,1} (f) ,\end{equation}
  and so $\sup_{f \in U_1} L_{0,1} (f) = \sqrt{2}$ (the supremum being attained by $f(u) = u$).

Here is the main result of this section.

\begin{theorem}
\label{thm:planar-optimum}
Let $x, \gamma \in (0,\infty)$.
 The unique $f \in \cAx$ that maximizes $A_{0,x} (f)$ subject to $\Gamma_{0,x} (f) \leq \gamma$ is $f =  \fopt_{x,\gamma}$ given by
\begin{equation}
\label{eq:f-star-formula} \fopt_{x,\gamma} (u) = \sqrt{\frac{3\gamma}{x^3}} u (x-u) , \text{ for } 0 \leq u \leq x , \end{equation}
which has $\Gamma_{0,x} (\fopt_{x,\gamma}) = \gamma$ and $A_{0,x} (\fopt_{x,\gamma}) = \sqrt{3 \gamma x^3} / 6$. 
\end{theorem}
\begin{remark}
	\phantomsection
\label{rem:vv}
As kindly pointed out to us by Vlad Vysotsky, 
the Strassen-type functional in the variational problem corresponding to Theorem~\ref{thm:planar-optimum}
is exactly 
the large-deviations rate function for a degenerate, non-centred Gaussian distribution. The solution to this isoperimetric problem 
is given in Proposition~2.15 of~\cite{av}. 
The proof (by approximating the degenerate distribution by non-degenerate ones) is omitted, but is it is fully covered in Theorem~1 of the very recent preprint~\cite{vysotsky}.
\end{remark}

The following subsections lay out the proof of Theorem~\ref{thm:planar-optimum}. Before doing so, we make some remarks
on the relation of Theorem~\ref{thm:planar-optimum} to some of the classical problems of isoperimetry. 
\emph{Dido's Problem}
is to find the rectifiable planar curve of a given length to maximize the area of the corresponding convex hull.
In terms of the area and arc-length functions $A_{0,1}$ and $L_{0,1}$ defined at~\eqref{eq:area-functional} and~\eqref{eq:arc-length},
the solution to the Dido problem reduces to the statement (see e.g.~Theorem~1.1 of~\cite{tilli}) that
\[ A_{0,1} (f) \leq \frac{L_{0,1} (f)^2}{2 \pi} , \text{ for all } f \in \cA , \]
with equality attained by the semicircle described by $f(u) = \sqrt{u (1-u)}$. The function $\fopt := \fopt_{1,1}$ has $L_{0,1} (\fopt) = 1 + \frac{\sqrt{3}}{6} \arcsinh ( \sqrt{3} ) \approx 1.380173$
and $A_{0,1} (\fopt) = \sqrt{3}/6 \approx 0.288675$;
the semicircle whose curved boundary has the same arc length has area $\approx 0.303171$.

The fact that the optimal curve in Dido's problem is a semicircle had been known since antiquity,
although proofs that are fully rigorous by modern standards are more recent: see e.g.~\cite{tilli,moran,melzak,schoenberg}
for precise statements and proofs of this result and higher-dimensional analogues, and e.g.~\cite{zalgaller} for a survey of neighbouring results in isoperimetric problems.

\subsection{Preliminaries}
\label{sec:prelim}

We will show that it suffices to prove Theorem~\ref{thm:planar-optimum} for the case $x = \gamma =1$. To do so, we first
 establish some basic results on how the functionals $A$ and $\Gamma$ behave under affine transformations.
Given $f :\RP \to \R$, define the function $f_{a,x} : \RP \to \R$ by
\[ f_{a,x} (u) := a f ( u/x ) . \]

\begin{lemma}
\label{lem:transforms}
Let $a, x \in (0,\infty)$. If $f \in \cA$, then
\[ A_{0,x} ( f_{a,x} ) = a x A_{0,1} ( f ) , \text{ and } \Gamma_{0,x} (f_{a,x} ) = \frac{a^2}{x}   \Gamma_{0,1} (f) .\]
On the other hand, if $f \in \cAx$, then
\[  A_{0,1} ( f_{a,1/x} ) = \frac{a}{x} A_{0,x} ( f ) , \text{ and } \Gamma_{0,1} (f_{a,1/x} ) = a^2 x   \Gamma_{0,x} (f) .\]
\end{lemma}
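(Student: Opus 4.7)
The plan is to reduce everything to an explicit change of variables, using the fact that the operations $f \mapsto f_{a,x}$ are affine on the graph, so they intertwine in a simple way with the operations of taking the concave majorant and convex minorant.

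First I would record the chain-rule identity $f_{a,x}'(u) = (a/x) f'(u/x)$, valid for a.e.\ $u$ whenever $f$ is absolutely continuous, from which it follows that $f_{a,x} \in \cA^x$ (respectively $f_{a,1/x} \in \cA$) under the hypotheses. Next, the key observation: because $u \mapsto u/x$ is an increasing affine bijection $[0,x] \to [0,1]$ and $y \mapsto a y$ is an increasing affine bijection of $\R$ (both $a, x > 0$), the graph of $f_{a,x}$ on $[0,x]$ is the image of the graph of $f$ on $[0,1]$ under an affinity of $\R^2$. Since affine maps preserve the properties of being concave and of being convex, and preserve the ordering $\uf \le f \le \of$, one has the identities
\[
\obar{(f_{a,x})}(u) = a \of (u/x) , \qquad \ubar{(f_{a,x})}(u) = a \uf (u/x), \qquad u \in [0,x],
\]
and symmetrically $\obar{(f_{a,1/x})}(u) = a \of (xu)$, $\ubar{(f_{a,1/x})}(u) = a \uf (xu)$ for $u \in [0,1]$. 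This step is the only one that requires a small word of justification; it follows from the uniqueness of the least concave majorant and greatest convex minorant.

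With these identities in hand, the claimed formulas are one line of substitution. For the first assertion, the substitution $v = u/x$ gives
\[
A_{0,x}(f_{a,x}) = \int_0^x a \bigl( \of(u/x) - \uf(u/x) \bigr) \ud u = a x \int_0^1 \bigl( \of(v) - \uf(v) \bigr) \ud v = a x A_{0,1}(f),
\]
and the same substitution applied to $\int_0^x (a/x)^2 f'(u/x)^2 \ud u$ yields $\Gamma_{0,x}(f_{a,x}) = (a^2/x) \Gamma_{0,1}(f)$. For the second assertion one uses the substitution $v = xu$ and the chain-rule identity $f_{a,1/x}'(u) = a x f'(xu)$ to obtain $A_{0,1}(f_{a,1/x}) = (a/x) A_{0,x}(f)$ and $\Gamma_{0,1}(f_{a,1/x}) = a^2 x \Gamma_{0,x}(f)$.

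There is no real obstacle here; the only point that is not purely mechanical is the transformation rule for $\of$ and $\uf$ under the affinity, and once that is established the rest is a change-of-variables exercise. The lemma will then be used in the next section to reduce the general $(x,\gamma)$ case of Theorem~\ref{thm:planar-optimum} to the normalized case $x = \gamma = 1$.
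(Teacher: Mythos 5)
Your proposal is correct and follows essentially the same route as the paper's proof: identify the concave majorant and convex minorant of the transformed function via the affinity $(u,y)\mapsto(xu,ay)$, then conclude by the change of variables $v=u/x$ (and the chain rule for the $\Gamma$ identity). The paper likewise treats the second pair of identities as ``the other case is similar,'' so there is nothing to add.
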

\begin{proof}
Suppose that $f \in \cA$.
The affine transformation $(u,y) \mapsto (x u, a y)$ 
preserves convexity. Thus if~$\of$ and $\uf$ are the concave majorant and convex minorant, respectively,
$\of_{a,x} = a \of ( u /x)$ and
$\uf_{a,x} = a \uf( u/x)$. Hence 
\begin{align*}
 A_{0,x} ( f_{a,x} ) 
& = \int_0^x \left( \of_{a,x} (u) - \uf_{a,x} (u) \right) \ud u \\
& = a \int_0^x \left( \of (u/x) - \uf (u/x) \right) \ud u \\
& = a x \int_0^1 \left( \of (v) - \uf (v) \right) \ud v = a x A_{0,1} ( f ) .\end{align*}
Moreover, since $f'_{a,x} (u) = \frac{a}{x} f' (u/x)$,
\begin{align*}
\Gamma_{0,x} (f_{a,x} ) & = \int_0^x f'_{a,x} (u)^2 \ud u 
= \frac{a^2}{x^2}  \int_0^x f' (u/x)^2 \ud u \\
& = \frac{a^2}{x}  \int_0^1 f' (v)^2 \ud v = \frac{a^2}{x}   \Gamma_{0,1} (f) .\end{align*}
The other case is similar.
\end{proof}

The next lemma shows that it suffices to consider the case of
$f :[0,1] \to \R$ subject to $\Gamma_{0,1} (f) \leq 1$.

\begin{lemma}
\label{lem:reduction}
Suppose that there exists a  unique $f \in \cA$ that maximizes $A_{0,1} (f)$ subject to $\Gamma_{0,1} (f) \leq 1$; denote this $f$ by $\fopt$.
Then for any $x, \gamma \in (0,\infty)$ there exists a   unique $f \in \cAx$ that maximizes $A_{0,x} (f)$ subject to $\Gamma_{0,x} (f) \leq \gamma$,
 and this $f$ is $\fopt_{x,\gamma}$ given by
\begin{equation}
\label{eq:f-star-general}
 \fopt_{x,\gamma}(u) = \sqrt{ \gamma x} \fopt ( u/x), \text{ for } u \in [0,x] .\end{equation}
Moreover,
\begin{equation}
\label{eq:f-star-general-values}
 \Gamma_{0,x} ( \fopt_{x,\gamma} ) = \gamma, \text{ and } A_{0,x} ( \fopt_{x,\gamma} ) = \sqrt{\gamma x^3} A_{0,1} ( \fopt ) .\end{equation}
\end{lemma}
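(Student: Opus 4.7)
My plan is to establish Lemma~\ref{lem:reduction} as a change-of-variables reduction, using the scaling identities in Lemma~\ref{lem:transforms} to set up a bijection between the constrained problems on $[0,x]$ and on $[0,1]$.

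First I would choose the correct scaling. Given $f \in \cAx$, apply the second part of Lemma~\ref{lem:transforms} with $a = 1/\sqrt{\gamma x}$ to define $\tilde f := f_{1/\sqrt{\gamma x}, 1/x} \in \cA$, i.e., $\tilde f(v) = f(xv)/\sqrt{\gamma x}$. The lemma then gives
\begin{equation*}
\Gamma_{0,1}(\tilde f) = \frac{1}{\gamma}\, \Gamma_{0,x}(f), \qquad A_{0,1}(\tilde f) = \frac{1}{\sqrt{\gamma x^3}}\, A_{0,x}(f).
\end{equation*}
The map $f \mapsto \tilde f$ is a bijection $\cAx \to \cA$ with inverse $\tilde f \mapsto \tilde f_{\sqrt{\gamma x}, x}$, and feasibility is preserved: $\Gamma_{0,x}(f) \leq \gamma$ iff $\Gamma_{0,1}(\tilde f) \leq 1$. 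Since $A_{0,x}(f)$ is a positive constant multiple of $A_{0,1}(\tilde f)$, the unique maximizer in the $(x,\gamma)$ problem corresponds under this bijection to the unique maximizer $\fopt$ in the $(1,1)$ problem. Inverting the bijection gives the maximizer in the form $\fopt_{x,\gamma}(u) = \sqrt{\gamma x}\,\fopt(u/x)$ claimed in~\eqref{eq:f-star-general}.

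Next I would compute the values~\eqref{eq:f-star-general-values}. Applying the first part of Lemma~\ref{lem:transforms} to $\fopt_{x,\gamma} = \fopt_{a,x}$ with $a = \sqrt{\gamma x}$ yields
\begin{equation*}
\Gamma_{0,x}(\fopt_{x,\gamma}) = \frac{a^2}{x}\, \Gamma_{0,1}(\fopt) = \gamma\, \Gamma_{0,1}(\fopt), \qquad A_{0,x}(\fopt_{x,\gamma}) = ax\, A_{0,1}(\fopt) = \sqrt{\gamma x^3}\, A_{0,1}(\fopt).
\end{equation*}
The only non-routine point is to confirm that $\Gamma_{0,1}(\fopt) = 1$ so that the first formula reduces to~$\gamma$. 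This follows from a simple extremality argument: if we had $\Gamma_{0,1}(\fopt) = c < 1$, then for any $\kappa \in (1, 1/\sqrt{c}\,]$ the function $\kappa \fopt$ would still satisfy $\Gamma_{0,1}(\kappa \fopt) = \kappa^2 c \leq 1$, while (since $\overline{\kappa \fopt} = \kappa \of$ and $\underline{\kappa \fopt} = \kappa \uf$ by linearity of the convex/concave envelopes under positive scaling) we would have $A_{0,1}(\kappa \fopt) = \kappa A_{0,1}(\fopt) > A_{0,1}(\fopt)$, contradicting optimality. Hence $\Gamma_{0,1}(\fopt) = 1$, completing~\eqref{eq:f-star-general-values}.

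The argument is essentially mechanical once the correct affine rescaling is identified; I do not anticipate a serious obstacle. The one subtlety worth flagging is the need to observe that the saturation $\Gamma_{0,1}(\fopt) = 1$ is automatic rather than an additional hypothesis, which is why the scaling extremality step is included; beyond that, everything follows from the bijective nature of the affine reparametrization together with the two identities from Lemma~\ref{lem:transforms}.
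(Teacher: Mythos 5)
Your proposal is correct and follows essentially the same route as the paper: both arguments rest on the affine rescaling identities of Lemma~\ref{lem:transforms} to transfer the constrained problem on $[0,x]$ to the one on $[0,1]$ (you phrase this as an explicit bijection, the paper as a contradiction with a hypothetical better competitor $f_0$), and both include the same scaling argument showing the constraint must be saturated, $\Gamma_{0,1}(\fopt)=1$. No gaps.
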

\begin{proof}
Suppose existence of the optimal $\fopt \in \cA$ is given, as described, with $\Gamma_{0,1} (\fopt) = \gamma \in (0,1]$.
Then $f \in \cA$ defined by $f(u) = \gamma^{-1/2} \fopt (u)$ has 
$\Gamma_{0,1} (f) = 1$ and 
$A_{0,1} ( f ) = \gamma^{-1/2} A_{0,1} (\fopt)$ (as follows from the $x=1$ case of Lemma~\ref{lem:transforms}).
This would contradict optimality of $\fopt$ unless $\gamma =1$; hence $\Gamma_{0,1} (\fopt) = 1$.

Define  $\fopt_{x,\gamma}$ by~\eqref{eq:f-star-general}.
Then, by~\eqref{eq:f-star-general} and Lemma~\ref{lem:transforms}, we have
$\Gamma_{0,x} ( \fopt_{x,\gamma} ) = \gamma \Gamma_{0,1} (\fopt) = \gamma$ (as argued above), and
$A_{0,x} (\fopt_{x,\gamma} ) =  \sqrt{\gamma x^3} A_{0,1} ( \fopt )$, as claimed at~\eqref{eq:f-star-general-values}.
It remains to prove optimality and uniqueness of $\fopt_{x,\gamma}$.

Suppose that there is an $f_0 \in \cAx$
for which $\Gamma_{0,x} (f_0) = \gamma_0 \in (0,\infty)$ with $\gamma_0 \leq \gamma$,
and $A_{0,x} (f_0) \geq A_{0,x} (\fopt_{x,\gamma})$.
Then we can define $f \in \cA$ by $f (u) = (\gamma x)^{-1/2} f_0 ( x u)$ for $u \in [0,1]$.
By Lemma~\ref{lem:transforms}, this $f$ has $\Gamma_{0,1} (f) = \gamma_0 / \gamma \leq 1$ and
\[ A_{0,1} (f) = \frac{1}{\sqrt{\gamma x^3}} A_{0,x} (f_0) \geq \frac{1}{\sqrt{\gamma x^3}} A_{0,x} ( \fopt_{x,\gamma} ) 
= A_{0,1} ( \fopt ) , \]
by~\eqref{eq:f-star-general-values}. This is in contradiction to optimality of $\fopt$ unless $A_{0,x} (f_0) = A_{0,x} ( \fopt_{x,\gamma} )$;
hence $\fopt_{x,\gamma} \in \cAx$ maximizes $A_{0,x}$ subject to $\Gamma_{0,x} (f) \leq \gamma$.
Moreover, suppose $A_{0,x} (f_0) = A_{0,x} ( \fopt_{x,\gamma} )$ so that $A_{0,1} (f) = A_{0,1} (\fopt)$.
Then, by uniqueness, $f = \fopt$, and hence $f_0 = \fopt_{x,\gamma}$, i.e., we get uniqueness of $\fopt_{x,\gamma}$. 
\end{proof}

\subsection{Reduction to non-negative bridges}
\label{sec:reductions}

Lemma~\ref{lem:reduction} shows that to prove Theorem~\ref{thm:planar-optimum},
it suffices to suppose that $x =1$ and $\gamma =1$.
In this subsection we further reduce the class of functions that we need to consider;
the next step is to show that it suffices to work with \emph{bridges}. 
Let $\cB := \{ f \in \cA : f(1) = 0 \}$ (the set of bridges)
and $\cBp := \{ f \in \cB : f(u) \geq 0 \text{ for all } u \in [0,1]\}$ (non-negative bridges). 
Observe that if $f \in \cB$ then $\of \in \cBp$ and $-\uf \in \cBp$, so $A_{0,1} (f) = A_{0,1} (\of)  + A_{0,1} (-\uf)$. Moreover,
if $f \in \cBp$ then $\uf \equiv 0$ and so
\begin{equation}
\label{eq:positive-bridges} A_{0,1} (f) = A_{0,1} (\of) =  \int_0^1 \of (u) \ud u \geq \int_0^1 f(u) \ud u, \text{ for } f \in \cBp , \end{equation}
with equality if and only if $f$ is concave.

For $f \in \cA$, define $\hf : [0,1] \to \R$ by
\begin{equation}
\label{eq:bridge-def}
\hf (u ) := f(u) - u f(1), \text{ for } 0 \leq u \leq 1.
\end{equation}

\begin{lemma}
\label{lem:bridge}
Suppose that $f \in \cA$. Then (i) $\hf \in \cB$; (ii)~$A_{0,1} (\hf) = A_{0,1} (f)$; and~(iii) $\Gamma_{0,1} (\hf) \leq \Gamma_{0,1} (f)$, with equality if and only if $f(1) =0$.
\end{lemma}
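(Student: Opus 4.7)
The plan is to verify each of the three claims in turn, with part~(ii) being the main geometric content and parts~(i) and~(iii) being routine computations.

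For part~(i), I would simply evaluate $\hf(0) = f(0) - 0 \cdot f(1) = 0$ (since $f \in \cA$ implies $f(0)=0$) and $\hf(1) = f(1) - f(1) = 0$. Absolute continuity is inherited from~$f$ (the subtracted term is linear), and $\hf'(u) = f'(u) - f(1)$ a.e., so $\int_0^1 \hf'(u)^2 \ud u < \infty$ follows from $\int_0^1 f'(u)^2 \ud u < \infty$. Hence $\hf \in \cB$.

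For part~(ii), the key observation is that adding or subtracting an affine function preserves convexity and concavity. Writing $g(u) := u f(1)$, I would show that $\overline{\hf} = \of - g$ and $\underline{\hf} = \uf - g$. Indeed, $\of - g$ is concave (difference of a concave and an affine function) and dominates $f - g = \hf$; conversely, if any concave $h$ satisfies $h \geq \hf$, then $h + g$ is concave and dominates $f$, so $h + g \geq \of$, giving minimality. The argument for $\underline{\hf}$ is symmetric. Consequently $\overline{\hf} - \underline{\hf} = \of - \uf$ pointwise on $[0,1]$, so integrating yields $A_{0,1}(\hf) = A_{0,1}(f)$.

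For part~(iii), I would expand the square: since $\hf'(u) = f'(u) - f(1)$ and $\int_0^1 f'(u) \ud u = f(1) - f(0) = f(1)$,
\begin{equation*}
\Gamma_{0,1}(\hf) = \int_0^1 \bigl( f'(u) - f(1) \bigr)^2 \ud u = \Gamma_{0,1}(f) - 2 f(1) \int_0^1 f'(u) \ud u + f(1)^2 = \Gamma_{0,1}(f) - f(1)^2 .
\end{equation*}
This immediately gives $\Gamma_{0,1}(\hf) \leq \Gamma_{0,1}(f)$ with equality iff $f(1) = 0$. (Conceptually, this is the Pythagorean identity for the $L^2$-orthogonal decomposition of $f'$ into its mean $f(1)$ and its mean-zero part $\hf'$.)

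No step appears to present a serious obstacle; the only point requiring a moment's care is the identification of the concave majorant of $\hf$ in part~(ii), which hinges on the fact that the class of concave functions is stable under addition of linear functions.
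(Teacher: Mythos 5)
Your proof is correct and follows essentially the same route as the paper: the same identification of $\overline{\hf}=\of - g$ and $\underline{\hf}=\uf - g$ via stability of concavity/convexity under subtraction of the linear function $g(u)=uf(1)$, and the same expansion of the square for part~(iii). (Your Pythagorean identity $\Gamma_{0,1}(\hf)=\Gamma_{0,1}(f)-f(1)^2$ is the correct form; the paper's displayed version of this identity contains a harmless sign slip.)
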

\begin{proof}
The function $\og$ defined by $\og (u) := \of (u) - u f(1)$ is concave, since~$\of$ is concave, and satisfies $\hf (u) \leq \og (u)$ for all $u \in [0,1]$.
Moreover, if $g$ is any concave majorant of $\hf$, then the function~$u \mapsto g (u) + u f(1)$ is a concave majorant of~$f$; hence $\og$ must be the least concave majorant of~$\hf$. Similarly,
$\ug(u) := \uf (u) - u f(1)$ is the greatest convex minorant of~$f$. Thus $A_{0,1} ( \hf ) = \int_0^1 (\of (u) - \uf (u)) \ud u = A_{0,1} (f)$, as claimed.
Now, since $f(0)=0$, we have $f(1) =  \int_0^1 f'(u) \ud u$, and hence 
\[ \Gamma_{0,1} ( f ) = \int_0^1  f'(u)^2 \ud u
= \int_0^1 \left( f'(u) - f(1) \right)^2 \ud u + f(1)^2 = \Gamma_{0,1} (\hf ) + f(1)^2, 
\]
so that $\Gamma_{0,1} (\hf ) \leq \Gamma_{0,1} ( f ) $, 
with equality if and only if $f(1) = 0$. In particular, $\hf \in \cA$, and $\hf(1) = 0$ by~\eqref{eq:bridge-def}, so in fact $\hf \in \cB$.
\end{proof}

Subsequent reductions will be based on some surgeries applied to our functions. To describe them, we introduce
notation associated with the \emph{faces} of the convex hull. 

Suppose that $f \in \cB$ is not the constant (zero) function.
Consider the concave majorant $\of$ of $f : [0,1] \to \RP$.
Then $[0,1]$ can be partitioned as $[0,1] = \cE^+ \cup \cF^+$,
where $\cE^+$ are the indices of \emph{extreme points} of $\of$~\cite[p.~75]{gruber}, and
$\cF^+$ are indices of points in the interior of \emph{faces} of $\of$:
that is, $t \in \cF^+$ if and only if there is a unique supporting half-plane
at $(t, \of(t))$ whose boundary $( (t, \of(t)) + v \alpha )_{\alpha \in \R}$, $v \in \R^2$,
intersects $H( f )$ for $\alpha$ in an open interval containing~$0$.

Similarly, we have $[0,1] = \cE^- \cup \cF^-$, where $\cE^-$ and $\cF^-$ correspond to extreme points and faces of $\uf$.
Write
$\cE := \cE^- \cup \cE^+$.
Note that $H (f) = \hull \{ (t, f(t) ) : t \in \cE \}$. Moreover, every $t \in \cE$ has  $\of (t) = f(t)$ (if $t \in \cE^+$) or $\uf (t) = f(t)$ (if $t \in \cE^-$), and,
since $f$ is not linear, the observation~\eqref{eq:non-trivial} shows that $\cE^- \cap \cE^+ \cap (0,1) = \emptyset$,
and so we can speak of $\cE$ as being the set of extreme points, without the possibility of duplications. 
Put differently, every $t \in (0,1)$ must belong to at least one of $\cF^+, \cF^-$, i.e., $\cF^+ \cup \cF^- = (0,1)$.

The set $\cF^+$ is a union of disjoint (maximal) open intervals in $[0,1]$,
each interval corresponding to the horizontal span of a face of $\of$.
 For every $n \in \N$ there are at most finitely many such intervals of length more than $1/n$,
so $\cF^+$ is a union of at most countably many disjoint open intervals. Enumerate the  intervals of $\cF^+$ from left to right by $(u^+_k, v^+_k )$, $k \in I^+_\mathrm{f}$.
Similarly, enumerate the intervals of $\cF^-$ by $(u^-_k, v^-_k )$, $k \in I^-_\mathrm{f}$.
By construction $u_k^\pm, v_k^\pm \in \cE$ for all~$k$.
 For reference, we summarize the preceding
 discussion in the following lemma.

\begin{lemma}
\label{lem:faces}
Suppose $f \in \cB$ is not the zero function. 
There exist two partitions of $[0,1]$, $\cE^+ \cup \cF^+ = \cE^- \cup \cF^- = [0,1]$,
with the following properties.
\begin{itemize}
\item Each set $\cF^\pm$ is the union of pairwise disjoint intervals: $\cF^\pm = \cup_{k \in I^\pm_\mathrm{f}} (u^\pm_k, v^\pm_k )$, 
where $u^\pm_k, v^\pm_k \in \cE^\pm$, it holds that $0 \leq u^\pm_1 < v^\pm_1 < u^\pm_2 < \cdots \leq 1$, and the index set $I^\pm_\mathrm{f}$
is finite or countably infinite.
\item We have $\cF^+ \cup \cF^- = (0,1)$.
\item For all $t \in \cE^+$, $f(t) = \of (t)$, and, for all $t \in \cE^-$, $f(t) = \uf (t)$.
\end{itemize}
\end{lemma}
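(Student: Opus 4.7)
The plan is to extract the statements of Lemma~\ref{lem:faces} directly from the structural properties of the concave majorant~$\of$ and the convex minorant~$\uf$, largely making rigorous the discussion immediately preceding the lemma. First I would address the partition $[0,1] = \cE^+ \cup \cF^+$: by definition of the concave function $\of$ on a compact interval, each $t \in [0,1]$ either (a) admits a unique supporting line to the hypograph of $\of$ at $(t, \of(t))$, in which case $t$ is an extreme point of $\of$ so $t \in \cE^+$, or (b) the graph of $\of$ is affine in a relatively open neighbourhood of $t$, placing $t$ in $\cF^+$. These cases are exclusive, so the decomposition is a partition. Each face of $\of$ is a maximal affine piece of the graph, and its interior projects to an open interval $(u,v) \subset [0,1]$ whose endpoints, being transition points where the slope of the (concave) $\of$ strictly decreases, lie in $\cE^+$. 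Distinct faces yield disjoint projected intervals by maximality, and for each $n$ there are at most finitely many such intervals of length exceeding $1/n$, so $I^+_{\rm f}$ is at most countable and the intervals admit the stated left-to-right enumeration. The same reasoning applies to~$\cF^-$.

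For the second bullet, suppose for contradiction that some $t_0 \in (0,1)$ lies in neither $\cF^+$ nor $\cF^-$. Then $t_0 \in \cE^+ \cap \cE^-$. Since $(t_0, \of(t_0))$ is an extreme point of $H(f) = \hull\{(s,f(s)) : s \in [0,1]\}$, it must coincide with one of the generators $(s,f(s))$, which forces $s = t_0$ and $\of(t_0) = f(t_0)$; likewise $\uf(t_0) = f(t_0)$, so $\of(t_0) = \uf(t_0)$. But $f \in \cB$ with $f \not\equiv 0$ and $f(0) = f(1) = 0$ implies $f$ is not linear, so~\eqref{eq:non-trivial} yields $\of(t_0) > \uf(t_0)$, a contradiction. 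The same extreme-point argument simultaneously delivers the third bullet: every $t \in \cE^+$ has $f(t) = \of(t)$, and every $t \in \cE^-$ has $f(t) = \uf(t)$.

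There is no real obstacle here, as the lemma is essentially bookkeeping; the only point that needs a little care is the verification of $\cF^+ \cup \cF^- = (0,1)$, which is precisely where the hypothesis $f \not\equiv 0$ (combined with $f \in \cB$) is used, via~\eqref{eq:non-trivial}. Standard references for the general structural facts about concave functions and extreme points (e.g.\ \cite{schneider,gruber}) can be cited to shorten the exposition.
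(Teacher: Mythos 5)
Your overall route is the same as the paper's: the paper gives no formal proof of Lemma~\ref{lem:faces} (it explicitly ``summarizes the preceding discussion''), and your treatment of the second and third bullets --- extreme points of $H(f)=\hull\{(s,f(s)):s\in[0,1]\}$ must lie among the generators, so $t\in\cE^\pm$ forces $f(t)=\of(t)$ resp.\ $f(t)=\uf(t)$, and then $\cE^+\cap\cE^-\cap(0,1)=\emptyset$ follows from~\eqref{eq:non-trivial} because a bridge that is not identically zero cannot be linear --- is exactly the argument in the text. The countability and enumeration of the face intervals is also handled as in the paper.

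However, your justification of the first bullet contains a genuine error. You assert that $t$ ``admits a unique supporting line to the hypograph of $\of$ at $(t,\of(t))$, in which case $t$ is an extreme point,'' and that this case is exclusive of the case where $\of$ is affine near $t$. Uniqueness of the supporting line at $(t,\of(t))$ is equivalent to differentiability of $\of$ at $t$, not to extremality: the midpoint of a straight face of $\of$ (take $f(t)=\min(t,1-t)$, so $\of=f$, and $t=1/4$) has a unique supporting line yet lies in $\cF^+$, so your cases (a) and (b) are neither exclusive nor does (a) imply $t\in\cE^+$. The dichotomy you actually need --- and the one the paper's definitions encode --- is the standard boundary structure of a planar convex body: a point of the graph of $\of$ is either an extreme point of $H(f)$ or lies in the relative interior of a maximal segment (face) contained in the boundary; maximality of the faces then gives disjointness of the intervals and puts their endpoints in $\cE^+$ (for an interior endpoint $u$ of a maximal face, containment of $(u,\of(u))$ in the relative interior of another boundary segment would, by concavity, extend the face and contradict maximality; the endpoints $0$ and $1$ are extreme because they uniquely minimise/maximise the first coordinate over $H(f)$). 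This is a localized and easily repaired slip, and it does not propagate to the rest of your argument, but as written the step is false.
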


The following result shows that it suffices to consider non-negative bridges.

\begin{proposition}
\label{prop:symmetrization}
For every $f \in \cB$, there exists $\fs \in \cBp$ for which (i) $\Gamma_{0,1} (\fs ) = \Gamma_{0,1} (f)$; and (ii) $A_{0,1} (\fs) \geq A_{0,1} (f)$.
\end{proposition}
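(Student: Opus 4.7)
The plan is to exhibit two natural candidates $\fs_1, \fs_2 \in \cBp$, each preserving the Dirichlet-type energy $\Gamma_{0,1}$, and to argue that at least one of them satisfies the area inequality. Specifically, I would set
\[
\fs_1 := f - 2\uf, \qquad \fs_2 := 2\of - f.
\]
Since $f \in \cB$ forces $\uf \leq 0 \leq \of$ on $[0,1]$ with $\uf(0) = \uf(1) = \of(0) = \of(1) = 0$, the decompositions $\fs_1 = (f-\uf) + (-\uf)$ and $\fs_2 = \of + (\of - f)$ both display $\fs_i$ as a sum of non-negative terms, and both $\fs_i$ vanish at $0$ and $1$. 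Since $\of$ and $\uf$ are Lipschitz on $[0,1]$ (finite-valued concave/convex functions on a compact interval), each $\fs_i$ is absolutely continuous; finite energy will follow from the next step.

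Next I would verify that $\Gamma_{0,1}(\fs_i) = \Gamma_{0,1}(f)$ for $i=1,2$. Expanding the square,
\[
\Gamma_{0,1}(\fs_1) = \Gamma_{0,1}(f) - 4 \int_0^1 f'(u)\uf'(u)\,\ud u + 4 \Gamma_{0,1}(\uf),
\]
and symmetrically for $\fs_2$ with $\of$ in place of $\uf$. The crux is the identity
\[
\int_0^1 f'(u)\uf'(u)\,\ud u \; = \; \Gamma_{0,1}(\uf),
\]
together with the analogous $\int_0^1 f' \of'\,\ud u = \Gamma_{0,1}(\of)$, which I would prove using the partition from Lemma~\ref{lem:faces}. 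On each face $(u_k^-, v_k^-)$ in $\cF^-$, the slope $\uf' \equiv c_k^-$ is constant, and since $u_k^-, v_k^- \in \cE^-$ we have $f(u_k^-) = \uf(u_k^-)$ and $f(v_k^-) = \uf(v_k^-)$; hence
\[
\int_{u_k^-}^{v_k^-} f' \uf'\,\ud u = c_k^- \bigl( f(v_k^-) - f(u_k^-) \bigr) = (c_k^-)^2 (v_k^- - u_k^-) = \int_{u_k^-}^{v_k^-} (\uf')^2\,\ud u.
\]
On the extreme-point set $\cE^-$, the non-negative AC function $f - \uf$ vanishes identically, so a standard Lebesgue-density argument gives $(f-\uf)'(t) = 0$ for a.e.\ $t \in \cE^-$, hence $f' = \uf'$ a.e.\ on $\cE^-$. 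Summing the contributions over the (at most countable) collection of faces in $\cF^-$ and over $\cE^-$ yields the claimed identity; the same face-level Cauchy--Schwarz estimate also shows $\Gamma_{0,1}(\uf), \Gamma_{0,1}(\of) \leq \Gamma_{0,1}(f) < \infty$, so all energies are finite and $\fs_1, \fs_2 \in \cBp$.

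Finally, the algebraic identity $\fs_1 + \fs_2 = 2(\of - \uf)$ integrates to
\[
\int_0^1 \fs_1\,\ud u + \int_0^1 \fs_2\,\ud u = 2 \int_0^1 (\of - \uf)\,\ud u = 2\, A_{0,1}(f),
\]
so at least one of the two integrals is $\geq A_{0,1}(f)$. Taking $\fs := \fs_i$ for such an $i$, and applying~\eqref{eq:positive-bridges} to $\fs \in \cBp$, I obtain
\[
A_{0,1}(\fs) = \int_0^1 \obar{\fs}(u)\,\ud u \geq \int_0^1 \fs(u)\,\ud u \geq A_{0,1}(f),
\]
which completes the proof. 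I expect the most delicate step to be the justification of the mixed-integral identity $\int_0^1 f' \uf'\,\ud u = \Gamma_{0,1}(\uf)$, because $\cE^-$ need not be a finite union of intervals and a careful Lebesgue-measure accounting is required to patch the countable collection of face contributions to the part of $[0,1]$ where $f$ and $\uf$ coincide.
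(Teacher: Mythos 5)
Your proof is correct, but it takes a genuinely different route from the paper's. The paper builds $f^{\mathrm{s}+}$ and $f^{\mathrm{s}-}$ by reflecting (time-reversing) $f$ across each face of $\of$ and $\uf$ respectively; energy preservation is then immediate, since the derivative is merely rearranged, and the constructed functions satisfy $\int_0^1 f^{\mathrm{s}+} = \int_0^1(2\of-f)$ and $\int_0^1 f^{\mathrm{s}-} = \int_0^1(f-2\uf)$, after which the paper finishes with exactly your averaging step $\max(\cdot,\cdot) \geq \tfrac12(\cdot+\cdot) = A_{0,1}(f)$ together with~\eqref{eq:positive-bridges}. You instead take the linear combinations $2\of-f$ and $f-2\uf$ themselves as the candidates (these are \emph{not} the same functions as the paper's $f^{\mathrm{s}\pm}$, whose ``excess'' over $\of$ is time-reversed within each face, though they share the same integrals), so non-negativity and the integral identity are trivial, and the whole burden shifts to the orthogonality relation $\int_0^1(f'-\uf')\uf'\,\ud u=0$. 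Your proof of that relation is sound: constant slope on each face interval with endpoints in $\cE^-$ where $f=\uf$, plus $(f-\uf)'=0$ a.e.\ on $\cE^-$ (every differentiability point of the non-negative function $f-\uf$ at which it vanishes is a global minimum), with the face-level Cauchy--Schwarz giving $\Gamma_{0,1}(\uf)\leq\Gamma_{0,1}(f)<\infty$ so that the countable sum converges absolutely. One small inaccuracy: $\of$ and $\uf$ need not be Lipschitz on $[0,1]$ (their one-sided slopes can blow up at the endpoints, e.g.\ for a bridge behaving like $t^{3/4}$ near $0$); what you actually need, and do get, is that a finite convex function continuous on the closed interval is absolutely continuous, with square-integrability of its derivative supplied by your Cauchy--Schwarz bound. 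On balance your argument is a little more algebraic and avoids defining and checking the reflected functions, while the paper's rearrangement makes the energy identity free and yields the extra pointwise domination $f^{\mathrm{s}+}\geq\of$.
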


The proof of the Proposition~\ref{prop:symmetrization} uses a kind of \emph{symmetrization}, which is a common idea in isoperimetric problems (see e.g.~the recent work~\cite{bgg,solynin}).

\begin{definition}
\label{def:symmetrization}
Suppose $f \in \cB$, and define the intervals  $(u^+_k, v^+_k )$, $k \in I^+_\mathrm{f}$, and $(u^-_k, v^-_k )$, $k \in I^-_\mathrm{f}$, as in Lemma~\ref{lem:faces}.
Define $f^{\mathrm{s}+} : [0,1] \to \R$ by
\[ f^{\mathrm{s}+} (t ) := \begin{cases} f (u^+_k ) + f (v^+_k) - f (u^+_k +v^+_k - t) & \text{if } t \in [u^+_k, v^+_k], \, k \in I^+_\mathrm{f} \\
f (t) & \text{otherwise.} \end{cases} \]
Similarity, define
\[ f^{\mathrm{s}-} (t ) := \begin{cases} f (u^-_k +v^-_k - t) - f (u^-_k ) - f (v^-_k)& \text{if } t \in [u^-_k, v^-_k], \, k \in I^-_\mathrm{f} \\
- f (t) & \text{otherwise.} \end{cases} \]
\end{definition}

See Figure~\ref{fig:symmetrization} for an illustration: $f^{\mathrm{s}+}$ is constructed by a sequence of reflections
(more accurately, each is a time-reversal and change of sign) across each straight-line face of the concave majorant of~$f$; similarly,
$f^{\mathrm{s}-}$ is derived from the convex minorant.
The proof of Proposition~\ref{prop:symmetrization} amounts to establishing that the $\fs$ in the claim 
therein 
can be chosen to be one of $f^{\mathrm{s}+}$ or $f^{\mathrm{s}-}$ defined at Definition~\ref{def:symmetrization}.
The following series of properties will establish this.

\begin{lemma}
\label{lem:symmetrized-functions}
Suppose $f \in \cB$, and consider the functions  $f^{\mathrm{s}+}$ and $f^{\mathrm{s}-}$ defined at Definition~\ref{def:symmetrization}.
Then (i) $f^{\mathrm{s}+} \in \cBp$ with $f^{\mathrm{s}+} (u) \geq \of (u)$ for all $u \in [0,1]$;
(ii) $f^{\mathrm{s}-} \in \cBp$ with $f^{\mathrm{s}-} (u) \geq -\uf (u)$ for all $u \in [0,1]$;
(iii) $\Gamma_{0,1} ( f^{\mathrm{s}-} ) = \Gamma_{0,1} ( f^{\mathrm{s}+} )= \Gamma_{0,1} (f)$;
  (iv) 
\begin{equation}
\label{eq:area-excess}
 \int_0^1   f^{\mathrm{s}+} (u)  \ud u = \int_0^1 \bigl(  2 \of (u) - f (u) \bigr) \ud u \text{ and } \int_0^1    f^{\mathrm{s}-} (u)   \ud u = \int_0^1 \bigl(   f(u) - 2 \uf (u)  \bigr) \ud u ;\end{equation}
and (v)~$\max( A_{0,1} ( f^{\mathrm{s}+} ) ,  A_{0,1} ( f^{\mathrm{s}-} )  ) \geq  A_{0,1} (f)$.
\end{lemma}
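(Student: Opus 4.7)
The plan is to verify each of the five claims by exploiting the decomposition of $[0,1]$ afforded by Lemma~\ref{lem:faces} together with two elementary facts: (a)~on each face interval $[u^+_k,v^+_k]$ the majorant $\of$ is linear, so for $s,t$ with $s+t = u^+_k+v^+_k$ one has $\of(s)+\of(t) = \of(u^+_k)+\of(v^+_k) = f(u^+_k)+f(v^+_k)$; (b)~since $f \in \cB$ satisfies $f(0)=f(1)=0$, concavity of $\of$ and convexity of $\uf$ yield $\of \geq 0$ and $\uf \leq 0$ throughout $[0,1]$. The analogous identities hold on the face intervals $[u^-_k, v^-_k]$ of $\uf$.

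For~(i), continuity of $f^{\mathrm{s}+}$ at each endpoint of a face interval is immediate from the reflection formula (it collapses to $f(u^+_k)$ or $f(v^+_k)$ there, matching the exterior value). On a face interval, using~(a) and $f(s) \leq \of(s)$ for $s = u^+_k+v^+_k-t$,
\begin{equation*}
f^{\mathrm{s}+}(t) = \of(u^+_k)+\of(v^+_k) - f(s) \geq \of(u^+_k)+\of(v^+_k) - \of(s) = \of(t),
\end{equation*}
while on $\cE^+$ the function $f^{\mathrm{s}+}$ coincides with $f = \of$. Combined with~(b) this gives $f^{\mathrm{s}+} \geq \of \geq 0$; the boundary conditions $f^{\mathrm{s}+}(0)=f^{\mathrm{s}+}(1) = 0$ follow directly (treating any face containing~$0$ or~$1$ separately). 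Absolute continuity of $f^{\mathrm{s}+}$ across the possibly countably many face intervals follows from the $L^1$ analogue of~(iii) below. Part~(ii) is proved identically, with $f^{\mathrm{s}-} \geq -\uf \geq 0$ replacing $f^{\mathrm{s}+} \geq \of \geq 0$.

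For~(iii), on each face interval $[u^+_k,v^+_k]$ the reflection formula gives $(f^{\mathrm{s}+})'(t) = f'(u^+_k+v^+_k-t)$ a.e., and the substitution $s = u^+_k+v^+_k-t$ shows the integral of $((f^{\mathrm{s}+})')^2$ over the interval equals that of $(f')^2$; on $\cE^+$ the two functions (and derivatives) agree. Summing yields $\Gamma_{0,1}(f^{\mathrm{s}+}) = \Gamma_{0,1}(f)$, and $f^{\mathrm{s}-}$ is analogous. For~(iv), the same substitution combined with the trapezoidal identity $\int_{u^+_k}^{v^+_k}\of(s)\,\ud s = \tfrac12(v^+_k-u^+_k)(f(u^+_k)+f(v^+_k))$ delivers
\begin{equation*}
\int_{u^+_k}^{v^+_k} f^{\mathrm{s}+}(t)\,\ud t = (v^+_k-u^+_k)\bigl(f(u^+_k)+f(v^+_k)\bigr) - \int_{u^+_k}^{v^+_k} f(s)\,\ud s = \int_{u^+_k}^{v^+_k} \bigl[2\of(s)-f(s)\bigr]\,\ud s,
\end{equation*}
which reduces trivially to $2\of - f = f^{\mathrm{s}+}$ on $\cE^+$ (where $f=\of$); the identity for $f^{\mathrm{s}-}$ is symmetric.

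The crux is step~(v). Adding the two identities in~(iv) gives
\begin{equation*}
\int_0^1 f^{\mathrm{s}+}(u)\,\ud u + \int_0^1 f^{\mathrm{s}-}(u)\,\ud u = 2\int_0^1 \bigl[\of(u)-\uf(u)\bigr]\,\ud u = 2A_{0,1}(f).
\end{equation*}
Since $f^{\mathrm{s}\pm} \in \cBp$ by parts (i)--(ii), the bound $A_{0,1}(g) \geq \int_0^1 g(u)\,\ud u$ for $g \in \cBp$ from~\eqref{eq:positive-bridges} gives $A_{0,1}(f^{\mathrm{s}+}) + A_{0,1}(f^{\mathrm{s}-}) \geq 2A_{0,1}(f)$, hence $\max\{A_{0,1}(f^{\mathrm{s}+}), A_{0,1}(f^{\mathrm{s}-})\} \geq A_{0,1}(f)$. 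The only real obstacle is the bookkeeping across the potentially countable family of face intervals together with the boundary cases when $0$ or $1$ lies in a face; the geometric heart of the argument is the observation that the two symmetrizations together recover exactly twice $A_{0,1}(f)$, so at least one of them must do as well as $f$.
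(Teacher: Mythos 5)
Your proposal is correct and follows essentially the same route as the paper's proof: the pointwise reflection inequality $f^{\mathrm{s}\pm} \geq \of$ (resp.\ $\geq -\uf$) on each face interval, the change of variables $s = u^\pm_k + v^\pm_k - t$ for $\Gamma$, the face-by-face integration for~\eqref{eq:area-excess}, and the averaging argument via~\eqref{eq:positive-bridges} for~(v). The only differences are cosmetic (you state the excess inequality at $t$ via affinity of $\of$ rather than at the reflected point, and use the trapezoid identity in~(iv)), and your level of detail on absolute continuity matches the paper's.
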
 
\begin{proof}
Consider the function $f^{\mathrm{s}+}$.
If $t \in \cE^+$, then $f^{\mathrm{s}+} (t) = f(t) = \of (t)$. Otherwise, $t \in ( u^+_k, v^+_k )$ for some $k \in I^+_\mathrm{f}$.
On this interval, $\of$ is the line segment (face) given by
\[ \of ( t) = f ( u^+_k ) + \left( \frac{t - u^+_k}{v^+_k - u^+_k} \right) \left(  f ( v^+_k ) - f(u^+_k) \right) , \text{ for } t \in ( u^+_k, v^+_k ),
\]
and $f(t) \leq \of (t)$ for all $t \in ( u^+_k, v^+_k )$. Hence, for $t \in ( u^+_k, v^+_k )$, 
\begin{align}
\label{eq:f+excess}
 f^{\mathrm{s}+} ( u^+_k + v^+_k - t ) - \of (  u^+_k + v^+_k - t ) & =   f (v^+_k) - f ( t)
 -  \left( \frac{v^+_k-t}{v^+_k - u^+_k} \right) \left(  f ( v^+_k ) - f(u^+_k) \right)  \nonumber\\
& = \of (t) - f(t) \geq 0 .
  \end{align}
An analogous argument 
	gives
	\begin{align}
\label{eq:f-excess}
 f^{\mathrm{s}-} ( u^-_k + v^-_k - t ) + \uf (  u^-_k + v^-_k - t ) = f(t) - \uf (t) \geq 0 , \text{ for } t \in  ( u^-_k, v^-_k ) .
  \end{align}

Since $f$ is absolutely continuous, the function $f^{\mathrm{s}+}$ is also  absolutely continuous.
Hence the derivative $\frac{\ud}{\ud t} f^{\mathrm{s}+} (t)$ exists for a.e.~$t \in [0,1]$. Since $f^{\mathrm{s}+}$ coincides with $f$ on $\cE^+$,
we have $\frac{\ud}{\ud t} f^{\mathrm{s}+} (t) = f' ( t)$ for a.e.~$t \in \cE^+$.
On the other hand, for $t \in \cF^+$, we have
$\frac{\ud}{\ud t} f^{\mathrm{s}+} (t) = f' (u^+_k +v^+_k - t)$
for a.e.~$t \in (u^+_k, v^+_k)$, and so 
\[ \Gamma_{u^+_k, v^+_k} ( f^{\mathrm{s}+} ) = \int_{u^+_k}^{v^+_k} f' (u^+_k +v^+_k - t)^2 \ud t 
= \Gamma_{u^+_k, v^+_k} ( f ),\]
the last equality obtained using the substitution $s = u^+_k +v^+_k - t$. Hence $\Gamma_{0,1} ( f^{\mathrm{s}+} ) = \Gamma_{0,1} (f)$.
A similar argument shows that $\Gamma_{0,1} ( f^{\mathrm{s}-} ) = \Gamma_{0,1} (f)$, verifying~(iii).
Moreover, the fact that $\Gamma_{0,1} ( f^{\mathrm{s}+} )<\infty$ together with the relation~\eqref{eq:f+excess} implies~(i), and, similarly,
from~\eqref{eq:f-excess} we get~(ii).

Finally,  $f^{\mathrm{s}+} (t) = f(t)= \of (t)$ unless  $t \in ( u^+_k, v^+_k )$ for some $k \in I^+_\mathrm{f}$, where, from~\eqref{eq:f+excess},
\[ \int_{u^+_k}^{v^+_k}  \bigl( f^{\mathrm{s}+} (u) - \of (u) \bigr) \ud u =  \int_{u^+_k}^{v^+_k} \bigl( \of (u) - f(u) \bigr) \ud u ,\]
and then the first formula in~\eqref{eq:area-excess} follows by summing over~$k\in I^+_\mathrm{f}$. Similarly, from~\eqref{eq:f-excess},
\[ \int_{u^-_k}^{v^-_k}  \bigl( f^{\mathrm{s}-} (u) + \uf (u) \bigr) \ud u =  \int_{u^-_k}^{v^-_k} \bigl( f(u) - \uf (u)  \bigr) \ud u ,\]
from which we obtain  the second formula in~\eqref{eq:area-excess}. Since $f^{\mathrm{s}+}$ and $f^{\mathrm{s}-}$ are both in $\cBp$, from~\eqref{eq:positive-bridges} and~\eqref{eq:area-excess} we  conclude that, 
\[ A_{0,1} (  f^{\mathrm{s}+} ) + A_{0,1} (  f^{\mathrm{s}-} ) \geq \int_0^1 \bigl( f^{\mathrm{s}+} (u) + f^{\mathrm{s}-} (u) \bigr) \ud u = 2 \int_0^1 \bigl( \of (u) - \uf (u) \bigr) \ud u = 2 A_{0,1} (f), \]
which implies~(v).
\end{proof}

\begin{figure}[!ht]
\centering
 \begin{tikzpicture}[domain=0:1, scale=5]
\draw[white, line width = 0.2mm]   plot[smooth,domain=0.057:1,samples=100] ({\x},  {0.408-100*(1.057-\x)*(1-(1.057-\x))^2*((1.057-\x)-0.5)*(2-3*(1.057-\x))*(7*(1.057-\x)-1)/((2-(1.057-\x))*(2*(1.057-\x)+3))});
\draw[white, line width = 0.2mm]   plot[smooth,domain=0:0.2584,samples=100] ({\x},  {-0.555-100*(0.258-\x)*(1-(0.258-\x))^2*((0.258-\x)-0.5)*(2-3*(0.258-\x))*(7*(0.258-\x)-1)/((2-(0.258-\x))*(2*(0.258-\x)+3))});
\draw[white, line width = 0.2mm]   plot[smooth,domain=0.295:0.838,samples=100] ({\x},  {-0.342-0.583-100*(0.295+0.838-\x)*(1-(0.295+0.838-\x))^2*((0.295+0.838-\x)-0.5)*(2-3*(0.295+0.838-\x))*(7*(0.295+0.838-\x)-1)/((2-(0.295+0.838-\x))*(2*(0.295+0.838-\x)+3))});
\draw[white, line width = 0.2mm]   plot[smooth,domain=0.89:1,samples=100] ({\x},  {-0.277-100*(1.89-\x)*(1-(1.89-\x))^2*((1.89-\x)-0.5)*(2-3*(1.89-\x))*(7*(1.89-\x)-1)/((2-(1.89-\x))*(2*(1.89-\x)+3))});
\draw[red,line width = 0.20mm] (0.057,0.408) -- (1,0.0);
\draw[red,line width = 0.20mm] (0,0) -- (0.258,-0.555);
\draw[red,line width = 0.20mm] (0.295,-0.583) -- (0.838,-0.342);
\draw[red,line width = 0.20mm] (0.89,-0.277) -- (1,0);
\draw[black,line width = 0.30mm] (0,0) -- (0,-0.15);
\draw[black,line width = 0.30mm] (-0.15,0) -- (0,0);
\draw[black, line width = 0.25mm]   plot[smooth,domain=0:1,samples=100] ({\x},  {100*\x*(1-\x)^2*(\x-0.5)*(2-3*\x)*(7*\x-1)/((2-\x)*(2*\x+3))});
\draw[black,line width = 0.30mm,->] (0,0) -- (1.2,0);
\draw[black,line width = 0.30mm,->] (0,0) -- (0,0.7);
\draw[black,line width = 0.30mm,->] (0,0) -- (0,-0.7);
\end{tikzpicture}
\quad
 \begin{tikzpicture}[domain=0:1, scale=5]
\draw[blue, line width = 0.2mm]   plot[smooth,domain=0.057:1,samples=100] ({\x},  {0.408-100*(1.057-\x)*(1-(1.057-\x))^2*((1.057-\x)-0.5)*(2-3*(1.057-\x))*(7*(1.057-\x)-1)/((2-(1.057-\x))*(2*(1.057-\x)+3))});
\draw[blue, line width = 0.2mm]   plot[smooth,domain=0:0.2584,samples=100] ({\x},  {-0.555-100*(0.258-\x)*(1-(0.258-\x))^2*((0.258-\x)-0.5)*(2-3*(0.258-\x))*(7*(0.258-\x)-1)/((2-(0.258-\x))*(2*(0.258-\x)+3))});
\draw[blue, line width = 0.2mm]   plot[smooth,domain=0.295:0.838,samples=100] ({\x},  {-0.342-0.583-100*(0.295+0.838-\x)*(1-(0.295+0.838-\x))^2*((0.295+0.838-\x)-0.5)*(2-3*(0.295+0.838-\x))*(7*(0.295+0.838-\x)-1)/((2-(0.295+0.838-\x))*(2*(0.295+0.838-\x)+3))});
\draw[blue, line width = 0.2mm]   plot[smooth,domain=0.89:1,samples=100] ({\x},  {-0.277-100*(1.89-\x)*(1-(1.89-\x))^2*((1.89-\x)-0.5)*(2-3*(1.89-\x))*(7*(1.89-\x)-1)/((2-(1.89-\x))*(2*(1.89-\x)+3))});
\draw[red,line width = 0.20mm] (0.057,0.408) -- (1,0.0);
\draw[red,line width = 0.20mm] (0,0) -- (0.258,-0.555);
\draw[red,line width = 0.20mm] (0.295,-0.583) -- (0.838,-0.342);
\draw[red,line width = 0.20mm] (0.89,-0.277) -- (1,0);
\draw[black,line width = 0.30mm] (0,0) -- (0,-0.15);
\draw[black,line width = 0.30mm] (-0.15,0) -- (0,0);
\draw[black, line width = 0.25mm]   plot[smooth,domain=0:1,samples=100] ({\x},  {100*\x*(1-\x)^2*(\x-0.5)*(2-3*\x)*(7*\x-1)/((2-\x)*(2*\x+3))});
\draw[black,line width = 0.30mm,->] (0,0) -- (1.2,0);
\draw[black,line width = 0.30mm,->] (0,0) -- (0,0.7);
\draw[black,line width = 0.30mm,->] (0,0) -- (0,-0.7);
\end{tikzpicture}
\caption{Plot of an example $f \in \cB$ (black line), the faces of $\of, \uf$ corresponding to $\cF$ (red lines), and
the reflection construction of $f^{\mathrm{s}+}$ and $-f^{\mathrm{s}-}$ (blue lines).}
\label{fig:symmetrization}
\end{figure}

\begin{proof}[Proof of Proposition~\ref{prop:symmetrization}.]
Lemma~\ref{lem:symmetrized-functions} shows that we may take $\fs$ to be whichever one of $f^{\mathrm{s}+}$ or $f^{\mathrm{s}-}$ achieves the
maximum in $\max( A_{0,1} ( f^{\mathrm{s}+} ) ,  A_{0,1} ( f^{\mathrm{s}-} )  )$.
\end{proof}

Our final reduction is simple: we can replace a non-negative bridge by its concave majorant.

\begin{lemma}
\label{lem:convexify}
Suppose that $f \in \cBp$. Then $\of \in \cBp$ has $A_{0,1} ( \of ) = A_{0,1} ( f )$ and $\Gamma_{0,1} (\of) \leq \Gamma_{0,1} (f)$.
\end{lemma}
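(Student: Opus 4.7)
The plan is to verify the three assertions in turn: that $\of$ is a non-negative bridge, that its convex minorant degenerates to the zero function so that $A_{0,1}(\of)$ computes directly, and finally to use a face-by-face Cauchy--Schwarz argument to compare $\Gamma_{0,1}(\of)$ with $\Gamma_{0,1}(f)$.

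First, I check that $\of \in \cBp$. Since $f \in \cBp$ we have $f(0) = f(1) = 0$ and $f \geq 0$; the least concave majorant satisfies $\of(0) = 0$ and $\of(1) = f(1) = 0$ by definition, and $\of(u) \geq f(u) \geq 0$ for all $u \in [0,1]$. The bound $\of \leq \sup_{[0,1]} \of < \infty$ plus concavity yields absolute continuity on every compact subinterval of $(0,1)$, and the square integrability of $\of'$ on $[0,1]$ will follow from step three below. Thus $\of$ is absolutely continuous with $\of(0) = 0$ and $\of(1) = 0$, i.e., $\of \in \cBp$.

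Next, for the area identity, I note that since $\of \in \cBp$ is already concave with $\of(0) = \of(1) = 0$, its greatest convex minorant (call it $\underline{\of}$) is the zero function: any convex $g : [0,1] \to \R$ with $g(0) = g(1) = 0$ satisfies $g \leq 0$, while $\underline{\of} \leq \of$ forces $\underline{\of} \leq \of$, and $0 \leq \of$ confirms $\underline{\of} \equiv 0$ is admissible and maximal. Hence $A_{0,1}(\of) = \int_0^1 \of(u) \, \ud u$. By~\eqref{eq:positive-bridges} applied to $f \in \cBp$ we also have $A_{0,1}(f) = \int_0^1 \of(u) \, \ud u$, and so $A_{0,1}(\of) = A_{0,1}(f)$.

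Finally, for $\Gamma_{0,1}(\of) \leq \Gamma_{0,1}(f)$, I partition $[0,1]$ as $\cE^+ \cup \cF^+$ via Lemma~\ref{lem:faces} applied to $\of$; equivalently, $\cE^+ = \{u : \of(u) = f(u)\}$ and $\cF^+$ is a countable disjoint union of open intervals $(u_k^+, v_k^+)$ on each of which $\of$ is affine with slope $m_k := (f(v_k^+) - f(u_k^+))/(v_k^+ - u_k^+)$. Cauchy--Schwarz applied to $f(v_k^+) - f(u_k^+) = \int_{u_k^+}^{v_k^+} f'(u) \, \ud u$ gives
\begin{equation*}
m_k^2 (v_k^+ - u_k^+) = \frac{(f(v_k^+) - f(u_k^+))^2}{v_k^+ - u_k^+} \leq \int_{u_k^+}^{v_k^+} f'(u)^2 \, \ud u.
\end{equation*}
On $\cE^+$ the absolutely continuous functions $\of$ and $f$ coincide, so $\of - f \equiv 0$ on $\cE^+$; a standard result on derivatives of absolutely continuous functions (the derivative of a function vanishing on a measurable set is zero a.e.\ on that set) gives $\of'(u) = f'(u)$ for a.e.\ $u \in \cE^+$. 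Summing over the faces and adding the contribution from $\cE^+$ yields
\begin{equation*}
\Gamma_{0,1}(\of) = \int_{\cE^+} f'(u)^2 \, \ud u + \sum_{k \in I^+_\mathrm{f}} m_k^2 (v_k^+ - u_k^+) \leq \int_0^1 f'(u)^2 \, \ud u = \Gamma_{0,1}(f),
\end{equation*}
which in particular confirms $\of' \in L^2[0,1]$ (filling the small gap in step one). The main delicate point is the a.e.\ identity $\of' = f'$ on $\cE^+$, which needs the standard derivative-on-a-null-difference-set result; everything else is a direct Cauchy--Schwarz computation.
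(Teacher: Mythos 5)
Your proof is correct and follows essentially the same route as the paper: decompose $[0,1]$ into the extreme set and the face intervals of $\of$ via Lemma~\ref{lem:faces}, note the area is unchanged because the concave majorant is unchanged, and observe that on each face the affine segment minimizes $\Gamma_{a,b}$ among functions with the same endpoints (your Cauchy--Schwarz step is exactly the paper's orthogonal-decomposition identity for $\Gamma_{a,b}(h)$). You are somewhat more careful than the paper about the a.e.\ identity $\of'=f'$ on $\cE^+$ and about deducing absolute continuity of $\of$ from the finiteness of $\Gamma_{0,1}(\of)$, which is a welcome tightening rather than a deviation.
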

\begin{proof}
Suppose that $f \in \cBp$. 
It follows from Lemma~\ref{lem:faces}
that we can recover $\of$ from $f$ by replacing $f$ over every interval $[u^+_k, v^+_k]$ by 
the straight line segment from $(u^+_k, f(u^+_k))$ to $(v^+_k, f(v^+_k))$.
This preserves the value of the area enclosed by the concave majorant, but can not increase the value of $\Gamma_{0,1}$,
since for $h \in \cA$ and $0 \leq a \leq b \leq 1$, we have $\int_a^b h'(u) \ud u =  h(b)-h(a)$, and 
\[ \Gamma_{a,b} (h) = \int_a^b h' (u)^2 \ud u = \int_a^b \left( h' (u)  - \frac{h(b)-h(a)}{b-a}  \right)^2 \ud u + \frac{(h(b)-h(a))^2}{b-a},\]
which is minimized when $h(u) = h(a) + \left(\frac{u-a}{b-a}\right) ( h(b)-h(a) )$ is the straight line.
\end{proof}

\subsection{Identifying the optimal function}

The reductions in Section~\ref{sec:reductions} show that we can reduce the problem to that of non-negative
bridges that are
their own least concave majorant. The final step is the following.

\begin{lemma}
\label{lem:iso}
Suppose that $f \in \cB$.
Then 
\[ \int_0^1 f(u) \ud u \leq \frac{\sqrt{3}}{6} ,\]
with equality if and only if $f=\fopt$.
\end{lemma}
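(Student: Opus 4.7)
The plan is to prove the bound via one integration by parts followed by a single application of the Cauchy--Schwarz inequality, reading the lemma as carrying through the implicit normalization $\Gamma_{0,1}(f) \leq 1$ inherited from the reduction chain (Lemma~\ref{lem:reduction} put us at $x = \gamma = 1$, Proposition~\ref{prop:symmetrization} preserved $\Gamma_{0,1}$, and Lemma~\ref{lem:convexify} did not increase it).

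The first step is to exploit the bridge condition $f(0) = f(1) = 0$ by integrating by parts against the centred antiderivative $u \mapsto u - \tfrac{1}{2}$ of the constant function $1$. The boundary terms vanish at both endpoints, giving
\begin{equation*}
\int_0^1 f(u) \ud u = \Bigl[\bigl(u - \tfrac{1}{2}\bigr) f(u)\Bigr]_0^1 - \int_0^1 \bigl(u - \tfrac{1}{2}\bigr) f'(u) \ud u = \int_0^1 \bigl(\tfrac{1}{2} - u\bigr) f'(u) \ud u.
\end{equation*}
Choosing the centred antiderivative, rather than $u$ itself, is the one nontrivial choice: it is precisely the antiderivative of $1$ that is $L^2$-orthogonal to constants on $[0,1]$, which is what makes the Cauchy--Schwarz step below sharp against a quadratic target.

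The second step is Cauchy--Schwarz combined with the explicit computation $\int_0^1 (\tfrac{1}{2} - u)^2 \ud u = \tfrac{1}{12}$, giving
\begin{equation*}
\int_0^1 \bigl(\tfrac{1}{2} - u\bigr) f'(u) \ud u \leq \left(\int_0^1 \bigl(\tfrac{1}{2} - u\bigr)^2 \ud u\right)^{1/2} \sqrt{\Gamma_{0,1}(f)} \leq \frac{1}{2\sqrt{3}} = \frac{\sqrt{3}}{6},
\end{equation*}
which is the claimed inequality.

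For the equality statement, equality in Cauchy--Schwarz forces $f'(u) = c\bigl(\tfrac{1}{2} - u\bigr)$ a.e.~for some constant $c \in \R$, and equality in the final inequality forces $\Gamma_{0,1}(f) = 1$. Integrating with $f(0) = 0$ yields $f(u) = \tfrac{c}{2} u(1-u)$, and $c^2/12 = 1$ fixes $|c| = 2\sqrt{3}$; the sign $c = 2\sqrt{3}$ (the only one compatible with $\int_0^1 f \ud u = \sqrt{3}/6 > 0$) returns $f(u) = \sqrt{3}\, u(1-u) = \fopt(u)$, matching~\eqref{eq:f-star-formula} at $x = \gamma = 1$. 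I expect no serious obstacle: the entire content of the lemma is the recognition that $u - \tfrac{1}{2}$ is the correct test function, after which the verification is a two-line calculation.
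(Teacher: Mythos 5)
Your proof is correct and is in essence the same argument as the paper's: both reduce $\int_0^1 f(u)\ud u$ to the inner product $\int_0^1(\tfrac12-u)f'(u)\ud u$ and then apply Cauchy--Schwarz against the weight $\tfrac12-u$, under the normalization $\Gamma_{0,1}(f)\leq 1$ that both proofs carry implicitly (the paper's own proof keeps $\Gamma_{0,1}(f)=\gamma$ general and derives the bound $\sqrt{\gamma/12}$). The only difference is cosmetic: the paper splits at $u=\tfrac12$, uses Fubini and the single boundary condition $f(0)=0$ (resp.\ $f(1)=0$) on each half, and recombines with Jensen's inequality, whereas your single integration by parts against the centred antiderivative exploits both endpoint conditions at once and removes the Jensen step; your equality analysis matches the paper's and correctly pins down $f=\fopt$.
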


\begin{remark}
\label{rem:schmidt}
Lemma~\ref{lem:iso}
is closely related to results of Schmidt~\cite{schmidt}, although Schmidt's main results
are concerned with functions with changes of sign; we give a short self-contained proof here, and indicate where a key step
in our argument can be substituted by a secondary result from~\cite{schmidt}. See also~\cite{mpf} (particularly Chapters~IV \& XV) for a host of adjacent results.
\end{remark}

\begin{proof}[Proof of Lemma~\ref{lem:iso}.]
Suppose that $f \in \cB$.
Observe that
\begin{align*}
\int_0^{1/2} f(u) \ud u
& = \int_0^{1/2} \left( \int_0^{u} f' (y) \ud y \right) \ud u = \int_0^{1/2} \left( f'(y) \int_y^{1/2} \ud u \right) \ud y \\
& = \frac{1}{2} \int_0^{1/2} \left( 1 -2 y \right) f' (y) \ud y \\
&  \leq \frac{1}{2} \sqrt {\int_0^{1/2} (1 - 2 y)^2 \ud y \int_0^{1/2} f'(u)^2 \ud u } ,\end{align*}
where the last inequality is Cauchy--Schwarz,
and the condition for equality is that $f'(y)$ and $1 - 2 y$ are linearly dependent. Since $f(0)=0$, that is
\begin{equation}
\label{eq:half-1}
\int_0^{1/2} f(u) \ud u \leq \frac{1}{\sqrt{24}} \left( \Gamma_{0,1/2} (f) \right)^{1/2} , \end{equation}
with equality in~\eqref{eq:half-1} if and only if $f(x) = \theta_0 x (1-x)$ for all $0 \leq x \leq 1/2$ and some $\theta_0 \in \R$.
(The inequality~\eqref{eq:half-1} is the $\lambda =1/2$, $a=1$, $b=2$ case of~(20) in~\cite[p.~306]{schmidt}.)
The same argument with $g(u) = f(1-u)$ shows that
\begin{equation}
\label{eq:half-2}
\int_{1/2}^1 f(u) \ud u \leq \frac{1}{\sqrt{24}} \left( \Gamma_{1/2,1} (f) \right)^{1/2} , \end{equation}
with equality in~\eqref{eq:half-2} if and only if $f(x) = \theta_1 x (1-x)$ for all $1/2 \leq x \leq 1$ and some $\theta_1 \in \R$.
From~\eqref{eq:half-1} and~\eqref{eq:half-2} we obtain
\begin{align}
\label{eq:two-halves}
\int_0^1 f(u) \ud u & = \int_0^{1/2} f(u) \ud u + \int_{1/2}^1 f (u) \ud u \nonumber\\
& \leq \frac{1}{\sqrt{24}} \bigg\{ \sqrt{ \Gamma_{0,1/2} (f) } + \sqrt{ \Gamma_{1/2,1} (f) } \biggr\} .
\end{align}
Suppose that $\Gamma_{0,1} (f) = \gamma \in \RP$. 
Now, by Jensen's inequality, $\frac{\sqrt{\alpha} + \sqrt{\beta}}{2} \leq \sqrt{ \frac{\alpha+\beta}{2}}$, with equality
if and only if $\alpha = \beta$, so we have from~\eqref{eq:two-halves} that
\[ \int_0^1 f(u) \ud u  \leq  \sqrt{\frac{\gamma}{12}} ,\]
with equality if and only if both equality holds in the Cauchy--Schwarz and Jensen inequalities,
i.e., $\Gamma_{0,1/2} (f) = \Gamma_{1/2,1} (f) = \frac{\gamma}{2}$, $f (x) = \theta_0 x (1-x)$ for $x \in [0, \frac{1}{2}]$,
and $f (x) =\theta_1 x (1-x)$ for $x \in [\frac{1}{2},1]$.
Thus in the case of equality, we must have $\theta_0^2 = \theta_1^2 = 3 \gamma$, and $f(x) = \sqrt{3 \gamma} x (1-x)$,
which is $\fopt$ when $\gamma =1$.
\end{proof}

We can now complete the proof of Theorem~\ref{thm:planar-optimum}.

\begin{proof}[Proof of Theorem~\ref{thm:planar-optimum}.]
Lemma~\ref{lem:bridge}, Proposition~\ref{prop:symmetrization}, and Lemma~\ref{lem:convexify}
show that to every $f \in \cA$
one can associate $\tilde f \in \cBp \subset \cA$ that is non-negative and concave, and for which $A_{0,1} (\tilde f) \geq A_{0,1} (f)$ but $\Gamma_{0,1} (\tilde f) \leq \Gamma_{0,1} (f)$.
Hence the optimal $A_{0,1} (f)$ is attained by some $f \in \cBp$ that is non-negative and concave. For such an~$f$,
we have $\of = f$ and $\uf \equiv 0$, so $A_{0,1} (f)$ is simply the integral of~$f$; Lemma~\ref{lem:iso}
then shows that $\fopt$ is optimal.
\end{proof}

\appendix

\section{Proof of the LIL for the centre of mass}
\label{sec:com}

This section deals with the centre of mass process $G_n$ defined by~\eqref{eq:com-def}, and its associated
convex hull $\cG_n = \hull \{ G_0, G_1, \ldots, G_n \}$.
In particular, we prove our LIL in this context, Theorem~\ref{thm:com}, which in the planar case is the statement~\eqref{eq:centre-of-mass-area-lil},
and Proposition~\ref{prop:kappa-bound}, which gives a lower bound on the constant in~\eqref{eq:centre-of-mass-area-lil}.

\begin{proof}[Proof of Theorem~\ref{thm:com}.]
Observe that $f \mapsto \fg_f$ is a (Lipschitz) continuous functional since, for all $f_1, f_2 \in \cCdo$,
\[  \| \fg_{f_1} (t) - \fg_{f_2} (t) \| \leq \frac{1}{t} \int_0^t \| f_1 (s) - f_2 (s) \| \ud s , \text{ for } 0 < t \leq 1, \]
so $\rho_\infty ( \fg_{f_1} , \fg_{f_2} ) \leq \rho_\infty (f_1, f_2)$.
Hence the functional $f \mapsto \hull \fg_f [0,1]$ is continuous from $(\cCdo, \rho_\infty) \to (\fCdo, \rho_H)$.
Recall the definition of the interpolated trajectory~$Y_n$ from~\eqref{eq:Y-def}. 
Theorem~\ref{thm:strassen-drift-rw} implies that 
the sequence  $\psi_n ( \hull \fg_{Y_n} [0,1] )$ in $\fCdo$ is relatively compact, and   its set of limit points is~$\{ H( \fg_h) : h \in W_{d,\mu,\Sigma}\}$.
Every $h \in W_{d,\mu,\Sigma}$ is of the form $h = ( I_\mu , \Sigmap^{1/2} f)$ for $f \in U_{d-1}$,
and then $\fg_h = ( I_{\mu/2} , \Sigmap^{1/2} \fg_f )$. 
We can now argue similarly to in the proof of Theorem~\ref{thm:constants} above to get
\begin{equation}
\label{eq:com-interpolated-LIL}
 \limsup_{n \to \infty} \frac{V_d (\hull \fg_{Y_n} [0,1])}{\sqrt{2^{d-1} n^{d+1} ( \log \log n)^{d-1} }} = \tlambda_d \cdot \frac{\| \mu \|}{2} \cdot \sqrt{ \det \Sigmap } , \as, 
\end{equation}
with $\tlambda_d$ as defined at~\eqref{eq:tlambda-def}.
The LIL at~\eqref{eq:com-interpolated-LIL} does not yet yield the LIL in Theorem~\ref{thm:com},
because, unlike the analogous step in the proof of Theorem~\ref{thm:constants},
there is a discrepancy between $\hull \fg_{Y_n} [0,1]$ (for the interpolated path~$Y_n$)
and $\cG_n$. To quantify the error, write for $n \in \N$ and $t \in [0,1]$, 
\[ \Delta_n (t) := \fg_{Y_n} (t) - G_{\lfloor nt \rfloor} .\]
We claim that
\begin{equation}
\label{eq:error-bounded}
 \limsup_{n \to \infty} \sup_{t \in [0,1]} \| \Delta_n (t) \| < \infty ,\as \end{equation}

The bound~\eqref{eq:error-bounded} will allow us to bound the difference between $V_d  (\hull \fg_{Y_n} [0,1])$ and $V_d (\cG_n)$
using the Steiner formula~\eqref{eq:steiner}; to do so, we need bounds on the lower intrinsic volumes.
At this point we can apply  Theorem~\ref{thm:strassen-drift-rw}, and argue as in the proof of Theorem~\ref{thm:intrinsic-volume-lil} that,
similarly to~\eqref{eq:intrinsic-volume-lil-3}, for each $k \in \{1,2,\ldots, d\}$,
there is a constant $C < \infty$ for which
\begin{equation}
\label{eq:intrinsic-volume-upper-bound-com}
\limsup_{n \to \infty} \frac{V_k (\cG_n)}{\sqrt{n^{k+1} (\log \log n)^{k-1} }} \leq C, \as 
\end{equation}
(Note that in~\eqref{eq:intrinsic-volume-upper-bound-com} we are not claiming that the $\limsup$ is a.s.~constant, and we have not
claimed that the zero--one law in Theorem~\ref{thm:zero-one} applies.)
From~\eqref{eq:error-bounded}, \eqref{eq:intrinsic-volume-upper-bound-com}, and monotonicity of intrinsic volumes,
we obtain from the Steiner formula~\eqref{eq:steiner} that, for every $\eps >0$,
\begin{equation}
\label{eq:error-steiner}
 \lim_{n \to \infty} \frac{|  V_d  (\hull \fg_{Y_n} [0,1]) - V_d (\cG_n) |}{n^{(d/2)+\eps} } = 0 ,\as \end{equation}
Then~\eqref{eq:error-steiner} together with~\eqref{eq:com-interpolated-LIL} yields the LIL in Theorem~\ref{thm:com}.

It remains to verify the claim~\eqref{eq:error-bounded}.
For $t \in [0,1]$ and $n \in \N$, 
\[ \int_0^t Y_n (s) \ud s = \int_0^{\lfloor nt \rfloor/n} Y_n (s) \ud s + \int_{\lfloor nt \rfloor/n}^t Y_n (s) \ud s.\]
For simplicity of notation, write
 $k := \lfloor nt \rfloor$ and $\theta := nt - \lfloor nt \rfloor$,
so that $k \in \{0,1,\ldots, n\}$ and $0 \leq \theta <1$, and $t = \frac{k+\theta}{n}$. Then
\begin{align*}
\int_0^t Y_n (s) \ud s = \sum_{i=0}^{k-1} \int_{i/n}^{(i+1)/n} Y_n (s) \ud s + \int_{k/n}^{(k+\theta)/n} Y_n (s) \ud s 
 , \end{align*}
where, for any $\theta \in [0,1]$, by~\eqref{eq:Y-def},
\[  \int_{i/n}^{(i+\theta)/n} Y_n (s) \ud s = \int_{i/n}^{(i+\theta)/n} \left( S_i + (ns - i) Z_{i+1} \right) \ud s 
= \frac{\theta}{n} S_i + \frac{\theta^2}{2n} Z_{i+1}.\]
It follows that
\begin{align*}
\int_0^t Y_n (s) \ud s = \frac{1}{n} \sum_{i=0}^{k-1} S_i + \frac{1}{2n} S_k 
+ \frac{\theta}{n} S_k + \frac{\theta^2}{2n} Z_{k+1}
= \frac{k}{n} G_k + \frac{2\theta -1}{2n} S_k +  \frac{\theta^2}{2n} Z_{k+1} .\end{align*}
Hence,  for $k \geq 1$,
\[ \fg_{Y_n} (t) = \frac{1}{t} \int_0^t Y_n (s) \ud s 
= \frac{k}{k+\theta} \cdot \frac{n}{k} \int_0^t Y_n (s) \ud s 
= \frac{k}{k+\theta} \left[ G_k +  \frac{2\theta -1}{2k} S_k +  \frac{\theta^2}{2k} Z_{k+1} \right] .\]
Thus we obtain,  by the triangle inequality,
\[ \limsup_{n \to \infty} \sup_{0 \leq t \leq 1} \| \fg_{Y_n} (t) - G_{\lfloor n t \rfloor} \| \leq \| Z_1 \| + \sup_{k \in \N} \left[ 
\frac{\| G_k \|}{k} + \frac{\| S_{k}\|}{2k} + \frac{\| Z_{k+1} \|}{2k} \right]
,\]
which is a.s.~finite, since the strong law implies that $\| S_k \| / k \to \| \mu \|$ and $\| G_k \| / k \to \| \mu \|/2$
(see e.g.~Proposition~1.1 of~\cite{lw}). This verifies~\eqref{eq:error-bounded} and hence completes the proof.
\end{proof}

In the remainder of this section, we establish the following bound on the constant~$\vartheta$ from~\eqref{eq:centre-of-mass-area-lil}.

\begin{proposition}
\label{prop:kappa-bound}
It holds that $\vartheta \geq 0.090435$.
\end{proposition}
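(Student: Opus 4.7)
The plan is to prove the bound by exhibiting an explicit test function $f_\star \in U_1$ and lower-bounding $V_2(H(\fg_{f_\star}))$. First, applying Theorem~\ref{thm:com} with $d=2$ and $\Sigma = I$ (so $\det \Sigmap = 1$), and comparing the normalizations in Theorem~\ref{thm:com} with~\eqref{eq:centre-of-mass-area-lil}, I would identify $\vartheta = \tlambda_2/\sqrt{2}$, where $\tlambda_2 = \sup_{f \in U_1} V_2(H(\fg_f))$. Hence it suffices to exhibit a single $f \in U_1$ with $V_2(H(\fg_f))$ large enough; in fact I would aim at the cleaner bound $\tlambda_2 \geq \sqrt{6}/18$, which yields $\vartheta \geq \sqrt{3}/18 \approx 0.0962 > 0.090435$.

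Second, I would introduce a tractable linear surrogate for the hull area. The Fubini identity $\int_0^1 \fg_f(t)\,dt = -\int_0^1 f(s)\ln s\,ds$, combined with the observation that whenever $\fg_f(t) \geq t\,\fg_f(1)$ for all $t \in [0,1]$ the convex minorant of $\fg_f$ is exactly the chord from $(0,0)$ to $(1,\fg_f(1))$ while the concave majorant exceeds $\fg_f$, gives
\[
V_2(H(\fg_f)) \;\geq\; \int_0^1 \fg_f(t)\,dt - \tfrac{1}{2}\fg_f(1) \;=\; L(f), \qquad L(f) := -\int_0^1 f(s)\bigl(\ln s + \tfrac{1}{2}\bigr)\,ds.
\]

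Third, I would maximize $L$ over $U_1$ by Cauchy--Schwarz. Integration by parts (using $f(0)=0$) rewrites $L(f) = \int_0^1 f'(s) F(s)\,ds$ with $F(s) = \tfrac{1}{2} - \tfrac{s}{2} + s\ln s$, and Cauchy--Schwarz gives $L(f) \leq \|F\|_{L^2}$ for $f \in U_1$, attained when $f' = F/\|F\|_{L^2}$. Using the standard integrals $\int_0^1 s \ln s\,ds = -1/4$, $\int_0^1 s^2 \ln s\,ds = -1/9$, and $\int_0^1 s^2 (\ln s)^2\,ds = 2/27$, a direct computation yields $\|F\|_{L^2}^2 = 1/54$, so $\max L = \sqrt{6}/18$. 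The maximizer is the explicit function $f_\star(s) = \sqrt{54}\int_0^s F(u)\,du = (\sqrt{54}/2)\,s(1 - s + s\ln s)$, which lies in $U_1$ by construction.

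Finally, I would verify the chord inequality $\fg_{f_\star}(t) \geq t\,\fg_{f_\star}(1)$ assumed in the second step; this is the main technical point. A direct calculation gives
\[
\fg_{f_\star}(t) - t\,\fg_{f_\star}(1) \;=\; \frac{\sqrt{6}\,t}{6}\bigl[4(1-t) + 3t\ln t\bigr],
\]
and setting $\eta(t) := 4(1-t) + 3t\ln t$, one checks $\eta(1) = 0$ and $\eta'(t) = -1 + 3\ln t < 0$ on $(0,1)$, so $\eta$ is strictly decreasing and hence $\eta \geq 0$ on $[0,1]$. The hard part is precisely this non-trivial pointwise inequality: without it, the convex minorant need not be the straight chord, and one is forced into a more delicate analysis of both envelopes simultaneously. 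With the inequality established, one concludes $\tlambda_2 \geq V_2(H(\fg_{f_\star})) \geq L(f_\star) = \sqrt{6}/18$, hence $\vartheta \geq \sqrt{3}/18 > 0.090435$, as required.
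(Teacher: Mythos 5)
Your proof is correct, and it takes a genuinely different --- and in fact stronger --- route than the paper's. The paper obtains the bound by inserting a numerically optimized member of the parametric family~\eqref{eq:f-a-def} into $A(H(\fg_f))$ and evaluating the hull area numerically, getting $\tlambda_2 \geq 0.127894575$ and hence $\vartheta \geq 0.090435$; the choice of that family is motivated by a heuristic bridge-constrained problem which the paper explicitly concedes is not the correct variational problem. You instead replace the hull area by the linear surrogate $L(f) = \int_0^1 \bigl( \fg_f(t) - t\,\fg_f(1) \bigr) \ud t$ (the area above the chord), which you maximize \emph{exactly} over $U_1$ by Cauchy--Schwarz, arriving at the closed-form bound $\tlambda_2 \geq \sqrt{6}/18 \approx 0.1361$ and so $\vartheta \geq \sqrt{3}/18 \approx 0.0962$: fully rigorous, free of numerics, and strictly better than the paper's constant. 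Your computations check out ($\|F\|_{L^2}^2 = \tfrac{1}{12} - \tfrac{5}{36} + \tfrac{2}{27} = \tfrac{1}{54}$, the expression for $\fg_{f_\star}(t) - t\,\fg_{f_\star}(1)$, and the sign of $\eta$). One small remark: the chord inequality you flag as ``the hard part'' is not actually needed for the lower bound $V_2(H(\fg_f)) \geq L(f)$. The concave majorant always dominates $\fg_f$, and the greatest convex minorant, being convex with endpoint values $0$ and $\fg_f(1)$, always lies \emph{below} the chord $t \mapsto t\,\fg_f(1)$; hence $L(f)$ is an unconditional lower bound for the hull area, for every $f$. Verifying the inequality does no harm (it identifies the convex minorant exactly as the chord), but your argument would survive without it.
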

\begin{proof}
The $d=2$ case of Theorem~\ref{thm:com} says that, if $\mu \neq \0$,  
\[ \limsup_{n \to \infty} \frac{A (\cG_n)}{\sqrt{2 n^{3}  \log \log n }} = \tlambda_2 \cdot \frac{\| \mu \|}{2} \cdot \sqrt{ \det \Sigmap } ,
\]
where, by~\eqref{eq:tlambda-def}, $\tlambda_2 := \sup_{f \in U_{1}} A ( H ( \fg_f ) )$. Comparison with~\eqref{eq:centre-of-mass-area-lil}
shows that
\begin{equation}
\label{eq:kappa-def}
 \vartheta = \frac{1}{\sqrt{2}} \cdot  \sup_{f \in U_{1}} A ( H ( \fg_f ) ) . \end{equation}
The proof of Proposition~\ref{prop:kappa-bound} 
goes by exhibiting an $f \in U_1$ for which $A ( H ( \fg_f ) ) \geq 0.127894575$.
Define the one-parameter family $f_a : [0,1] \to \R$ ($a^2 \leq 27$) by
\begin{equation}
\label{eq:f-a-def}
 f_a (t) :=  a t^2 \log t - \left( \frac{2a}{3} + \frac{3}{2} \sqrt{ 1 - \frac{a^2}{27} } \right) t^2 + \left( \frac{2a}{3} + \sqrt{ 1 - \frac{a^2}{27} } \right) t .
\end{equation}
Then  the choice $a = 4.059781$ yields the stated bound (we explain below the origin of this choice of parametric family, and the numerical choice of~$a$).
Indeed, the concave majorant of $\fg_f$ for $f = f_a$ with this choice of $a$ coincides with $\fg_f$ over an
 interval $[0,t_0]$ and then follows the straight line to $(1,0)$;
numerical calculation gives $t_0 \approx 0.65213156$ and then $A ( H ( \fg_f ) )  \geq 0.127894575$, as claimed.
\end{proof}

We end this section with an heuristic explanation for how we went about looking
for the $f$ exhibited in the proof of Proposition~\ref{prop:kappa-bound}, as this may prove useful for
readers who seek to improve our bound. 
First note that some calculus yields
\[ \int_0^1 \fg_f (t) \ud t = \int_0^1 f(t) \log (1/t) \ud t = \fg_f (1) + \int_0^1 f'(u) u \log u \, \ud u .\]
While it is not at all clear that the reductions for the analogous problem presented in Section~\ref{sec:reductions}
are applicable in this case, following the reasoning of Lemma~\ref{lem:iso}
suggests trying to choose $f$ to
\begin{equation}
\label{eq:near-optimum-com} 
\text{maximize } \int_0^1 f'(u) u \log u \, \ud u , \text{ subject to } \int_0^1 f(u) \ud u =0 ,\,  \int_0^1 f(u)^2 \ud u \leq 1.\end{equation}
The $f$ which solves~\eqref{eq:near-optimum-com}  is of the form
$f(t) = a t^2 \log t + b t^2 + c t$,
and the two constraints in~\eqref{eq:near-optimum-com}
fix $b$ and $c$, to give the one-parameter family $f_a$ defined at~\eqref{eq:f-a-def}. 
The optimal $f_a$ for~\eqref{eq:near-optimum-com} has $a = 6 \sqrt{  3/7}$, which gives $\int_0^1 \fg_f (t) \ud t = \sqrt{21} /6 \approx 0.12729$. However, 
the $\fg_f$ that results from $f= f_a$ is non-concave, so in fact $A ( H ( \fg_f ) ) \approx 0.12781$. 
This means that~\eqref{eq:near-optimum-com} is not the correct optimization problem to evaluate $\vartheta$; but restricting ourselves to functions $f_a$ of the form given at~\eqref{eq:f-a-def},
we may nevertheless try to maximize $A ( H ( \fg_f ) )$. Rather than $a = 6 \sqrt{  3/7} \approx 3.927922$, numerical experimentation shows that a better choice is $a = 4.059781$,
as we used in the proof of Proposition~\ref{prop:kappa-bound}.

\section{Some extensions of Khoshnevisan's examples for the zero-drift case}
\label{sec:khoshnevisan}

If $Y \sim \cN ( 0 , \Sigma)$ for a symmetric, nonnegative-definite $d$-dimensional matrix~$\Sigma$ with symmetric square-root $\Sigma^{1/2}$,
then 
$u^\tra Y$ is univariate normal
with mean $0$ and variance $u^\tra \Sigma u$.
Since $\Sigma$ is symmetric, its largest eigenvalue is given by 
\begin{equation}
\label{eq:eigenvalue}
  \sigma^2_\star := \sup_{u \in \Sp{d-1}} ( u^\tra \Sigma u ) = \sup_{u \in \Sp{d-1}} \Var ( u^\tra Y ) = \sup_{u \in \Sp{d-1}} \| \Sigma^{1/2} u \|^2 =   \| \Sigma^{1/2} \|^2_{\rm op} = \| \Sigma \|_{\rm op} . \end{equation}
Then $\sigma_\star$ is the largest eigenvalue of $\Sigma^{1/2}$; let $e_\Sigma \in \Sp{d-1}$ denote a unit-length eigenvector of $\Sigma^{1/2}$ corresponding to $\sigma_\star$.

\begin{example}[Diameter]
\label{ex:diameter-zero-drift}
Suppose that $d \in \N$ and~\eqref{ass:moments} holds with $\mu = \0$.
Then
\begin{equation}
\label{eq:diam-lil}
 \limsup_{n \to \infty}  \frac{\diam \cH_n}{\sqrt{2  n \log \log n}} =  \sigma_\star, \as  \end{equation}
To prove this, observe that, from~\eqref{eq:diameter-inequality} applied to $f \in U_d$ with $M = \Sigma^{1/2}$, 
\begin{equation}
\label{eq:diam-upper}
\sup_{f \in U_d} \diam ( \Sigma^{1/2} f [0,1] ) \leq \sup_{f \in U_d}  \|  \Sigma^{1/2}  \|_{\rm op} = \sigma_\star   , \end{equation}
by~\eqref{eq:eigenvalue}.
On the other hand,
\[ \diam ( \Sigma^{1/2} f [0,1] )  \geq \left| e_\Sigma^\tra ( \Sigma^{1/2} (f(1) - f(0) ) ) \right|
= \sigma_\star \left| e_\Sigma^{\tra} f(1)  \right| .\]
Take $f(t) = e_\Sigma t$, so $\|  f' (u) \|^2 =1$ and $f \in U_d$,
 to see that
\begin{equation}
\label{eq:diam-lower}
 \sup_{f \in U_d} \diam ( \Sigma^{1/2} f [0,1] ) \geq \sigma_\star . \end{equation}
Combining~\eqref{eq:diam-upper} and~\eqref{eq:diam-lower},
we obtain~\eqref{eq:diam-lil} by an application of Theorem~\ref{thm:khoshnevisan}
with $F (A) = \diam A$, 
noting that the diameter functional is homogeneous of order~$\beta =1$.  
This extends Khoshnevisan's Example~4.1~\cite[pp.~387--388]{khoshnevisan}, which had $\Sigma = I$.
\end{example}

\begin{remark}
\label{rem:diameter-zero-drift}
The result~\eqref{eq:diam-lil} may be compared with
\begin{equation}
\label{eq:norm-lil}
 \limsup_{n \to \infty}  \frac{\| S_n \|}{\sqrt{2 n \log \log n}} =  \sigma_\star, \as,  \end{equation}
as can be obtained either from Strassen's theorem (Theorem~\ref{thm:strassen-rw}), or directly from the one-dimensional Hartman--Wintner LIL~\cite[p.~382]{ct}. Indeed,
the one-dimensional random walk $u^\tra S_n$ satisfies
\[  \limsup_{n \to \infty}  \frac{u^\tra S_n}{\sqrt{2  n \log \log n}} =  \sigma_u, \as ,\]
where $\sigma^2_u := u^\tra \Sigma u$. This holds simultaneously for all~$u$ in a countable dense subset of~$\Sp{d-1}$,
which is enough to conclude~\eqref{eq:norm-lil}, since~$\| x \| = \sup_{u \in \Sp{d-1}} ( u^\tra x)$ and~$\sup_{u \in \Sp{d-1}} ( u^\tra \Sigma u ) = \sigma_\star^2$.
\end{remark}

\begin{example}[Volume]
\label{ex:volume-zero-drift}
Suppose that $d \in \N$ and~\eqref{ass:moments} holds with $\mu = \0$.
Then, there exists a constant $v_d \in (0,\infty)$ such that
\begin{equation}
\label{eq:volume-zero-drift-lil}
 \limsup_{n \to \infty}  \frac{V_d ( \cH_n )}{( 2  n \log \log n )^{d/2}} =  ( \det \Sigma )^{d/2} v_d, \as  , \end{equation}
where $v_d := \sup_{f \in U_d} V_d ( \hull f[0,1] )$.
Khoshnevisan~\cite[p.~389]{khoshnevisan} established~\eqref{eq:volume-zero-drift-lil} in the case $\Sigma = I$ and
proved that $v_2 = 1/(2\pi)$. The case of general~$\Sigma$ follows from Theorem~\ref{thm:khoshnevisan} with $F = V_d$ (a functional
that is homogeneous of order~$\beta=d$)
in the same way as described in~\cite{khoshnevisan}, accounting for the linear transformation $\Sigma^{1/2}$.
\end{example}

\section*{Acknowledgements}
%\addcontentsline{toc}{section}{Acknowledgements}

We are grateful for the insightful and constructive comments received from an anonymous referee, which have led to improvements in the paper. 
Much early impetus for this work was provided by
stimulating discussions with 
James McRedmond and Vlad Vysotsky around 2019--20; in particular, James
identified the quadratic function~$\fopt$ as a likely candidate for the solution to the isoperimetric problem in Theorem~\ref{thm:planar-optimum},
and Vlad suggested a Strassen-type approach to shape theorems as in Corollary~\ref{cor:shape}.
We also thank Vlad for pointing out the link between our Theorem~\ref{thm:planar-optimum} and results in~\cite{av,vysotsky}.
The authors also express their thanks to the 
 Institute of Mathematical Stochastics at TU Dresden,
for hospitality in hosting a stimulating research visit in September 2022. This visit was supported by Alexander-von-Humboldt Foundation project No.~HRV 1151902 HFST-E. 
 The work of NS and S\v S was supported by Croatian Science Foundation grant 
 no.~2277.
The work of AW was supported by EPSRC grant EP/W00657X/1.


\begin{thebibliography}{99}
%\addcontentsline{toc}{section}{References}

\bibitem{av}
A.\ Akopyan and V.\ Vysotsky, 
Large deviations of convex hulls of planar random walks and Brownian motions. 
\emph{Ann.\ Henri Lebesgue} {\bf 4} (2021) 1163--1201.
	
\bibitem{bgm}
D.\ Bang, J.\ Gonz\'alez C\'azares, and A.\ Mijatovi\'c,
Asymptotic shape of the concave majorant of a L\'evy process. 
\emph{Ann.\ H.\ Lebesgue} {\bf 5} (2022) 779--811. 

\bibitem{bgg}
G.\ Bianchi, R.J.\ Gardner, and P.\ Gronchi,
Symmetrization in geometry.
\emph{Adv.\ Math.}\ {\bf 306} (2017) 51--88.

\bibitem{ct}
Y.S.\ Chow and H.\ Teicher,
\emph{Probability Theory}.
 3rd ed., Springer, New York, 1997.

\bibitem{chm} 
M.\ Cranston, P.\ Hsu, and P.\ March, 
Smoothness of the convex hull of planar Brownian motion.
\emph{Ann.\ Probab.}\ {\bf 17} (1989) 144--150.

\bibitem{css}
W.\ Cygan, N.\ Sandri\'c, and S.\ \v Sebek,
Convex hulls of stable random walks.
\emph{Electron.\ J.\ Probab.}\ {\bf 27} (2022), article no.~98, 30~pp.

\bibitem{ds}
J.-D.\ Deuschel and D.W.\ Stroock,
\emph{Large Deviations}.
Academic Press, San Diego, 1989.

\bibitem{einmahl}
U.\ Einmahl,
Strong invariance principles for partial sums of independent random vectors.
\emph{Ann.\ Probab.}\ {\bf 15} (1987) 1419--1440.

\bibitem{evans}
S.N.\ Evans,
\emph{Local Properties of Markov Families and Stochastic Processes Indexed by a Totally Disconnected Field.}
Ph.D.~Thesis, University of Cambridge, 1986.

\bibitem{grill}
K.\ Grill,
On the average of a random walk.
\emph{Statist.\ Probab.\ Letters} {\bf 6} (1988) 357--361.

\bibitem{gruber}
P.M.\ Gruber,
\emph{Convex and Discrete Geometry}.
Springer, Berlin, 2007.

\bibitem{kvz2} Z.\ Kabluchko, V.\ Vysotsky, and D.\ Zaporozhets, 
Convex hulls of random walks: Expected number of faces and face probabilities. 
\emph{Adv.\ Math.}\ {\bf 320} (2017) 595--629. 

\bibitem{klm} J.\ Kampf, G.\ Last, and I.\ Molchanov, 
On the convex hull of symmetric stable processes.
\emph{Proc.\ Amer.\ Math.\ Soc.}\ {\bf 140} (2012) 2527--2535.

\bibitem{khoshnevisan}
D.\ Khoshnevisan,
Local asymptotic laws for the Brownian convex hull.
\emph{Probab.\ Theory Relat.\ Fields} {\bf 93} (1992) 377--392.

\bibitem{kl}
J.\ Kuelbs and M.\ Ledoux,
On convex limit sets and Brownian motion.
\emph{J.\ Theoretical Probab.}\ {\bf 11} (1998) 461--492.

\bibitem{lt}
M.\ Ledoux and M.\ Talagrand,
\emph{Probability in Banach Spaces: Isoperimetry and Processes.}
Classics in Mathematics, Springer, New York, 2011.

\bibitem{levy}
P.\ L\'evy, 
Le caract\`ere universel de la courbe du mouvement brownien et la loi du logarithme it\'er\'e. 
\emph{Rend.\ Circ.\ Mat.\ Palermo} {\bf 4} (1955) 337--366.

\bibitem{lmw}
C.H.\ Lo, J.\ McRedmond, and C.\ Wallace, 
Functional limit theorems for random walks.
Preprint (2018) \href{https://arxiv.org/abs/1810.06275}{ArXiv:\,1810.06275}.

\bibitem{lw}
C.H.\ Lo and A.R.\ Wade,
On the centre of mass of a random walk.
\emph{Stochastic Process.\ Appl.}\ {\bf 129} (2019) 4663--4686.

\bibitem{lhw}
A.\ L\'opez Hern\'andez and A.R.\ Wade,
Angular asymptotics for random walks.  
\emph{A Lifetime of Excursions Through Random Walks and L\'evy Processes: A Volume in Honour of Ron Doney's 80th Birthday}, pp.~315--342, 
Springer,  2021. 

\bibitem{lmnpt}
M.\ Lotz, M.B.\ McCoy, I.\ Nourdin, G.\ Peccati, and J.A.\ Tropp, 
Concentration of the intrinsic volumes of a convex body.
\emph{Geometric Aspects of Functional Analysis. Vol. II}, pp.~139--167, Springer,
2020.
 
\bibitem{lk}
M.\ Lotz and J.A.\ Tropp,
Sharp phase transitions in Euclidean integral geometry.
Preprint (2022) \href{https://arxiv.org/abs/2208.13919}{ArXiv:\,2208.13919}.

\bibitem{majumdar} 
S.N.\ Majumdar, A.\ Comtet, and J.\ Randon-Furling, 
Random convex hulls and extreme value statistics.
\emph{J.\ Stat.\ Phys.}\ {\bf 138} (2010) 955--1009. 

\bibitem{mcr}
J.F.W.\ McRedmond,
\emph{Convex Hulls of Random Walks}.
Ph.D.~Thesis, Durham University, 2019. \href{http://etheses.dur.ac.uk/13281/}{http://etheses.dur.ac.uk/13281/}.

\bibitem{mcrw}
J.\ McRedmond and A.R.\ Wade,
The convex hull of a planar random walk: perimeter, diameter, and shape. 
\emph{Electron.\ J.\ Probab.}\ {\bf 23} (2018) paper no.~131, 24~pp.

\bibitem{melzak}
Z.A.\ Melzak, 
The isoperimetric problem of the convex hull of a closed space curve. 
\emph{Proc.\ Amer.\ Math.\ Soc.}\ {\bf 11} (1960) 265--274.

\bibitem{mpf}
D.S.\ Mitrinovi\'c, J.\ Pe\v cari\'c, and A.M.\ Fink,
\emph{Inequalities Involving Functions and Their Integrals and Derivatives}.
Springer, Netherlands, 1991.

\bibitem{moran}
P.A.P.\ Moran,
On a problem of S.\ Ulam.
\emph{J.\ London Math.\ Soc.}\ {\bf 21} (1946) 175--179.

\bibitem{philipp}
W.\ Philipp, 
Almost sure invariance principles for sums of $B$-valued random variables. 
\emph{Probability in Banach spaces, II}, pp.~171--193,
Lecture Notes in Math., 709, Springer, Berlin, 1979.

\bibitem{ry} 
D.\ Revuz and M.\ Yor,
\emph{Continuous Martingales and Brownian Motion}.
3rd ed., Springer, Berlin, 2005.

\bibitem{schilling}
R.L.\ Schilling,
\emph{Brownian Motion}. 
3rd ed.,
De~Gruyter, Berlin/Boston, 2021.

\bibitem{schmidt}
E.\ Schmidt,
\"Uber die Ungleichung, welche die Integrale \"uber eine Potenz einer Funktion und \"uber eine andere Potenz ihrer Ableitung verbindet.
\emph{Math.\ Ann.}\ {\bf 117} (1940) 301--326.

\bibitem{schneider}
R.\ Schneider,
\emph{Convex Bodies: The Brunn--Minkowski Theory}. 2nd ed., Cambridge University Press, Cambridge, 2014.

\bibitem{schoenberg}
I.J.\ Schoenberg, 
An isoperimetric inequality for closed curves convex in even-dimensional Euclidean spaces. 
\emph{Acta Math.}\ {\bf 91} (1954) 143--164.

\bibitem{ss} 
T.L.\ Snyder and J.M.\ Steele, 
Convex hulls of random walks.
{\em Proc.\ Amer.\ Math.\ Soc.}\ {\bf 117} (1993) 1165--1173.

\bibitem{solynin}
A.Yu.\ Solynin,
Exercises on the theme of continuous symmetrization.
\emph{Comput.\ Methods Funct.\ Theory} {\bf 20} (2020) 465--509.

\bibitem{spitzerbook}
F.\ Spitzer, \emph{Principles of Random Walk}. 2nd ed., Springer, New York, 1976.

\bibitem{sw} 
F.\ Spitzer and H.\ Widom, 
The circumference of a convex polygon.
{\em Proc.\ Amer.\ Math.\ Soc.}\ {\bf 12} (1961) 506--509.

\bibitem{stout}
 W.F.\ Stout,
\emph{Almost Sure Convergence}.
Academic Press, New York, 
1974.

\bibitem{strassen}
V.\ Strassen,
An invariance principle for the law of the iterated logarithm.
\emph{Z.\ Wahrschein.\ verw.\ Gebiete} {\bf 3} (1964) 211--226.

\bibitem{stroock}
D.W.\ Stroock,
\emph{Probability Theory: An Analytic View}.
2nd ed., Cambridge University Press, Cambridge, 2011.

\bibitem{tilli}
P.\ Tilli, 
Isoperimetric inequalities for convex hulls and related questions.
\emph{Trans.\ Amer.\ Math.\ Soc.}\ {\bf 362} (2010) 4497--4509. 

\bibitem{vysotsky}
V.\ Vysotsky,
The isoperimetric problem for convex hulls and the large deviations rate functionals of random walks. 
Preprint (2023) \href{https://arxiv.org/abs/2306.12359}{ArXiv:\,2306.12359}.

\bibitem{wx1} 
A.R.\ Wade and C.\ Xu, 
Convex hulls of planar random walks with drift.
{\em Proc.\ Amer.\ Math.\ Soc.}\ {\bf 143} (2015) 433--445.

\bibitem{wx2} 
A.R.\ Wade and C.\ Xu, 
Convex hulls of random walks and their scaling limits.
 {\em Stochastic Process.\ Appl.}\ {\bf 125} (2015) 4300--4320.

 \bibitem{zalgaller}
 V.A.\ Zalgaller,
 Extremal problems concerning the convex hull of a space curve.
 \emph{St.\ Petersburg Math J.}\ {\bf 8} (1997) 369--379.

\end{thebibliography}
\end{document}